\def\NAT@def@citea{\def\@citea{\NAT@separator}}% Suppress spaces between citations using natbib.sty
\theoremstyle{plain}% Theorem-like structures provided by amsthm.sty
\newtheorem{theorem}{Theorem}[section]
\newtheorem{lemma}[theorem]{Lemma}
\newtheorem{corollary}[theorem]{Corollary}
\newtheorem{proposition}[theorem]{Proposition}
\theoremstyle{definition}
\newtheorem{definition}[theorem]{Definition}
\newtheorem{example}[theorem]{Example}
\theoremstyle{remark}
\newtheorem{remark}{Remark}
\theoremstyle{plain}
{\bf}{\it}
\theoremstyle{plain}
 {\bf}{\it}
\theoremstyle{plain}
 {\bf}{\it}
\theoremstyle{plain}
 {\bf}{\it}
\theoremstyle{plain}
 {\bf}{\it}
\theoremstyle{plain}
 {\bf}{\it}
\theoremstyle{plain}
 {\bf}{\it}
\theoremstyle{plain}
 {\bf}{\it}
\theoremstyle{plain}
 {\bf}{\it}
\theoremstyle{plain}
 {\bf}{\it}
\theoremstyle{plain}
 {\bf}{\it}
\theoremstyle{plain}
 {\bf}{\it}
\theoremstyle{plain}
 {\bf}{\it}
\theoremstyle{plain}
 {\bf}{\it}
\theoremstyle{plain}
 {\bf}{\it}
\theoremstyle{plain}
 {\bf}{\it}
\theoremstyle{plain}
 {\bf}{\it}
\newcommand{\al}{\alpha}
\newcommand{\be}{\beta}
\newcommand{\la}{\lambda}
\newcommand{\de}{\delta}
\newcommand{\eps}{\varepsilon}
\newcommand{\bx}{\bar x}
\newcommand{\iv}{^{-1} }
\newcommand {\R} {\mathbb R}
\newcommand {\N} {\mathbb N}
\newcommand {\B} {\mathbb B}
\newcommand {\dom} {{\rm dom}\,}
\newcommand {\epi} {{\rm epi}\,}
\newcommand {\sd} {\partial}
\newcommand{\folgt}{$ \Rightarrow\ $}
\def\nbh{neighbourhood}
\def\es{\emptyset}
\def\LHS{left-hand side}
\def\RHS{right-hand side}
\def\EVP{Ekeland variational principle}
\def\Fr{Fr\'echet}
\newcommand{\norm}[1]{\left\Vert#1\right\Vert}
\newcommand{\ang}[1]{\left\langle #1 \right\rangle}
\newcommand{\qdtx}[1]{\quad\mbox{#1}\quad}
\newcommand{\AND}{\quad\mbox{and}\quad}
\newcounter{mycount}
\newcommand{\AK}[1]{\todo[inline]{AK {#1}}}
\begin{document}
%\articletype{ARTICLE TEMPLATE}% Specify the article type or omit as appropriate

\title{Necessary Conditions for Non-Intersection of Collections of Sets}

\author{
\name{Hoa T. Bui\textsuperscript{a,b} and Alexander Y. Kruger\textsuperscript{b}}
\thanks{CONTACT Alexander Y. Kruger. Email: a.kruger@federation.edu.au}
\affil{\textsuperscript{a} School of Electrical Engineering, Computing, and Mathematical Sciences, Curtin University,
Perth, WA, 6102, Australia; \textsuperscript{b} Centre for Informatics and Applied Optimization, Engineering, Information Technology and Physical Sciences,
Federation University, POB 663, Ballarat, Vic, 3350, Australia}
\vspace{5mm}
{Dedicated to Alfredo Iusem on his 70th birthday}
}

\maketitle

\begin{abstract}
This paper continues studies of non-intersection properties of finite collections of sets initiated 40 years ago by the \emph{extremal principle}.
We study elementary \emph{non-inter\-section} properties of collections of sets, making the core of the conventional definitions of \emph{extremality} and \emph{stationarity}.
In the setting of general Banach/Asplund spaces, we establish new primal (slope) and dual (generalized separation) necessary conditions for these non-inter\-section properties.
The results are applied to convergence analysis of alternating projections.
\end{abstract}

\begin{keywords}
extremal principle; extremality; stationarity; transversality; regularity; separation; slope; normal cone; subdifferential; alternating projections
\end{keywords}

\begin{amscode}
49J52; 49J53; 49K40; 90C30
\end{amscode}

%\setcounter{tocdepth}{2}
%\tableofcontents

\section{Introduction}\label{Int}

This paper continues studies of geometric non-intersection properties of finite collections of sets initiated by the \emph{extremal principle} \cite{KruMor79,KruMor80.2,KruMor80}.
Models involving collections of sets have proved their usefulness in analysis and optimization, with non-intersection properties (or their absence) being at the core of many applications: recall the ubiquitous convex \emph{separation theorem, Dubovitskii--Milyutin formalism} \cite{DubMil65} and various \emph{transversality/regularity} properties \cite{Kru05,Kru06,Kru09,DruIofLew15,Iof17,Iof17.2, KruLukTha17,Kru18,KruLukTha18,BivKraRib20,ThaBuiCuoVer20}.

The classical separation theorem states that two convex sets
%possessing a certain extremal property, namely:
such that
one of the sets does not meet the nonempty interior of the other set, can be separated by a hyperplane determined by a nonzero dual space vector.
Similarly, the extremal principle provides a dual space \emph{generalized separation} characterization of a certain extremal property of a pair of sets without assuming any set to be convex or have nonempty interior.
This extremal property (\emph{extremality}) provides a very general model that embraces many optimality notions.
Thus, the extremal principle can substitute the conventional separation theorem in the nonconvex settings, e.g., when proving necessary optimality conditions, subdifferential, normal cone and coderivative calculus rules \cite{KruMor79,KruMor80.2,KruMor80,Kru81.2,Kru85.1,BorJof98, Iof98,Kru03, BorZhu05, Mor06.1}.

The extremality assumption in the conventional extremal principle was successively relaxed to \emph{local extremality} \cite{Kru81.2}, \emph{stationarity} and \emph{approximate stationarity} \cite{Kru98,Kru02,Kru04,Kru09}, while preserving the generalized separation conclusion, and without significant changes in the original proof.
We refer the readers to \cite[Section~2.6]{Mor06.1} and \cite{BuiKru18} for historical comments.

Below we recall the conventional definitions of the extremality/stationarity properties of a collection of $n\ge2$ arbitrary subsets $\Omega_1,\ldots,\Omega_n$ of a normed vector space, having a common point.
We write $\{\Omega_1,\ldots,\Omega_n\}$ to denote the collection of sets as a single object.

\begin{definition}\label{D8}
Let $\Omega_1,\ldots,\Omega_n$ be subsets of a normed vector space $X$ and ${\bx\in\cap_{i=1}^n\Omega_i}$.
The collection $\{\Omega_1,\ldots,\Omega_n\}$ is
\begin{enumerate}
\item
\textbf{extremal} if
for any $\eps>0$, there exist vectors
$a_i\in{X}$ $(i=1,\ldots,n)$ satisfying
\begin{gather}\label{D1-1}
\bigcap_{i=1}^n(\Omega_i-a_i)=\emptyset,
\\\label{D1-2}
\max_{1\le i\le n}\norm{a_i}<\eps;
\end{gather}

\item
\textbf{locally extremal} at $\bx$ if there exists a number $\rho\in]0,+\infty]$ such that,
for any $\eps>0$, there are vectors
$a_i\in{X}$ $(i=1,\ldots,n)$ satisfying \eqref{D1-2} and
\begin{gather}\label{D1-3}
\bigcap_{i=1}^n(\Omega_i-a_i)\cap{B}_\rho(\bar{x})
=\emptyset;
\end{gather}

\item
\textbf{stationary} at $\bx$ if
for any $\eps>0$,
there exist a number $\rho\in]0,\eps[$
and vectors
$a_i\in{X}$ $(i=1,\ldots,n)$ satisfying \eqref{D1-3} and
\begin{gather}\label{D1-4}
\max_{1\le i\le n}\norm{a_i}<\eps\rho;
\end{gather}

\item
\textbf{approximately stationary} at $\bx$ if
for any $\eps>0$,
there exist a number $\rho\in]0,\eps[$, points $\omega_i\in\Omega_i\cap \B_\eps(\bx)$
and vectors
$a_i\in{X}$ $(i=1,\ldots,n)$ satisfying \eqref{D1-4} and
\begin{gather}\label{D1-5}
\bigcap_{i=1}^n(\Omega_i-\omega_i-a_i)\cap(\rho\B)
=\emptyset.
\end{gather}
\end{enumerate}
\end{definition}

The relationships between the properties in Definition~\ref{D8} are straightforward:
{\rm (i)~$\Rightarrow$~(ii) \folgt (iii) \folgt (iv)}.
If the sets are convex, then all four properties are equivalent \cite[Proposition~14]{Kru05}.
Note that the definition of local extremality in item (ii) allows for the value $\rho=+\infty$ and, thus, covers the (global) extremality property in item (i).

The approximate stationarity, being the weakest of the four mentioned primal space properties, is in fact equivalent to the dual generalized separation in the conclusion of the conventional extremal principle in Asplund spaces.
This result is known as the \emph{extended extremal principle} \cite{Kru98,Kru02,Kru03}.

\begin{theorem}[Extended extremal principle]\label{T11}
Let $\Omega_1,\ldots,\Omega_n$ be closed subsets of an Asplund space $X$ and
${\bx\in\cap_{i=1}^n\Omega_i}$.
The following conditions are equivalent:
\sloppy
\begin{enumerate}
\item
the collection $\{\Omega_1,\ldots,\Omega_n\}$ is approximately stationary at $\bx$;
\item
for any $\eps>0$, there exist points
$\omega_i\in\Omega_i\cap{B}_\eps(\bx)$ and vectors $x^*_i\in N^F_{\Omega_i}(\omega_i)$ ${(i=1,\ldots,n)}$ such that
$\norm{\sum_{i=1}^n x^*_i}<\eps$ and
$\sum_{i=1}^{n}\norm{x_i^*}=1$;

\item
for any $\eps>0$, there exist points
$\omega_i\in\Omega_i\cap{B}_\eps(\bx)$ and vectors $x_i^*\in X^*$ ${(i=1,\ldots,n)}$ such that
$\sum_{i=1}^nd(x^*_i,N^F_{\Omega_i}(\omega_i))<\eps$,
$\sum_{i=1}^n x^*_i=0$ and
$\sum_{i=1}^{n}\norm{x_i^*}=1$.
\end{enumerate}
\end{theorem}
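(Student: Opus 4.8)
The plan is to establish the cycle of implications (i)$\Rightarrow$(ii)$\Rightarrow$(iii)$\Rightarrow$(i), with the genuinely substantive part being (i)$\Rightarrow$(ii); the other two are largely formal.

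The implication (iii)$\Rightarrow$(ii) is the easiest: given $x_i^*$ with $\sum_i x_i^*=0$, $\sum_i\|x_i^*\|=1$ and $\sum_i d(x_i^*,N^F_{\Omega_i}(\omega_i))$ small, I would pick $u_i^*\in N^F_{\Omega_i}(\omega_i)$ nearly attaining each distance, then the $u_i^*$ satisfy $\|\sum_i u_i^*\|=\|\sum_i(u_i^*-x_i^*)\|\le\sum_i d(x_i^*,N^F_{\Omega_i}(\omega_i))$ which is small, while $\sum_i\|u_i^*\|$ is close to $1$; a harmless rescaling (dividing each $u_i^*$ by $\sum_i\|u_i^*\|$, legitimate since normal cones are cones) delivers exactly the normalization in (ii), at the cost of enlarging the constants by a controlled factor, which is absorbed by shrinking $\eps$. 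For (ii)$\Rightarrow$(iii), the reverse rescaling trick works: from $x_i^*\in N^F_{\Omega_i}(\omega_i)$ with $\|\sum_i x_i^*\|<\eps$ and $\sum_i\|x_i^*\|=1$, set $z^*:=\frac1n\sum_i x_i^*$ and replace $x_i^*$ by $x_i^*-z^*$; the new vectors sum to zero, each lies within $\|z^*\|<\eps/n$ of $N^F_{\Omega_i}(\omega_i)$, and the sum of norms stays near $1$; renormalizing and adjusting $\eps$ finishes it. Neither direction requires Asplundness.

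The core implication (i)$\Rightarrow$(ii) is where the Asplund structure and a variational-principle argument enter. Fix $\eps>0$. By approximate stationarity pick $\rho\in\,]0,\eps[$, points $\omega_i\in\Omega_i\cap\B_\eps(\bx)$ and vectors $a_i$ with $\max_i\|a_i\|<\eps\rho$ and $\bigcap_i(\Omega_i-\omega_i-a_i)\cap(\rho\B)=\es$. The standard device is to consider, on the product Asplund space $X^n$ (renormed so that the balls match up), the function $f(x_1,\dots,x_n):=\max_i\|x_i-x_1+\omega_i-\omega_1+a_i\|$ together with the indicator of $\prod_i(\Omega_i-\omega_i)$, or equivalently to run Ekeland's variational principle / a penalization argument for the distance to the (empty) intersection restricted to $\rho\B$. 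The emptiness of the intersection forces $f$ to be bounded below by a positive quantity on the feasible set of the ball, while $f$ is small ($<\eps\rho$, say) at the origin; Ekeland then yields a nearby approximate minimizer $(\bar\omega_1,\dots,\bar\omega_n)$ with $\bar\omega_i\in\Omega_i-\omega_i$, $\|\bar\omega_i\|$ small relative to $\rho$ (hence $\bar\omega_i+\omega_i\in\B_\eps(\bx)$ after adjusting $\eps$ from the start), and a perturbed function with a fuzzy subdifferential lower bound. Applying the fuzzy sum rule for \Fr{} subdifferentials in Asplund spaces to this sum, splitting the max-type term's subdifferential into a convex combination of unit functionals and the indicator terms' subdifferentials into $N^F_{\Omega_i}$, and collecting coordinates, I obtain $x_i^*\in N^F_{\Omega_i}(\omega_i')$ at points $\omega_i'$ near $\bx$ with $\sum_i x_i^*$ small and $\sum_i\|x_i^*\|$ bounded away from $0$; a final normalization gives $\sum_i\|x_i^*\|=1$ with $\|\sum_i x_i^*\|<\eps$.

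The main obstacle is the bookkeeping in the (i)$\Rightarrow$(ii) step: one must choose the Ekeland radius, the initial $\eps$, and the renorming of $X^n$ so that (a) the approximate minimizer's components stay inside $\B_\eps(\bx)$, (b) the exact nonemptiness gap $\rho$ does not get swallowed by the perturbation, and (c) the fuzzy sum rule's error terms are dominated by the prescribed $\eps$ in the conclusion — all simultaneously. The product-space norm must be set up so that the subdifferential of the max-type coupling term genuinely produces $n$ functionals whose norms sum to (approximately) $1$; getting the lower bound $\sum_i\|x_i^*\|\not\to0$ rather than merely $\sum_i x_i^*\approx0$ is the crux, and it is exactly what the max-structure (as opposed to a sum) buys us. I would most likely cite an existing fuzzy sum rule and a packaged version of this argument from \cite{Kru98,Kru02,Kru03} rather than reprove it, since the excerpt labels this the \emph{extended extremal principle} and attributes it there.
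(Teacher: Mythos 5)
There is a genuine gap: the theorem asserts an \emph{equivalence}, but your proposal never proves that either dual condition implies approximate stationarity. You announce the cycle (i)$\Rightarrow$(ii)$\Rightarrow$(iii)$\Rightarrow$(i), but what you then actually argue is (i)$\Rightarrow$(ii), (iii)$\Rightarrow$(ii) and (ii)$\Rightarrow$(iii); the last two only show (ii)$\Leftrightarrow$(iii), and the closing implication (iii)$\Rightarrow$(i) (equivalently (ii)$\Rightarrow$(i)) is missing. This implication is not a formal rescaling. One must, starting from points $\omega_i\in\Omega_i\cap B_\eps(\bx)$ and Fr\'echet normals $x_i^*\in N^F_{\Omega_i}(\omega_i)$ with $\sum_{i=1}^n\|x_i^*\|=1$ and $\|\sum_{i=1}^n x_i^*\|$ small, manufacture a radius $\rho$ and translation vectors $a_i$ with $\max_i\|a_i\|<\eps\rho$ for which $\bigcap_{i=1}^n(\Omega_i-\omega_i-a_i)\cap(\rho\B)=\emptyset$. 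The standard device is to choose approximate norming vectors $e_i$ with $\|e_i\|=1$ and $\langle x_i^*,e_i\rangle$ close to $\|x_i^*\|$, take $a_i:=te_i$ with $t$ a suitable fraction of $\eps\rho$, and show that any $x$ in the translated intersection would violate the defining inequality of the Fr\'echet normal cone, $\langle x_i^*,x+te_i\rangle\le\delta\|x+te_i\|$ for $x+te_i\in(\Omega_i-\omega_i)$ near $0$, once these are summed over $i$ and combined with $\sum_i\|x_i^*\|=1$ and $\|\sum_i x_i^*\|<\eps'$. Without this step the ``extended'' content of the theorem --- that the dual generalized separation \emph{characterizes} approximate stationarity rather than merely following from it --- is not established.

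For the parts you do address: the paper itself gives no proof of this theorem (it is quoted as known, with references), so the comparison is against the cited standard argument. Your (i)$\Rightarrow$(ii) sketch is exactly that argument (Ekeland's variational principle on a product space with a max-type coupling function, followed by the fuzzy sum rule), and it is the argument the paper later packages, for fixed $\eps$ and fixed translations, in its Theorems \ref{T12} and \ref{T17}; your two reductions between (ii) and (iii) are correct as written. So the proposal is sound where it goes, but as a proof of the stated equivalence it is incomplete.
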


The equivalent conditions (ii) and (iii) in Theorem~\ref{T11} have been used interchangeably (together
with several their modifications) since 1979 in the concluding part of the extremal principle and its extensions
\cite{KruMor79,KruMor80.2,KruMor80,Kru81.2,Kru85.1,MorSha96, Kru98,Kru00,Kru03,BorZhu05,Mor06.1}.
The necessity of conditions (ii) and (iii) for the approximate stationarity can be easily extended to general Banach spaces if \Fr\ normal cones are replaced by Clarke or $G$-normal cones; cf. \cite[Remark 2.1(iii)]{BuiKru19}.

The proof of the conventional extremal principle and all its subsequent extensions is based on the two fundamental results of variational analysis:
\begin{itemize}
\item
Ekeland variational principle,
\item
a sum rule for the appropriate subdifferential.
\end{itemize}

The exact opposite of the approximate stationarity happens to be another important property, currently called \emph{transversality}.
Along with other (weaker) transversality properties, it is frequently used in constraint qualifications, qualification conditions in subdifferential, normal cone and coderivative calculus, and convergence analysis of computational algorithms \cite{BauBor93,Mor06.1,LewLukMal09,DruIofLew15,KruLukTha18}.
In particular, under the transversality assumption on the sets, the alternating projections converge linearly to a point in the intersection.
The extended extremal principle automatically provides an equivalent dual characterization of transversality.
In its turn, transversality is closely related (in a sense equivalent) to the fundamental property of \emph{metric regularity} of set-valued mappings.
Many primal and dual characterizations of transversality properties have been established recently \cite{KruTha15,KruTha16,KruLukTha17,Kru18,KruLukTha18, BuiCuoKru20, ThaBuiCuoVer20}, mostly in the linear setting.
Studies of nonlinear versions of these properties have only started \cite{CuoKru,CuoKru20,CuoKru20.2,CuoKru6}.

The properties in Definition~\ref{D8} involve translations of all the sets determined by vectors $a_i\in{X}$ $(i=1,\ldots,n)$.
In all the properties it is sufficient to consider translations of all but one sets.
This simple observation leads to asymmetric conditions which can be useful, especially in the case $n=2$.
The conditions in the next statement correspond to setting $a_n:=0$ in the corresponding conditions in Definition~\ref{D8}.

\begin{proposition}\label{P10}
Let $\Omega_1,\ldots,\Omega_n$ be subsets of a normed vector space $X$ and ${\bx\in\cap_{i=1}^n\Omega_i}$.
The collection $\{\Omega_1,\ldots,\Omega_n\}$ is
\begin{enumerate}
\item
extremal if and only if,
for any $\eps>0$, there exist vectors
$a_i\in{X}$ $(i=1,\ldots,n-1)$ satisfying
\begin{gather}\label{P10-1}
\bigcap_{i=1}^{n-1}(\Omega_i-a_i)\cap\Omega_n=\emptyset,
\\\label{P10-2}
\max_{1\le i\le n-1}\norm{a_i}<\eps;
\end{gather}

\item
locally extremal at $\bx$ if and only if there exists a number $\rho\in]0,+\infty]$ such that,
for any $\eps>0$, there exist vectors
$a_i\in{X}$ $(i=1,\ldots,n-1)$ satisfying \eqref{P10-2} and
\begin{gather}\label{P10-3}
\bigcap_{i=1}^{n-1}(\Omega_i-a_i) \cap\Omega_n\cap{B}_\rho(\bar{x})=\emptyset;
\end{gather}
moreover, if $\{\Omega_1,\ldots,\Omega_n\}$ is locally extremal at $\bx$ with some $\rho\in]0,+\infty]$, then the above condition holds with any $\rho'\in]0,\rho[$ in place of $\rho$;
if $\rho=+\infty$, one can take $\rho':=+\infty$;

\item
stationary at $\bx$ if and only if,
for any $\eps>0$,
there exist a number $\rho\in]0,\eps[$
and vectors
$a_i\in{X}$ $(i=1,\ldots,n-1)$ satisfying \eqref{P10-3} and
\begin{gather}\label{P10-4}
\max_{1\le i\le n-1}\norm{a_i}<\eps\rho;
\end{gather}

\item
approximately stationary at $\bx$ if and only if,
for any $\eps>0$,
there exist a number $\rho\in]0,\eps[$, points $\omega_i\in\Omega_i\cap \B_\eps(\bx)$ $(i=1,\ldots,n)$
and vectors
$a_i\in{X}$ ${(i=1,\ldots,n-1)}$ satisfying \eqref{P10-4} and
\begin{gather}\label{P10-5}
\bigcap_{i=1}^{n-1}(\Omega_i-\omega_i-a_i) \cap(\Omega_n-\omega_n) \cap(\rho\B)
=\emptyset.
\end{gather}
\end{enumerate}
\end{proposition}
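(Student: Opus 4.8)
The plan is to derive each of the four equivalences from the corresponding item in Definition~\ref{D8} by exploiting the translation invariance of the non-intersection conditions. The key elementary observation is that for any vectors $a_1,\ldots,a_n\in X$ and any set $S\subseteq X$,
\begin{equation*}
\bigcap_{i=1}^n(\Omega_i-a_i)\cap S=\emptyset
\quad\Longleftrightarrow\quad
\bigcap_{i=1}^n(\Omega_i-(a_i-a_n))\cap(S+a_n)=\emptyset,
\end{equation*}
obtained by translating the whole intersection by $a_n$; in particular, replacing $a_i$ by $a_i':=a_i-a_n$ makes $a_n'=0$, so the $n$-th set is left untranslated while $\Omega_i-a_i'=\Omega_i-a_i+a_n$ for $i<n$. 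When $S$ is a ball $B_\rho(\bx)$ centred at the common point, the translated ball $B_\rho(\bx)+a_n=B_\rho(\bx+a_n)$ is only \emph{approximately} centred at $\bx$, which is why a small loss in the radius appears in item (ii) and why items (iii)--(iv) are stated "for any $\eps>0$" rather than for a fixed $\rho$.

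First I would prove (i): if $\{\Omega_1,\ldots,\Omega_n\}$ is extremal, fix $\eps>0$, take $a_i$ as in \eqref{D1-1}--\eqref{D1-2} with $\eps/2$ in place of $\eps$, and set $a_i':=a_i-a_n$ for $i=1,\ldots,n-1$; then \eqref{P10-1} holds (the intersection is a translate of the empty set) and $\|a_i'\|\le\|a_i\|+\|a_n\|<\eps$, giving \eqref{P10-2}. Conversely, given $a_i$ $(i<n)$ satisfying \eqref{P10-1}--\eqref{P10-2}, put $a_n:=0$ to recover \eqref{D1-1}--\eqref{D1-2}. Next, for (ii): in the "only if" direction, if local extremality holds with radius $\rho$ and $\rho'\in\,]0,\rho[$, choose $\eps>0$ small enough that $\eps<\rho-\rho'$ (when $\rho<+\infty$; the case $\rho=+\infty$ is trivial, take $\rho':=+\infty$), apply item (ii) of Definition~\ref{D8} with $\min\{\eps,(\rho-\rho')/2\}$, and translate by $a_n$: since $\|a_n\|<\rho-\rho'$, we have $B_{\rho'}(\bx)\subseteq B_\rho(\bx+a_n)=B_\rho(\bx)+a_n$, so intersecting the (still empty) translated set with the smaller ball $B_{\rho'}(\bx)$ keeps it empty, yielding \eqref{P10-3} with $\rho'$. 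The "if" direction again just sets $a_n:=0$. Items (iii) and (iv) follow the same template: in (iii) one additionally tracks the scale condition $\|a_i'\|\le 2\max_j\|a_j\|<2\eps\rho$, so starting from Definition~\ref{D8}(iii) applied with $\eps/2$ and absorbing the factor into the choice of $\eps$; in (iv) one carries along the points $\omega_i\in\Omega_i\cap B_\eps(\bx)$ unchanged and translates \eqref{D1-5} by $a_n$, noting that $\rho\B+a_n$ contains $\rho'\B$ for slightly smaller $\rho'$, or—more simply—absorbing the translation $a_n$ into a redefinition of $\omega_n$ (replacing $\omega_n$ by $\omega_n+a_n$, which stays in $\Omega_n$ only if... ), so one must instead keep $\omega_n$ fixed and shrink $\rho$.

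The main obstacle is bookkeeping rather than conceptual: in items (ii)--(iv) the ball in the non-intersection condition is centred at $\bx$, and translating by $a_n\ne 0$ shifts the centre to $\bx+a_n$, so one cannot get an \emph{exact} statement with the same $\rho$. The cleanest fix, which I expect the authors to use, is to observe that because $\|a_n\|$ is of the same (small) order as the other quantities, one may either (a) absorb the shift by replacing $\rho$ with a slightly smaller $\rho'$ and adjusting $\eps$, exploiting that all three of items (ii)--(iv) quantify over \emph{all} $\eps>0$, or (b) in item (ii), state the monotonicity in $\rho$ explicitly (as the proposition does: "the condition holds with any $\rho'\in\,]0,\rho[$"), which neatly sidesteps the centring issue. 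The $\rho=+\infty$ case must be handled separately but is immediate since then $B_\rho(\bx)=X$ is translation invariant. Care is also needed in (iii) with the product $\eps\rho$: halving $\eps$ at the start only changes the constant, and since the statement is "for any $\eps>0$", this is harmless.
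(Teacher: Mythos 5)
Your argument is correct and takes essentially the same route as the paper's own proof: the trivial direction sets $a_n:=0$, and the converse passes to $a_i':=a_i-a_n$ via the identity $\bigcap_{i=1}^{n-1}(\Omega_i-a_i')\cap\Omega_n=\bigcap_{i=1}^{n}(\Omega_i-a_i)+a_n$, applies each definition with a halved $\eps$, absorbs the shift of the ball's centre by shrinking the radius to $\rho'$ with $\rho'+\|a_n\|<\rho$, and reduces item (iv) to the preceding ones by working with the sets $\Omega_i-\omega_i$. The only quibble is a bookkeeping slip in item (ii): you should invoke Definition~\ref{D8}(ii) with $\min\{\eps/2,(\rho-\rho')/2\}$ rather than $\min\{\eps,(\rho-\rho')/2\}$, so that $\|a_i'\|\le\|a_i\|+\|a_n\|<\eps$, exactly as you already arranged in item (i).
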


\if{
\begin{proof}
The properties above imply the corresponding ones in Definition~\ref{D8} with $a_n=0$.
For the opposite implication, given vectors $a_i\in{X}$ $(i=1,\ldots,n)$, it is natural to consider vectors $a'_i:=a_i-a_n$ $(i=1,\ldots,n-1)$.
Then
\begin{gather}\label{P10P1}
\bigcap_{i=1}^{n-1}(\Omega_i-a'_i)\cap\Omega_n =\bigcap_{i=1}^{n}(\Omega_i-a_i)+a_n.
\end{gather}
Hence, condition \eqref{D1-1} implies condition \eqref{P10-1} with the collection of vectors $a'_i$'s in place of $a_i$'s.
Given an $\eps>0$, if one of the conditions \eqref{D1-2} or \eqref{D1-4} is satisfied with some $\eps'\in]0,\eps/2]$ in place of $\eps$, then the corresponding condition \eqref{P10-2} or \eqref{P10-4} is satisfied with the collection $a'_i$'s in place of $a_i$'s.
Given a $\rho\in]0,+\infty]$ and a $\rho'\in]0,\rho[$, one can take a smaller $\eps'>0$ to ensure that $\rho'+\eps'<\rho$.
Then, in view of \eqref{P10P1} and assuming \eqref{D1-2} with $\eps'$ in place of $\eps$, we have
\begin{gather*}%\label{P10-3}
\bigcap_{i=1}^{n-1}(\Omega_i-a'_i) \cap\Omega_n\cap{B}_{\rho'}(\bar{x})\subset \bigcap_{i=1}^{n}(\Omega_i-a_i)\cap{B}_{\rho}(\bar{x}).
\end{gather*}
The above inclusion obviously holds also with $\rho'=\rho=+\infty$.
Thus, condition \eqref{D1-3} implies condition \eqref{P10-3} with $a'_i$'s and $\rho'$ in place of $a_i$'s and $\rho$, respectively.
Observe that conditions \eqref{D1-5} and \eqref{P10-5} are actually conditions \eqref{D1-3} and \eqref{P10-3}, respectively, with the sets $\Omega_i-\omega_i$ ${(i=1,\ldots,n)}$ in place of $\Omega_i$ $(i=1,\ldots,n)$.
Hence, condition \eqref{D1-5} implies condition \eqref{P10-5} with $a'_i$'s and $\rho'$ in place of $a_i$'s and $\rho$, respectively.
It follows that
the properties in Definition~\ref{D8} imply the corresponding ones in the proposition with $a'_i$'s in place of $a_i$'s, as well as the `moreover' part in item (ii).
\end{proof}
}\fi

The definitions of the extremality/stationarity properties as well as their equivalent asymmetric characterizations in Proposition~\ref{P10} involve non-intersection conditions \eqref{D1-1}, \eqref{D1-3}, \eqref{D1-5}, \eqref{P10-1}, \eqref{P10-3} and \eqref{P10-5} for certain `small' (in the sense of \eqref{D1-2}, \eqref{D1-4}, \eqref{P10-2} or \eqref{P10-4}) translations determined by $n$ or $n-1$ vectors $a_i\in{X}$ of either the original sets $\Omega_1,\ldots,\Omega_n$ in the case of \eqref{D1-1}, \eqref{D1-3}, \eqref{P10-1} and \eqref{P10-3}, or the sets $\Omega_1-\omega_1,\ldots,\Omega_n-\omega_n$ in the case of \eqref{D1-5} and \eqref{P10-5}.

The mentioned six non-intersection conditions are in fact variations of each other, and can all be considered as variations of a single condition, e.g., condition \eqref{P10-1}.
\if{
Indeed, conditions \eqref{P10-1}, \eqref{P10-3} and \eqref{P10-5} are particular cases of conditions \eqref{D1-1}, \eqref{D1-3} and \eqref{D1-5}, respectively, with $a_n=0$.
Conditions \eqref{D1-1} and \eqref{P10-1} are particular cases of conditions \eqref{D1-3} and \eqref{P10-3}, respectively, with $\rho=+\infty$, while conditions \eqref{D1-3} and \eqref{P10-3}, in their turn, are particular cases of conditions \eqref{D1-5} and \eqref{P10-5}, respectively, with $\omega_1=\ldots=\omega_n=\bx$.
Moreover, condition \eqref{P10-3} is a particular case of condition \eqref{P10-1} with $\Omega_n':=\Omega_n\cap{B}_\rho(\bar{x})$ in place of $\Omega_n$, while conditions \eqref{D1-5} and \eqref{P10-5} are particular cases of conditions \eqref{D1-3} and \eqref{P10-3}, respectively, with the sets $\Omega_1-\omega_1,\ldots,\Omega_n-\omega_n$ in place of $\Omega_1,\ldots,\Omega_n$ and $0$ in place of $\bx$.
}\fi
Conditions \eqref{P10-1}, \eqref{P10-3} and \eqref{P10-5} are obviously particular cases of conditions \eqref{D1-1}, \eqref{D1-3} and \eqref{D1-5}, respectively, with $a_n=0$.
The first three conditions are in a sense equivalent.
Indeed, condition \eqref{P10-1} is a particular case of condition \eqref{P10-3} with $\rho=+\infty$, while condition \eqref{P10-3}, in its turn, is a particular case of condition \eqref{P10-5} with $\omega_1=\ldots=\omega_n=\bx$.
Conversely, condition \eqref{P10-3} is a particular case of condition \eqref{P10-1} with $\Omega_n':=\Omega_n\cap{B}_\rho(\bar{x})$ in place of $\Omega_n$, while condition \eqref{P10-5} is a particular case of condition \eqref{P10-3} with the sets $\Omega_1-\omega_1,\ldots,\Omega_n-\omega_n$ in place of $\Omega_1,\ldots,\Omega_n$ and $0$ in place of $\bx$.
Now notice that the seemingly more general conditions \eqref{D1-1}, \eqref{D1-3} and \eqref{D1-5} reduce to a condition of the type \eqref{P10-1} for a collection of $n+1$ sets.
In the case of conditions \eqref{D1-1} and \eqref{D1-3}, it suffices to add to $\Omega_1,\ldots,\Omega_n$ the sets $X$ and $B_\rho(\bx)$, respectively, while condition \eqref{D1-5} is a particular case of condition \eqref{D1-3} with the sets $\Omega_1-\omega_1,\ldots,\Omega_n-\omega_n$ in place of $\Omega_1,\ldots,\Omega_n$ and $0$ in place of $\bx$.

Choosing, e.g., condition \eqref{P10-1}, the simplest of the three equivalent non-intersection conditions, together with the accompanying restriction \eqref{P10-2} on the size of translations, and introducing a convex continuous function
\begin{gather}\label{f0}
f(u_1,\ldots,u_{n}):=\max\limits_{1\le{i}\le{n}-1} \norm{u_i-a_i-u_n},
\quad
u_1,\ldots,u_n\in X,
\end{gather}
on $X^n$, one immediately obtains the estimates:
\begin{gather}\label{12}
f(u_1,\ldots,u_n)>0 \qdtx{for all} u_i\in\Omega_i \;(i=1,\ldots,n),
\\\label{13}
f(\bx,\ldots,\bx)=\max_{1\le i\le n-1}\norm{a_i} <\eps,
\end{gather}
which obviously entice one to apply the Ekeland variational principle.
Moreover, nice properties of the function \eqref{f0} together with condition \eqref{12} allow one to apply then an appropriate
subdifferential sum rule to establish dual conditions in terms of normals to the sets $\Omega_1,\ldots,\Omega_n$.
Arguments of this kind make the core part of the original (infinite dimensional) proof of the extremal principle and all its subsequent extensions including Theorem~\ref{T11}.

Note that conditions \eqref{P10-1} and \eqref{P10-2} and the function \eqref{f0} leading to the above arguments are formulated for fixed number $\eps>0$ and vectors $a_i\in X$, while all the properties in Definition~\ref{D8}, Proposition~\ref{P10} and Theorem~\ref{T11} (as well as in the conventional extremal principle) are formulated `for any $\eps>0$'.
This observation shows that the conventional (extended) extremal principle, although sufficient for most applications, does not use the full potential of the above arguments.

Recently, a few situations have been identified
where the conventional (extended) extremal principle fails, while the above arguments are still applicable.
They were successfully used, for instance, in \cite{KruLop12.1} to extend the extremal principle to infinite collections of sets.
For that purpose, a more universal `fixed $\eps$' type statement \cite[Theorem~3.1]{KruLop12.1} was formulated, with its proof encapsulating the above arguments, from which both the conventional extremal principle and its extension to infinite collections followed as immediate corollaries.
Another two lemmas of this kind were established by Zheng and Ng in \cite{ZheNg06}.
They have been further refined and strengthened in \cite{ZheNg11,ZheYanZou17} producing a `unified separation theorem', used in \cite{ZheNg11} to prove fuzzy multiplier rules in set-valued optimization problems.
The `fixed $\eps$' type statements from \cite{ZheNg06, ZheNg11,ZheYanZou17,KruLop12.1} have been compared and further unified in \cite{BuiKru18,BuiKru19}.
Such statements
open a way for characterizing other extremality-like properties.
%e.g., in the important for applications setting when all the sets lie in a subspace of $X$.

The function \eqref{f0} playing the key role in the above arguments is a composition of a linear mapping from $X^n$ to $X^{n-1}$ and the $l_{\infty}$ (maximum) norm on $\R^{n-1}$.
It is easy to see that any norm on $\R^{n-1}$ can be used as
the outer function ensuring the same estimates \eqref{12} and \eqref{13}; cf., e.g., \cite{ZheNg11}.
This would of course lead to appropriate straightforward changes in the dual conditions, e.g., the sum of norms in parts (ii) and (iii) of  Theorem~\ref{T11} would have to be replaced by the corresponding finite dimensional dual norm.

\if{
To quantify non-intersection properties of a collection of sets, one can use the following distance-like quantity (\emph{nonintersect index} \cite{ZheNg11}):
\begin{gather}\label{d}
d(\Omega_1,\ldots,\Omega_{n}) :=\inf_{u_i\in\Omega_i\;(i=1,\ldots,n)}\max_{1\le{i}\le{n}-1} \norm{u_n-u_i},
\end{gather}
or some other quantity of this kind; cf. \cite{BuiKru19}.
The maximum in \eqref{d} obviously corresponds to the $l_{\infty}$ (uniform) norm on $\R^{n-1}$.
It can be replaced by any other norm on $\R^{n-1}$; cf. \cite{ZheNg11}.
If $\cap_{i=1}^n\Omega_i\ne\es$, then
$d(\Omega_1-a_1,\ldots,\Omega_{n-1}-a_{n-1},\Omega_{n}) \le\max_{1\le i\le n-1}\norm{a_i}$.
}\fi

Now the reader should be ready to observe that in the above arguments, leading to the application of the \EVP, the function $f$ does not have to be of the form \eqref{f0} (with any finite-dimensional norm as the outer function).
It only needs to satisfy conditions \eqref{12} and \eqref{13}.
Of course, if one targets dual conditions, the function needs to be subdifferentiable (in some sense, with the subdifferential mapping possessing a natural sum rule).
This more general approach adds flexibility and expands applicability of the results.
It has been successfully used recently in \cite{CuoKru,CuoKru20,CuoKru20.2,CuoKru6} when studying transversality properties.
In this paper, we are applying it to establishing new `nonlinear' primal and dual necessary conditions for the elementary non-intersection properties in Definition~\ref{D8}.

The function $f$ can be rather general.
It does not even have to be continuous,
%(the \EVP\ only requires the function to be \lsc),
but we do not go that far in this paper.
We consider compositions of the conventional distance-type functions and continuous strictly increasing functions from $\R_+$ to $\R_+$.
The resulting estimates are further specified for the cases when the outer function is (continuously) differentiable, and when it is a power function (H\"older estimates).

The nonlinear estimates established in this paper can be automatically used to replace the elementary `fixed $\eps$' type statements from \cite{ZheNg06, ZheNg11,ZheYanZou17,KruLop12.1,BuiKru18,BuiKru19} when formulating necessary conditions for the extremality/stationarity of finite and infinite collections of sets and the corresponding optimality conditions like, e.g., those in \cite{KruLop12.2}, adding another degree of freedom.
We leave such illustrations for future publications.
Instead, we provide an application to convergence analysis of alternating projections, extending the $R-$linear convergence estimates from~\cite{DruIofLew15}.
We show that under certain weaker nonlinear conditions, the alternating projections still converge, though with a different rate.
The nonconsistent case is explored and finite convergence of the alternating projections is established.
%To the best of our knowledge, for the very first time, sufficient conditions for finite convergence of the alternating projections are established.

The structure of the paper is as follows.
Section~\ref{Pre} recalls some definitions and facts used throughout the paper, particularly,
the Ekeland variational principle and slope chain rule which are the core tools for proving necessary conditions, and three types of subdifferential sum rules used when deducing dual conditions.

In Section~\ref{Slope}, we establish slope necessary conditions for the elementary non-intersection properties of collections of sets contained in all parts of Definition~\ref{D8} and Proposition~\ref{P10}.
The conditions for the property \eqref{P10-1} in Theorem~\ref{T12} are immediate consequences of the \EVP\ and slope chain rule, with the other conditions being consequences of Theorem~\ref{T12}.
This type of conditions are rather common, e.g., in the closely related area of error bounds \cite{Aze03,NgaThe08,Kru15,AzeCor17,Iof17}, but with regards to the extremality/stationarity/transversality properties of collections of sets, their importance has been underestimated, with the estimates hidden in numerous proofs of dual conditions.
We believe that such conditions can be of importance by themselves, particularly because they are usually stronger and sometimes easier to check than the corresponding dual ones.
Besides, formulating such estimates as separate statements makes the proofs of the dual conditions simpler.

Section~\ref{dual} is devoted to dual necessary conditions.
The main conditions in Theorem~\ref{T17} are consequences of the primal ones in Theorem~\ref{T12} and the subdifferential sum rules, with the other dual conditions being immediate consequences of Theorem~\ref{T17}.
Among other things, nonlinear extensions of \cite[Theorem~3.1]{KruLop12.1} and \cite[Theorems~3.1 and 3.4]{ZheNg11} are obtained.
An additional condition relating primal and dual vectors is added to all dual statements.
The importance of such conditions (coming from the application of a subdifferential sum rule) was first observed and justified by Zheng and Ng \cite{ZheNg11} when formulating their unified separation theorem.
A similar condition was used in \cite{ChuKruYao11} when characterizing calmness of efficient solution maps in parametric vector optimization.

In Section~\ref{AP}, we consider an application of the dual necessary conditions in Section~\ref{dual} to convergence analysis of alternating projections.

\section{Preliminaries}\label{Pre}

Our basic notation is standard, see, e.g., \cite{RocWet98,Mor06.1,DonRoc14}.
Throughout the paper, $X$ is either a metric or, more often, a normed vector space.
The open unit ball in any space is denoted by $\B$, while $\B_\delta(x)$ and $\overline{\B}_\delta(x)$ stand, respectively, for the open and closed balls with center $x$ and radius $\delta>0$.
If not explicitly stated otherwise, products of normed vector spaces are assumed to be equipped with the maximum norm ${\|(x,y)\|:=\max\{\|x\|,\|y\|\}}$, ${(x,y)\in X\times Y}$.
$\R$ and $\R_+$ denote the real line (with the usual norm) and the set of all nonnegative real numbers.
The distance from a point $x$ to a set $\Omega$ is defined by $d(x,\Omega):=\inf_{u \in \Omega}\|u-x\|$, and we use the convention $d(x,\emptyset) = +\infty$.
For an extended-real-valued function $f:X\to\R\cup\{+\infty\}$ on a normed vector space $X$,
its domain is defined
by
$\dom f:=\{x \in X\mid f(x) < +\infty\}$

The key tool in the proof of the main slope necessary conditions is
the celebrated Ekeland variational principle; cf., e.g., \cite{Kru03,Mor06.1,DonRoc14}.

\begin{lemma}[Ekeland variational principle]\label{evp}
Suppose $X$ is a complete metric space, $f: X\to\R\cup\{+\infty\}$ is lower semicontinuous, $\bx\in X$, $\varepsilon>0$ and
$
f(\bar{x})<\inf_{X}f+\varepsilon.
$
Then, for any $\la>0$, there exists an $\hat{x}\in X$ such that
\begin{enumerate}
\item $d(\hat{x},\bx)<\lambda$;
\item $f(\hat{x})\le f(\bx)$;
\item
$f(x)+(\varepsilon/\lambda)d(x,\hat{x})>f(\hat{x})$ for all $x\in X\setminus\{\bx\}.$
\end{enumerate}
\end{lemma}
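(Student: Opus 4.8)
The plan is to run Ekeland's classical iterative construction. Put $\al:=\eps/\la$ and, for each $x\in X$, introduce the \emph{Ekeland set}
\[
S(x):=\set{y\in X\mid f(y)+\al\, d(x,y)\le f(x)}.
\]
Three elementary observations drive everything: (a)~$x\in S(x)$, so $S(x)\ne\es$; (b)~since $f$ is \lsc\ and $d(x,\cdot)$ is continuous, $y\mapsto f(y)+\al\, d(x,y)$ is \lsc, hence its sublevel set $S(x)$ is closed; (c)~the family is \emph{nested}: if $y\in S(x)$ then $S(y)\subseteq S(x)$, which follows by adding the inequalities defining $z\in S(y)$ and $y\in S(x)$ and invoking the triangle inequality for $d$. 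Note also that the hypothesis $f(\bx)<\inf_Xf+\eps$ forces $\inf_Xf>-\infty$, so $f$ is bounded below, and $f(\bx)<+\infty$.

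First I would build a sequence $(x_k)_{k\ge0}$ with $x_0:=\bx$, choosing inductively $x_{k+1}\in S(x_k)$ with $f(x_{k+1})<\inf_{S(x_k)}f+2^{-k}$; this is legitimate because $f$ is bounded below on the nonempty set $S(x_k)$. By~(c) the sets $S(x_0)\supseteq S(x_1)\supseteq\cdots$ are nested, nonempty and closed, and their diameters shrink to $0$: for $z\in S(x_{k+1})\subseteq S(x_k)$ one has $\al\, d(x_{k+1},z)\le f(x_{k+1})-f(z)\le f(x_{k+1})-\inf_{S(x_k)}f<2^{-k}$, whence $\mathrm{diam}\,S(x_{k+1})\le 2^{1-k}/\al\to0$. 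Since $X$ is complete, Cantor's nested-set theorem yields a unique $\hat x$ with $\set{\hat x}=\bigcap_{k\ge0}S(x_k)$.

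It remains to verify the three conclusions. Since $\hat x\in S(x_0)=S(\bx)$, we have $f(\hat x)+\al\, d(\bx,\hat x)\le f(\bx)$; this gives (ii) immediately, and, together with $f(\bx)-f(\hat x)\le f(\bx)-\inf_Xf<\eps$, it gives $d(\bx,\hat x)<\eps/\al=\la$, i.e.\ (i). For (iii), suppose some $x\ne\hat x$ satisfies $f(x)+\al\, d(x,\hat x)\le f(\hat x)$, i.e.\ $x\in S(\hat x)$; since $\hat x\in S(x_k)$ for every $k$, observation~(c) gives $S(\hat x)\subseteq S(x_k)$, so $x\in\bigcap_{k\ge0}S(x_k)=\set{\hat x}$, contradicting $x\ne\hat x$. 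Hence $f(x)+(\eps/\la)\, d(x,\hat x)>f(\hat x)$ for all $x\ne\hat x$.

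The one genuinely delicate point is forcing the diameters of the nested sets to zero: this is exactly why $x_{k+1}$ must be chosen to \emph{nearly minimize} $f$ over $S(x_k)$ rather than taken arbitrarily, and it is here that lower semicontinuity of $f$ (for closedness of the $S(x_k)$) and completeness of $X$ (for nonemptiness of the intersection) are both indispensable; the remaining steps are routine manipulations with the triangle inequality. An equivalent alternative would be to partially order $X$ by declaring $y\preceq x$ whenever $f(y)+\al\, d(x,y)\le f(x)$ and extract $\hat x$ as a $\preceq$-maximal element through a Zorn/Brezis--Browder argument, but the iterative construction above is entirely self-contained.
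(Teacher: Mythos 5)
Your proof is correct. The nested-set construction with near-minimizing choices $f(x_{k+1})<\inf_{S(x_k)}f+2^{-k}$, the closedness of the sets $S(x_k)$ via lower semicontinuity, the diameter estimate $\mathrm{diam}\,S(x_{k+1})\le 2^{1-k}\la/\eps\to0$, and the Cantor intersection argument in the complete space $X$ are all carried out properly, and each of the three conclusions is correctly extracted from $\hat x\in\bigcap_{k}S(x_k)$ together with the nesting property $S(\hat x)\subseteq S(x_k)$. Note, however, that the paper itself gives no proof of this lemma: it is quoted as a classical result with citations to the literature, so there is no in-paper argument to compare against; yours is the standard iterative proof and is entirely adequate (the Br\'ezis--Browder ordering variant you mention at the end would be an equally valid alternative). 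One point worth flagging: as printed, conclusion (iii) of the lemma reads ``for all $x\in X\setminus\{\bx\}$'', which cannot be literally correct, since taking $x=\hat x$ when $\hat x\ne\bx$ would yield $f(\hat x)>f(\hat x)$. The intended, standard formulation is ``for all $x\in X\setminus\{\hat x\}$'', and that is exactly the version you establish, so your proof matches the statement that the lemma is actually meant to assert (and that the paper uses later).
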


%\subsection{Slopes}

Given a function $\psi:X\rightarrow \R\cup\{+\infty\}$ on a metric space, its
\emph{slope} \cite{DegMarTos80} (cf. \cite{Aze03,Iof17}) and \emph{nonlocal slope} \cite{Kru15} (cf. \cite{NgaThe08}) at any $x\in\dom\psi$ are defined, respectively by
\begin{align*}
|\nabla \psi|(x):=\limsup_{u\rightarrow x,\,u\ne x} \dfrac{[\psi(x)-\psi(u)]_+}{d(x,u)}
\AND
|\nabla \psi|^\diamond(x):=\sup_{u\ne x} \dfrac{[\psi(x)-\psi(u)_+]_+}{d(x,u)}.
\end{align*}
where $\al_+:=\max\{0,\al\}$.
When $x\notin \dom \psi$, we set $|\nabla\psi|(x):=|\nabla\psi|^\diamond(x):=+\infty$.
Obviously, $0\le|\nabla \psi|(x)\le|\nabla\psi|^\diamond(x)$ for all $x\in X$ with $\psi(x)\ge0$, and both quantities can be infinite.

The next lemma from \cite{CuoKru} provides a chain rule for slopes.
It slightly improves \cite[Lemma~4.1]{AzeCor17}, where $\psi$ and $\varphi$ were assumed lower semicontinuous and continuously differentiable, respectively.
The composition $\varphi\circ\psi$ of a function
$\psi:X\rightarrow\R\cup\{+\infty\}$ on a metric space and a function
$\varphi:\R\rightarrow\R\cup\{+\infty\}$ is understood in the usual sense with the natural convention that $(\varphi\circ\psi)(x)=+\infty$ if $\psi(x)=+\infty$.

\begin{lemma}[Slope chain rule]\label{L2}
Let $X$ be a metric space, $\psi:X\rightarrow\R\cup\{+\infty\}$,
${\varphi:\R\rightarrow\R\cup\{+\infty\}}$,
$x\in\dom\psi$ and $\psi(x)\in\dom\varphi$.
Suppose $\varphi$ is nondecreasing on $\R$ and differentiable at $\psi(x)$, and either
$\varphi'(\psi(x))>0$ or $|\nabla\psi|(x)<+\infty$.
Then
\sloppy
\begin{align*}%\label{L2-1}
|\nabla(\varphi\circ\psi)|(x)=\varphi'(\psi(x))|\nabla\psi|(x).
\end{align*}
\end{lemma}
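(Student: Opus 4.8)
The plan is to prove the two inequalities $|\nabla(\varphi\circ\psi)|(x)\le\varphi'(\psi(x))|\nabla\psi|(x)$ and $|\nabla(\varphi\circ\psi)|(x)\ge\varphi'(\psi(x))|\nabla\psi|(x)$ separately, working directly from the definition of the slope as a $\limsup$ of difference quotients $[(\varphi\circ\psi)(x)-(\varphi\circ\psi)(u)]_+/d(x,u)$ over $u\to x$, $u\ne x$. The basic mechanism in both directions is to write, for $u$ near $x$ with $\psi(u)<\psi(x)$ (the only $u$ that contribute a positive numerator, since $\varphi$ is nondecreasing),
\begin{equation*}
\frac{(\varphi\circ\psi)(x)-(\varphi\circ\psi)(u)}{d(x,u)}
=\frac{\varphi(\psi(x))-\varphi(\psi(u))}{\psi(x)-\psi(u)}\cdot\frac{\psi(x)-\psi(u)}{d(x,u)},
\end{equation*}
and to control the first factor using differentiability of $\varphi$ at $t_0:=\psi(x)$: as $\psi(u)\to t_0^-$, the divided difference $(\varphi(t_0)-\varphi(\psi(u)))/(t_0-\psi(u))\to\varphi'(t_0)$.

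For the upper bound, first I would dispose of the degenerate case $|\nabla\psi|(x)=+\infty$: then the right-hand side is $+\infty$ (using $\varphi'(t_0)\ge 0$, and if $\varphi'(t_0)=0$ we are in the branch where $|\nabla\psi|(x)<+\infty$ is assumed, so this case does not arise), and there is nothing to prove; similarly $x\notin\dom\psi$ cannot occur. Assuming $|\nabla\psi|(x)<+\infty$, fix $\eta>0$. Choose $\delta_1>0$ so that $(\varphi(t_0)-\varphi(s))/(t_0-s)<\varphi'(t_0)+\eta$ whenever $s\in(t_0-\delta_1,t_0)$, and $\delta_2>0$ so that $(\psi(x)-\psi(u))/d(x,u)<|\nabla\psi|(x)+\eta$ whenever $0<d(x,u)<\delta_2$. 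Since $\psi$ is... well, $\psi$ need not be continuous, so I must be slightly careful: for $u$ with $0<d(x,u)<\delta_2$ we have $\psi(x)-\psi(u)<(|\nabla\psi|(x)+\eta)d(x,u)$, which forces $\psi(u)>t_0-(|\nabla\psi|(x)+\eta)\delta_2$, so by shrinking $\delta_2$ we may also guarantee $\psi(u)>t_0-\delta_1$ (when $\psi(u)<t_0$; if $\psi(u)\ge t_0$ the numerator of the composite quotient is $\le 0$ and contributes nothing). Multiplying the two estimates gives, for such $u$ with $\psi(u)<t_0$,
\begin{equation*}
\frac{[(\varphi\circ\psi)(x)-(\varphi\circ\psi)(u)]_+}{d(x,u)}\le(\varphi'(t_0)+\eta)\,(|\nabla\psi|(x)+\eta),
\end{equation*}
and the same bound holds trivially when $\psi(u)\ge t_0$. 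Taking $\limsup_{u\to x}$ and then $\eta\downarrow 0$ yields the upper bound.

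For the lower bound, I may assume $\varphi'(t_0)|\nabla\psi|(x)>0$, hence both factors are positive. Fix $\eta\in(0,1)$. By definition of $|\nabla\psi|(x)$ as a $\limsup$, there is a sequence $u_k\to x$, $u_k\ne x$, with $(\psi(x)-\psi(u_k))/d(x,u_k)\to|\nabla\psi|(x)>0$; in particular $\psi(u_k)<t_0$ for large $k$ and $\psi(u_k)\to t_0$, so $d(x,u_k)\to 0$ as well (in fact $d(x,u_k)\to 0$ is already given). Then the divided difference $(\varphi(t_0)-\varphi(\psi(u_k)))/(t_0-\psi(u_k))\to\varphi'(t_0)$, and multiplying,
\begin{equation*}
\frac{[(\varphi\circ\psi)(x)-(\varphi\circ\psi)(u_k)]_+}{d(x,u_k)}\to\varphi'(t_0)|\nabla\psi|(x),
\end{equation*}
whence $|\nabla(\varphi\circ\psi)|(x)\ge\varphi'(t_0)|\nabla\psi|(x)$. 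The main subtlety — and the only place the hypotheses are really used — is the bookkeeping around the non-local-continuity of $\psi$: one must extract from the slope estimate itself (not from continuity) that $\psi(u)$ lands in the one-sided neighbourhood $(t_0-\delta_1,t_0)$ of $t_0$ where the differentiability estimate for $\varphi$ is valid, and one must handle the $u$ with $\psi(u)\ge t_0$ (and the boundary behaviour, e.g. $\psi(u)=t_0$) by noting they contribute a nonpositive numerator to the composite quotient and so are harmless. The case split on whether $\varphi'(t_0)=0$ or $|\nabla\psi|(x)=+\infty$ in the hypothesis is exactly what is needed to avoid the indeterminate product $0\cdot\infty$.
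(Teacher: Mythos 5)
The paper itself gives no proof of Lemma~\ref{L2}: it is imported from \cite{CuoKru}, so there is no in-paper argument to compare against. Your two-sided strategy --- factoring the composite difference quotient as (divided difference of $\varphi$) times (difference quotient of $\psi$), using monotonicity of $\varphi$ to discard all $u$ with $\psi(u)\ge\psi(x)$, and extracting from the slope estimate itself (rather than from any continuity of $\psi$) that $\psi(u)$ lands in the one-sided neighbourhood $(t_0-\delta_1,t_0)$ where the differentiability estimate applies --- is the natural one and is in the spirit of the cited source. The upper-bound half of your argument is complete and correct, including the dichotomy on the hypothesis that avoids the indeterminate product $0\cdot\infty$.

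There is one genuine gap, in the lower bound in the case $|\nabla\psi|(x)=+\infty$ (so that $\varphi'(t_0)>0$ by hypothesis and the right-hand side equals $+\infty$; this is precisely the case where the lower bound is the nontrivial direction). You assert that along a maximizing sequence $u_k$ one has $\psi(u_k)\to t_0$ and then pass to the limit $\varphi'(t_0)$ in the divided difference of $\varphi$. The implication ``$(\psi(x)-\psi(u_k))/d(x,u_k)\to+\infty$ implies $\psi(u_k)\to t_0$'' is false: take $\psi(u_k)\equiv t_0-1$ with $d(x,u_k)\to0$; the quotient blows up while $\psi(u_k)$ stays away from $t_0$, and the divided difference of $\varphi$ then has no reason to approach $\varphi'(t_0)$. (When $|\nabla\psi|(x)<+\infty$ your deduction is sound, since $\psi(x)-\psi(u_k)$ equals the quotient times $d(x,u_k)$ and hence tends to $0$.) The conclusion still holds and the repair is short: since $\varphi$ is nondecreasing and $\varphi'(t_0)>0$, one has $\varphi(s)<\varphi(t_0)$ for every $s<t_0$; hence either $\psi(u_k)\to t_0$ along a subsequence, and your computation applies, or $\psi(u_k)\le t_0-\delta$ along a subsequence for some $\delta>0$, in which case
\begin{equation*}
\frac{\varphi(t_0)-\varphi(\psi(u_k))}{d(x,u_k)}\ \ge\ \frac{\varphi(t_0)-\varphi(t_0-\delta)}{d(x,u_k)}\ \longrightarrow\ +\infty .
\end{equation*}
Either way $|\nabla(\varphi\circ\psi)|(x)=+\infty$, as required. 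With this case supplied, the proof is complete.
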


\begin{remark}%\label{R01}
The chain rule in Lemma \ref{L2} is a local result.
Instead of assuming that $\varphi$ is defined on the whole real line, one can assume that $\varphi$ is defined and finite on a closed interval $[\al,\be]$ around the point $\psi(x)$: ${\al<\psi(x)<\be}$.
It is sufficient to redefine the composition $\varphi\circ\psi$ for $x$ with $\psi(x)\notin[\al,\be]$ as follows:
$(\varphi{\circ}\psi)(x):=\varphi(\al)$ if $\psi(x)<\al$, and
$(\varphi{\circ}\psi)(x):=\varphi(\be)$ if $\psi(x)>\be$.
This does not affect the conclusion of the lemma.
\end{remark}

%\subsection{Normal Cones and Subdifferentials}

Dual necessary conditions for extremality/stationarity properties require dual tools -- normal cones and subdifferentials.
In this paper, we use Clarke \cite{Cla83} and \Fr\ \cite{Kru03} ones for characterizations in general Banach and Asplund spaces, respectively.

Given a subset $\Omega$ of a normed vector space $X$ and a point $\bx\in \Omega$, the sets
\begin{gather}\label{NC}
N_{\Omega}^F(\bx):= \left\{x^\ast\in X^\ast\mid
\limsup_{\Omega\ni x\to\bar x} \frac {\langle x^\ast,x-\bx\rangle}
{\|x-\bx\|} \le 0 \right\},
\\\label{NCC}
N_{\Omega}^C(\bx):= \left\{x^\ast\in X^\ast\mid
\ang{x^\ast,z}\le0
\qdtx{for all}
z\in T_{\Omega}^C(\bx)\right\}
\end{gather}
are the \emph{Fr\'echet} and \emph{Clarke normal cones} to $\Omega$ at $\bx$.
In the last definition, $T_{\Omega}^C(\bx)$ stands for the \emph{Clarke tangent cone} \cite{Cla83} to $\Omega$ at $\bx$.
The sets \eqref{NC} and \eqref{NCC} are nonempty
closed convex cones satisfying $N_{\Omega}^F(\bx)\subset N_{\Omega}^C(\bx)$.
If $\Omega$ is a convex set, they reduce to the normal cone in the sense of convex analysis:
\begin{gather*}\label{CNC}
N_{\Omega}(\bx):= \left\{x^*\in X^*\mid \langle x^*,x-\bx \rangle \leq 0 \qdtx{for all} x\in \Omega\right\}.
\end{gather*}

Given a function $f:X\to\R\cup\{+\infty\}$ and a point $\bar x\in X$ with $f(\bx)<+\infty$, the \emph{Fr\'echet} and \emph{Clarke subdifferentials} of $f$ at $\bar x$ are defined as \begin{gather}\label{sdF}
\partial^F f(\bar x):=\left\{x^*\in X^*\mid \liminf_{\substack{x\to \bar x}} \dfrac{f(x)-f(\bar x)-\langle x^*,x-\bar x\rangle}{\|x-\bar x\|}\ge 0\right\},
\\\label{sdC}
\partial^Cf(\bx):=\left\{x^*\in X^*\mid \langle x^*,z\rangle \le f^\circ(\bx,z)
\quad\text{for all}\quad
z\in X\right\},
\end{gather}
where $f^\circ(\bx,z)$ is the \emph{Clarke--Rockafellar directional derivative} \cite{Roc79}
of $f$ at $\bx$ in the direction $z\in X$.
The sets \eqref{sdF} and \eqref{sdC} are closed and convex, and satisfy
${\partial^F{f}(\bx)\subset\partial^C{f}(\bx)}$.
If $f$ is convex, they
reduce to the subdifferential in the sense of convex analysis: \begin{gather*}
\partial{f}(\bx):= \left\{x^\ast\in X^\ast\mid
f(x)-f(\bx)-\langle{x}^\ast,x-\bx\rangle\ge 0 \qdtx{for all} x\in X \right\}.%\label{Frsd}
\end{gather*}
\if{
The relationships between the subdifferentials of $f$
at $\bar x\in\dom f$ and the corresponding normal cones to its epigraph at $(\bx, f(\bx))$ are given by
\begin{gather*}
\sd^F f(\bar x) = \left\{x^* \in X^*\mid (x^*,-1) \in N^F_{\epi f}(\bar x,f(\bar x))\right\},
\\
\partial^C{f}(\bx)= \left\{x^\ast\in X^\ast\mid
(x^*,-1)\in N_{\epi f}^C(\bx,f(\bx))\right\}.
\end{gather*}
}\fi
By convention, we set $N_{\Omega}^F(\bx)=N_{\Omega}^C(\bx):=\es$ if $\bx\notin \Omega$ and $\partial^F{f}(\bx)=\partial^C{f}(\bx):=\es$ if ${f(\bx)=+\infty}$.
It is well known that $N_{\Omega}^F(\bx)=\partial^Fi_\Omega(\bx)$ and $N_{\Omega}^C(\bx)=\partial^Ci_\Omega(\bx)$, where $i_\Omega$ is the \emph{indicator function} of $\Omega$: $i_\Omega(x)=0$ if $x\in \Omega$ and $i_\Omega(x)=+\infty$ if $x\notin \Omega$.
\sloppy

We often use the generic notations $N$ and $\sd$ for both Fr\'echet and Clarke objects, specifying wherever necessary that either $N:=N^F$ and $\sd:=\sd^F$, or $N:=N^C$ and $\sd:=\sd^C$.

The proofs of the main results in this paper rely on several
subdifferential sum rules.
Below we provide these rules for completeness; cf. \cite{Roc79,Fab89,Kru03,Mor06.1}.

\begin{lemma}[Subdifferential sum rules] \label{SR}
Suppose $X$ is a normed vector space, ${f_1,f_2:X\to\R\cup\{+\infty\}}$, and $\bx\in\dom f_1\cap\dom f_2$.
\begin{enumerate}
\item
{\bf Convex sum rule}. Suppose
$f_1$ and $f_2$ are convex and $f_1$ is continuous at a point in $\dom f_2$.
Then
$
\partial (f_1+f_2) (\bar x) = \sd f_1(\bx) +\partial f_2(\bx).
$

\item
{\bf Clarke--Rockafellar sum rule}. Suppose
$f_1$ is Lipschitz continuous and
$f_2$
is lower semicontinuous in a neighbourhood of $\bar x$.
Then
$
\partial^C(f_1+f_2)(\bar x)\subset\sd^C f_1(\bx) +\partial^Cf_2(\bx).
$

\item
{\bf Fuzzy sum rule}. Suppose $X$ is Asplund,
$f_1$ is Lipschitz continuous and
$f_2$
is lower semicontinuous in a neighbourhood of $\bar x$.
Then, for any $x^*\in\partial^F(f_1+f_2) (\bar x)$ and $\varepsilon>0$, there exist $x_1,x_2\in X$ with $\|x_i-\bar x\|<\varepsilon$, $|f_i(x_i)-f_i(\bar x)|<\varepsilon$ $(i=1,2)$, such that
$
x^*\in \partial^Ff_1(x_1) +\partial^Ff_2(x_2) + \varepsilon\B^\ast.
$
\end{enumerate}
\end{lemma}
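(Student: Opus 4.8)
We outline a proof, treating the three rules in turn and deriving the second from the first.

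For Part (i) --- the Moreau--Rockafellar theorem --- the inclusion $\sd f_1(\bx)+\sd f_2(\bx)\subseteq\sd(f_1+f_2)(\bx)$ is immediate from the definition of the convex subdifferential, so the work is in the reverse one. Given $x^\ast\in\sd(f_1+f_2)(\bx)$, the plan is to separate, in $X\times\R$, the two convex sets
\[
C_1:=\{(x,t)\mid t\ge f_1(x)-f_1(\bx)-\ang{x^\ast,x-\bx}\},\qquad
C_2:=\{(x,t)\mid t\le f_2(\bx)-f_2(x)\};
\]
the subgradient inequality for $x^\ast$ says precisely that $\Int C_1\cap C_2=\es$, while continuity of $f_1$ at a point of $\dom f_2$ makes $\Int C_1$ nonempty, so the geometric Hahn--Banach theorem provides a separating functional; it cannot be ``vertical'' (again by continuity of $f_1$), hence after normalisation it exhibits a $y^\ast\in\sd f_1(\bx)$ with $x^\ast-y^\ast\in\sd f_2(\bx)$.

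For Part (ii) I would reduce to Part (i). If $f_1$ is Lipschitz near $\bx$, its Clarke--Rockafellar directional derivative $f_1^\circ(\bx;\cdot)$ is a finite, continuous, sublinear function on $X$; if $f_2$ is lower semicontinuous near $\bx$, then $f_2^\circ(\bx;\cdot)$ is sublinear with $0$ in its domain; and a routine ``$\limsup$ of a sum'' estimate yields the subadditivity
\[
(f_1+f_2)^\circ(\bx;z)\le f_1^\circ(\bx;z)+f_2^\circ(\bx;z)\qdtx{for all}z\in X.
\]
Hence every $x^\ast\in\sd^C(f_1+f_2)(\bx)$ belongs to the convex subdifferential at $0$ of the sublinear function $p:=f_1^\circ(\bx;\cdot)+f_2^\circ(\bx;\cdot)$, and applying the convex sum rule of Part (i) to $p$ --- legitimate since $f_1^\circ(\bx;\cdot)$ is finite and continuous --- gives $\sd p(0)=\sd^C f_1(\bx)+\sd^C f_2(\bx)$, as required.

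Part (iii) is the substantive one. First I would reduce to a minimisation problem: subtracting $\ang{x^\ast,\cdot}$ from $f_1$ and translating, assume $x^\ast=0$ and $\bx=0$; since $0\in\sd^F(f_1+f_2)(0)$ means $(f_1+f_2)(x)\ge(f_1+f_2)(0)+o(\norm x)$, adding a small multiple $\delta\norm\cdot$ of the norm (which only contributes an extra $\delta\B^\ast$ to the final estimate, harmless for small $\delta$) makes $f_1+f_2$ attain a local minimum at $0$. Then I would pass to $X\times X$, where $g(u,v):=f_1(u)+f_2(v)$ has an approximate minimum at $(0,0)$ along the diagonal, \emph{decouple} the implicit constraint $u=v$ by a smooth penalty, and apply a smooth variational principle to produce points $x_1,x_2$ near $0$ with $f_i(x_i)$ near $f_i(0)$ and a Fr\'echet-smooth perturbation whose derivative at $(x_1,x_2)$ lies in $\sd^F f_1(x_1)\times\sd^F f_2(x_2)$ and has the form $(\xi,-\xi)$ up to arbitrarily small error, with $\norm\xi$ arbitrarily small; adding the two components gives $0\in\sd^F f_1(x_1)+\sd^F f_2(x_2)+\eps\B^\ast$, and the value bounds $|f_i(x_i)-f_i(\bx)|<\eps$ follow from the localisation in the variational principle together with the lower semicontinuity of $f_2$ and the continuity of $f_1$. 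The hard part will be that the smooth variational principle (Borwein--Preiss, Deville--Godefroy--Zizler) used in the decoupling step requires a Fr\'echet-smooth Lipschitz bump, which a general Asplund space need not carry; I expect to get around this by \emph{separable reduction} --- restrict the data to a separable subspace $Y\subseteq X$ chosen large enough that Fr\'echet subgradients of $f_i|_Y$ approximate those of $f_i$ within $\eps\B^\ast$, use that $Y$, being a separable Asplund space, has separable dual and hence an equivalent Fr\'echet-differentiable norm and a smooth bump, run the decoupling in $Y$, and lift the subgradients back to $X$ --- or, since only the conclusion is needed, simply invoke Fabian's theorem that Asplundity of $X$ is equivalent to the validity of this fuzzy sum rule, as the references in the statement indicate.
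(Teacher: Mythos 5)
The paper does not prove Lemma~\ref{SR} at all: it is stated ``for completeness'' and attributed to the literature (Rockafellar, Fabian, and the surveys cited), so there is no in-paper argument to compare yours against. Your three sketches are faithful reconstructions of exactly those standard proofs: the Hahn--Banach separation in $X\times\R$ for Moreau--Rockafellar, Rockafellar's reduction of the Clarke sum rule to the convex one via subadditivity of the directional derivative, and Fabian's decoupling-plus-separable-reduction argument for the fuzzy rule. Two places where your outline is thinner than the real proofs deserve flagging. In part (ii), the inequality $(f_1+f_2)^\circ(\bx;z)\le f_1^\circ(\bx;z)+f_2^\circ(\bx;z)$ is \emph{not} a routine ``$\limsup$ of a sum'' estimate for the Clarke--Rockafellar derivative, which involves $f$-attentive convergence and an infimum over neighbourhoods of the direction; it genuinely uses the Lipschitz continuity of $f_1$ (it is false for two general lsc functions), and before invoking part (i) you should dispose of the degenerate case where $f_2^\circ(\bx;\cdot)$ takes the value $-\infty$, in which case $\sd^Cf_2(\bx)=\es$ and there is nothing to prove. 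In part (iii), the penalisation/decoupling step as you describe it produces the conclusion only up to the separable-reduction machinery, which is substantial; since the paper itself treats the lemma as a black box, invoking Fabian's theorem there, as you suggest at the end, is entirely in the spirit of the text. With those caveats the proposal is sound.
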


Recall that a Banach space is \emph{Asplund} if every continuous convex function on an open convex set is Fr\'echet differentiable on a dense subset \cite{Phe93}, or equivalently, if the dual of each its separable subspace is separable.
We refer the reader to \cite{Phe93,Mor06.1} for discussions about and characterizations of Asplund spaces.
All reflexive, particularly, all finite dimensional Banach spaces are Asplund.

%The first two rules in the above lemma are examples of \emph{exact} sum rules as all subdifferentials are computed at the reference point.
Another useful subdifferential
%possessing an exact sum rule in general Banach spaces
is the \emph{approximate
$G$-subdifferential} of Ioffe \cite{Iof17}.
It can replace the Clarke one in the statements of the current paper.
%In contrast, the sum rule in part (iii) is a \emph{fuzzy} or \emph{approximate} sum rule.
%Unlike the classical convex sum rule in part (i),
%the sum rules in parts (ii) and (iii) are valid in general only as inclusions.

The following simple fact is an immediate consequence of the definitions.
%of the \Fr\ and Clarke subdifferentials and normal cones
%(cf., e.g., \cite{Cla83,Kru03,Mor06.1}).

\begin{lemma}\label{L2.4}
Let $\Omega_1,\Omega_2$ be subsets of a normed vector space $X$ and $\omega_i \in \Omega_i $ $(i=1,2)$.
Then
$N_{\Omega_1 \times \Omega_2}(\omega_1,\omega_2) =
N_{\Omega_1}(\omega_1)\times N_{\Omega_2}(\omega_2)$,
where in both parts of the equality $N$ stands for either the \Fr\ ($N:=N^F$) or the Clarke ($N:=N^C$) normal cone.
\end{lemma}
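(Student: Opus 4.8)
The plan is to treat the \Fr\ and Clarke cases separately, since the two normal cones are defined differently, and in each case to establish the two inclusions directly from the definition. Throughout I would write $X:=X_1\times X_2$ with the maximum norm, and use the canonical identification $X^*=X_1^*\times X_2^*$ with the pairing $\langle(x_1^*,x_2^*),(x_1,x_2)\rangle=\langle x_1^*,x_1\rangle+\langle x_2^*,x_2\rangle$, so that both sides of the asserted equality live in the same space.

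For the \Fr\ case I would argue from definition \eqref{NC}. For the inclusion ``$\supset$'', take $x_i^*\in N_{\Omega_i}^F(\omega_i)$; given $\eps>0$, choose $\delta>0$ so that $\langle x_i^*,x_i-\omega_i\rangle\le\eps\norm{x_i-\omega_i}$ whenever $x_i\in\Omega_i$ and $\norm{x_i-\omega_i}<\delta$ $(i=1,2)$. Then for $(x_1,x_2)\in\Omega_1\times\Omega_2$ near $(\omega_1,\omega_2)$ the numerator in \eqref{NC} is at most $\eps(\norm{x_1-\omega_1}+\norm{x_2-\omega_2})\le2\eps\norm{(x_1-\omega_1,x_2-\omega_2)}$; since $\eps$ is arbitrary, $(x_1^*,x_2^*)\in N_{\Omega_1\times\Omega_2}^F(\omega_1,\omega_2)$. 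For ``$\subset$'', given $(x_1^*,x_2^*)$ in the left-hand side, I would restrict the $\limsup$ in \eqref{NC} to points $(x_1,\omega_2)$ with $x_1\to\omega_1$, $x_1\in\Omega_1\setminus\{\omega_1\}$: then the denominator reduces to $\norm{x_1-\omega_1}$ and the numerator to $\langle x_1^*,x_1-\omega_1\rangle$, so $x_1^*\in N_{\Omega_1}^F(\omega_1)$, and symmetrically $x_2^*\in N_{\Omega_2}^F(\omega_2)$. Note that the maximum-norm convention is used here only to absorb the constant $2$ and is otherwise immaterial.

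For the Clarke case I would use definition \eqref{NCC}, $N_\Omega^C(\omega)=\{x^*\mid\langle x^*,z\rangle\le0\ \text{for all}\ z\in T_\Omega^C(\omega)\}$, and first prove the companion product formula for Clarke tangent cones, $T_{\Omega_1\times\Omega_2}^C(\omega_1,\omega_2)=T_{\Omega_1}^C(\omega_1)\times T_{\Omega_2}^C(\omega_2)$, from the sequential characterization of $T^C$ \cite{Cla83} ($v\in T_\Omega^C(\omega)$ iff for every $\Omega\ni\omega_k\to\omega$ and every $t_k\downarrow0$ there exist $v_k\to v$ with $\omega_k+t_kv_k\in\Omega$). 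Here ``$\supset$'' is obtained by pairing the approximating sequences coordinatewise, and ``$\subset$'' by feeding in sequences whose second coordinate is held constant at $\omega_2$ (and symmetrically). It would then remain to record the elementary polarity identity: for cones $K_i\subset X_i$, a functional $(x_1^*,x_2^*)$ satisfies $\langle x_1^*,z_1\rangle+\langle x_2^*,z_2\rangle\le0$ for all $(z_1,z_2)\in K_1\times K_2$ if and only if $\langle x_i^*,z_i\rangle\le0$ for all $z_i\in K_i$ $(i=1,2)$ --- the forward implication by testing against $(z_1,0)$ and $(0,z_2)\in K_1\times K_2$ (recall $0\in K_i$), the converse being trivial. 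Applying this with $K_i:=T_{\Omega_i}^C(\omega_i)$ gives the claim; alternatively one may simply quote the tangent-cone product formula from \cite{Cla83}.

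I do not expect a genuine obstacle: the result is, as the authors say, an immediate consequence of the definitions. The only points deserving slight care are selecting the convenient (sequential) form of the Clarke tangent cone so the coordinatewise argument is transparent, and resisting the temptation to route the Clarke case through $N_\Omega^C=\partial^Ci_\Omega$ together with a ``sum rule'' for $\partial^C$ applied to the separated function $i_{\Omega_1}(x_1)+i_{\Omega_2}(x_2)$, since such a rule is itself proved by exactly this computation and would make the argument circular.
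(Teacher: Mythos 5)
Your proof is correct and matches the paper's intent exactly: the paper gives no proof of Lemma~\ref{L2.4}, dismissing it as ``an immediate consequence of the definitions,'' and your argument is precisely the direct verification from \eqref{NC} and \eqref{NCC} that this remark presupposes (with the tangent-cone product formula and the polarity identity supplying the Clarke case). Both inclusions in each case check out, and your closing caution about not routing the Clarke case through a sum rule for $\partial^C i_{\Omega_1\times\Omega_2}$ is well taken.
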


We are going to use a representation of the subdifferential of the function \eqref{f0} given in the next lemma; cf. \cite{KruLukTha17,CuoKru20.2}.

\begin{lemma}\label{L6}
Suppose $X$ is a normed vector space, $f:X^n\to\R_+$ is given by \eqref{f0}, $a_1,\ldots,a_{n-1},x_1,\ldots,x_n\in X$ and $\max_{1\le i\le n-1} \|x_i-a_i-x_n\|>0$.
Then
\begin{multline}\label{L6-2}
\sd f(x_1,\ldots,x_n)=\Big\{\left(x_{1}^*,\ldots, x_{n}^*\right)\in(X^*)^{n}\mid
\sum_{i=1}^{n}x_{i}^*=0,
\\
\sum_{i=1}^{n-1}\|x_{i}^*\|=1,\;
\sum_{i=1}^{n-1}\ang{x_{i}^*,x_{i}-a_i-x_n} =\max_{1\le{i}\le{n-1}} \norm{x_{i}-a_i-x_n}\Big\}.
\end{multline}
\end{lemma}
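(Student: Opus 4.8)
The plan is to realize $f$ as the composition of an affine operator and a convex continuous ``outer'' function, and then invoke standard convex subdifferential calculus. Define the linear operator $A:X^n\to X^{n-1}$ by $A(u_1,\dots,u_n):=(u_1-u_n,\dots,u_{n-1}-u_n)$, put $a:=(a_1,\dots,a_{n-1})\in X^{n-1}$, and let $g:X^{n-1}\to\R_+$ be $g(v_1,\dots,v_{n-1}):=\max_{1\le i\le n-1}\norm{v_i}$. Then $f(u)=g(Au-a)$ for all $u\in X^n$, with $f$ given by \eqref{f0}. Since $g$ is convex and continuous (indeed Lipschitz) on all of $X^{n-1}$, the convex chain rule applies and gives $\sd f(x_1,\dots,x_n)=A^*\,\sd g(v_1,\dots,v_{n-1})$, where $v_i:=x_i-a_i-x_n$. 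A short computation of the adjoint shows $A^*(v_1^*,\dots,v_{n-1}^*)=(v_1^*,\dots,v_{n-1}^*,-\sum_{i=1}^{n-1}v_i^*)$; in particular, every element of $\sd f(x_1,\dots,x_n)$ automatically satisfies $\sum_{i=1}^{n}x_i^*=0$, while its first $n-1$ components coincide with those of the corresponding element of $\sd g(v)$.

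The next step is to describe $\sd g(v)$ at a point $v=(v_1,\dots,v_{n-1})$ with $m:=\max_{1\le i\le n-1}\norm{v_i}>0$ (which holds here by the hypothesis $\max_{1\le i\le n-1}\norm{x_i-a_i-x_n}>0$). Writing $g=\max_{1\le i\le n-1}g_i$ with $g_i(v):=\norm{v_i}$, the max rule for convex continuous functions yields $\sd g(v)=\conv\bigcup_{i\in I(v)}\sd g_i(v)$, where $I(v):=\{i\mid\norm{v_i}=m\}$ is the active index set. Each $\sd g_i(v)$ consists of the tuples whose $i$-th component lies in $\sd\norm{\cdot}(v_i)$ and whose remaining components vanish, and since $\norm{v_i}=m>0$ for $i\in I(v)$, one has $\sd\norm{\cdot}(v_i)=\{w\in X^*\mid\norm{w}=1,\ \ang{w,v_i}=\norm{v_i}\}$. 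Taking convex combinations, this leads to the compact description: $(v_1^*,\dots,v_{n-1}^*)\in\sd g(v)$ if and only if $\sum_{i=1}^{n-1}\norm{v_i^*}=1$ and $\sum_{i=1}^{n-1}\ang{v_i^*,v_i}=m$. Here the forward inclusion is immediate from the above; for the converse, the inequalities $\ang{v_i^*,v_i}\le\norm{v_i^*}\,\norm{v_i}\le m\,\norm{v_i^*}$ combined with $\sum_i\ang{v_i^*,v_i}=m=m\sum_i\norm{v_i^*}$ must hold with equality termwise, forcing $v_i^*=0$ for $i\notin I(v)$ and $v_i^*/\norm{v_i^*}\in\sd\norm{\cdot}(v_i)$ otherwise, so that $(v_1^*,\dots,v_{n-1}^*)=\sum_{i\in I(v)}\norm{v_i^*}\,(v_i^*/\norm{v_i^*})$ is a convex combination of active subgradients.

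Combining the two steps, substituting $v_i=x_i-a_i-x_n$ and using the formula for $A^*$, the membership $(x_1^*,\dots,x_n^*)\in\sd f(x_1,\dots,x_n)$ becomes exactly the conjunction of $\sum_{i=1}^{n}x_i^*=0$, $\sum_{i=1}^{n-1}\norm{x_i^*}=1$ and $\sum_{i=1}^{n-1}\ang{x_i^*,x_i-a_i-x_n}=\max_{1\le i\le n-1}\norm{x_i-a_i-x_n}$, which is precisely \eqref{L6-2}. The step that needs the most care is the compact description of $\sd g(v)$ above --- the equality-case analysis and the re-assembly of the convex combination --- whereas the only other thing to verify, the qualification condition for the convex chain rule, is trivially met because $g$ is finite and continuous everywhere on $X^{n-1}$. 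Alternatively, one can bypass the chain rule and check \eqref{L6-2} directly from the definition of the convex subdifferential of $f$, testing the subgradient inequality against suitable perturbations of $x$; the amount of work is comparable.
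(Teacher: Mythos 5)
Your argument is correct. Note that the paper itself gives no proof of Lemma~2.6 --- it is stated with only a citation to the references \cite{KruLukTha17,CuoKru20.2} --- but your derivation (writing $f=g\circ T$ with $T$ affine and $g$ the maximum of coordinate norms, applying the convex chain rule, the max rule for finitely many continuous convex functions, and the termwise equality analysis in $\ang{v_i^*,v_i}\le\norm{v_i^*}\,\norm{v_i}\le m\norm{v_i^*}$ to get the compact description of $\sd g$) is the standard route used in those references, and every qualification condition you invoke (continuity of $g$, weak$^*$ compactness of the finitely many active subdifferentials in the max rule) is indeed satisfied. The hypothesis $\max_{1\le i\le n-1}\norm{x_i-a_i-x_n}>0$ enters exactly where you use it, to guarantee $\sd\norm{\cdot}(v_i)$ lies on the dual unit sphere for active indices.
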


\begin{remark}\label{R5}
\begin{enumerate}
\item
It is easy to notice that in the representation \eqref{L6-2}, for any $i=1,\ldots,n-1$, either $\ang{x_{i}^*,x_{i}-a_i-x_n} =\max_{1\le{j}\le{n-1}} \norm{x_{j}-a_j-x_n}$ or ${x_{i}^*=0}$.

\item
The maximum norm on $X^{n-1}$ used in \eqref{f0} and \eqref{L6-2} is a composition of the given norm on $X$ and the maximum norm on $\R^{n-1}$.
The corresponding dual norm produces the sum of the norms in \eqref{L6-2}.
Any other finite dimensional norm can replace the maximum norm in \eqref{f0} and \eqref{L6-2} as long as the corresponding dual norm is used to replace the sum in~\eqref{L6-2}.
\end{enumerate}
\end{remark}

\section{Slope Characterizations of Non-Intersection Properties}\label{Slope}

In this and the next section, we
%establish nonlinear
formulate
primal (slope) and dual (normal cone) necessary conditions for the key non-intersection properties \eqref{D1-1}, \eqref{D1-3}, \eqref{P10-1} and \eqref{P10-3}, with fixed vectors $a_i$'s.
These ubiquitous properties are present in one form or another in all parts of Definition~\ref{D8} and Proposition~\ref{P10}, as well as all known extensions of the extremality/stationarity properties.

%(Linear)
Metric and dual necessary conditions for \eqref{D1-1} and  \eqref{P10-1} have been studied in \cite{BuiKru19}.
Here we aim at establishing
%more general nonlinear characterizations (also in the slope form).
slope necessary conditions.
The nonlinearity in our model is determined by a continuous strictly increasing function $\varphi:\mathbb{R}_+ \rightarrow\mathbb{R}_+$ satisfying $\varphi(0)=0$ and $\lim_{t\to+\infty}\varphi(t)=+\infty$.
The family of all such functions is denoted by $\mathcal{C}$.
We denote by $\mathcal{C}^1$ the subfamily of functions $\varphi\in\mathcal{C}$ which are continuously differentiable on $]0,+\infty[$ with $\varphi'(t)>0$ for all $t>0$.
Obviously, if $\varphi\in\mathcal{C}$ ($\varphi\in\mathcal{C}^1$), then $\varphi\iv\in\mathcal{C}$ ($\varphi\iv\in\mathcal{C}^1$).
%To avoid introducing another symbol, with some abuse of notation, we will sometimes assume functions $\varphi\in\mathcal{C}^1$ to be continuously differentiable.
%Such instances will be explicitly marked.

Along with the conventional maximum norm, we are going to consider on the product $X^n$
%of $n>1$ copies of the space $X$
the following
parametric norm depending on numbers $\la>0$ and $\eta>0$:
\begin{equation}\label{pnorm}
\norm{(u_1,\ldots,u_n)}_{\la,\eta}:= \max\left\{\la\iv\norm{u_1},\ldots,\la\iv \norm{u_{n-1}},\eta\iv\norm{u_n}\right\},
\quad
u_1,\ldots,u_n\in X.
\end{equation}

To quantify non-intersection properties, we are going to use the following asymmetric distance-like quantity (\emph{nonintersect index} \cite{ZheNg11}):
\begin{gather}\label{d}
d(\Omega_1,\ldots,\Omega_{n}) :=\inf_{u_i\in\Omega_i\;(i=1,\ldots,n)}\max_{1\le{i}\le{n}-1} \norm{u_n-u_i}.
\end{gather}
If $\cap_{i=1}^n\Omega_i\ne\es$, then
$d(\Omega_1-a_1,\ldots,\Omega_{n-1}-a_{n-1},\Omega_{n}) \le\max_{1\le i\le n-1}\norm{a_i}$.

The next statement gives slope necessary conditions for the  non-intersection property \eqref{P10-1},
where the vectors $a_1,\ldots,a_{n-1}\in X$ are fixed.
The statement is a bit long as it actually combines three separate statement.
If $\cap_{i=1}^n\Omega_i\ne\es$, then condition \eqref{P10-1} implies that ${\max_{1\le i\le n-1}\norm{a_i}>0}$.
Note that condition \eqref{P10-1} is not symmetric: the role of the set $\Omega_n$ differs from that of the other sets $\Omega_1,\ldots,\Omega_{n-1}$.
This difference is exploited in the subsequent statements.

\begin{theorem}\label{T12}
Let $\Omega_1,\ldots,\Omega_n$ be closed subsets of a Banach space $X$,
$\bx\in\cap_{i=1}^n\Omega_i$, $a_i\in{X}$ $(i=1,\ldots,n-1)$, $\eps>0$ and $\varphi\in\mathcal{C}$.
Suppose that condition \eqref{P10-1} is satisfied, and
\begin{gather}\label{T12-1}
\varphi\Big(\max_{1\le i\le n-1}\norm{a_i}\Big) <\varphi(d(\Omega_1-a_1,\ldots,\Omega_{n-1}-a_{n-1}, \Omega_n))+\eps,
\end{gather}
where $d(\cdot)$ is given by \eqref{d}.
%(or simply $\varphi\left(\max_{1\le i\le n-1}\norm{a_i}\right)<\eps$).
Then, for any $\la>0$ and $\eta>0$,
there exist points
${\omega_i\in\Omega_i\cap{B}_\la(\bx)}$ $(i=1,\ldots,n-1)$ and $\omega_n\in\Omega_n\cap \B_\eta(\bx)$ such that
\begin{gather}\label{T12-2}
\hspace{-2mm} \sup_{\substack{u_i\in\Omega_i\;(i=1,\ldots,n)\\ (u_1,\ldots,u_n)\ne(\omega_1,\ldots,\omega_n)}} \hspace{-1mm} \frac{\varphi\Big(\max\limits_{1\le{i}\le{n}-1} \norm{\omega_n+a_i-\omega_i}\Big) -\varphi\Big(\max\limits_{1\le{i}\le{n}-1} \norm{u_n+a_i-u_i}\Big)} {\norm{(u_1-\omega_1,\ldots,u_n-\omega_n)}_{\la,\eta}} <\eps,
\\\label{T12-3}
0<\max_{1\le{i}\le{n}-1}\|\omega_n+a_i-\omega_i\|\le \max_{1\le i\le n-1}\norm{a_i}.
\end{gather}
As a consequence,
\begin{gather}\label{T12-4}
\hspace{-1mm} \limsup_{\substack{\Omega_i\ni u_i\to\omega_i\; (i=1,\ldots,n)\\ (u_1,\ldots,u_n)\ne(\omega_1,\ldots,\omega_n)}} \hspace{-1mm} \frac{\varphi\Big(\max\limits_{1\le{i}\le{n}-1} \norm{\omega_n+a_i-\omega_i}\Big) -\varphi\Big(\max\limits_{1\le{i}\le{n}-1} \norm{u_n+a_i-u_i}\Big)} {\norm{(u_1-\omega_1,\ldots,u_n-\omega_n)}_{\la,\eta}} <\eps.
\end{gather}
Moreover, if $\varphi$ is differentiable at $\max_{1\le{i}\le{n}-1} \norm{\omega_n+a_i-\omega_i}$, then
\begin{multline}\label{T12-5}
\varphi'\Big(\max\limits_{1\le{i}\le{n}-1} \norm{\omega_n+a_i-\omega_i}\Big)
\\
\times\limsup_{\substack{\Omega_i\ni u_i\to\omega_i\; (i=1,\ldots,n)\\ (u_1,\ldots,u_n)\ne(\omega_1,\ldots,\omega_n)}} \frac{\max\limits_{1\le{i}\le{n}-1} \norm{\omega_n+a_i-\omega_i} -\max\limits_{1\le{i}\le{n}-1} \norm{u_n+a_i-u_i}} {\norm{(u_1-\omega_1,\ldots,u_n-\omega_n)}_{\la,\eta}} <\eps,
\end{multline}
with the convention $0\cdot(+\infty)=0$.
\end{theorem}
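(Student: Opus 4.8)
The plan is to derive everything from the Ekeland variational principle applied to a suitably normed copy of $X^n$, using the function built from $\varphi$ and the maximum norm appearing in \eqref{f0}. First I would set $M:=\max_{1\le i\le n-1}\|a_i\|$, which is positive by the remark preceding the theorem, and define on the product space $\widetilde X:=X^n$ the complete metric induced by the parametric norm $\|\cdot\|_{\la,\eta}$ from \eqref{pnorm}. On the closed (hence complete) subset $\Omega_1\times\cdots\times\Omega_n$ consider the function
\[
g(u_1,\ldots,u_n):=\varphi\Big(\max_{1\le i\le n-1}\|u_n+a_i-u_i\|\Big),
\]
which is lower semicontinuous (indeed continuous) and, by \eqref{P10-1}, strictly positive on $\Omega_1\times\cdots\times\Omega_n$, so $\inf g\ge\varphi\big(d(\Omega_1-a_1,\ldots,\Omega_{n-1}-a_{n-1},\Omega_n)\big)$ by the very definition \eqref{d} of the nonintersect index and monotonicity of $\varphi$. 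At the reference point $(\bx,\ldots,\bx)$ we have $g(\bx,\ldots,\bx)=\varphi(M)$, and hypothesis \eqref{T12-1} says precisely $g(\bx,\ldots,\bx)<\inf g+\eps$.

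Next I would apply Lemma~\ref{evp} (Ekeland) with this $f:=g$, the same $\eps$, and $\lambda:=1$ in the metric $\|\cdot\|_{\la,\eta}$: this produces a point $(\omega_1,\ldots,\omega_n)\in\Omega_1\times\cdots\times\Omega_n$ with $\|(\omega_1-\bx,\ldots,\omega_n-\bx)\|_{\la,\eta}<1$, i.e.\ $\omega_i\in\Omega_i\cap B_\la(\bx)$ for $i<n$ and $\omega_n\in\Omega_n\cap B_\eta(\bx)$, with $g(\omega_1,\ldots,\omega_n)\le g(\bx,\ldots,\bx)$, and with the Ekeland inequality
\[
g(u_1,\ldots,u_n)+\eps\,\|(u_1-\omega_1,\ldots,u_n-\omega_n)\|_{\la,\eta}>g(\omega_1,\ldots,\omega_n)
\]
for all $(u_1,\ldots,u_n)\ne(\omega_1,\ldots,\omega_n)$ in the product. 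Rearranging this inequality gives exactly \eqref{T12-2}. The second inequality in \eqref{T12-3} follows from $g(\omega_1,\ldots,\omega_n)\le g(\bx,\ldots,\bx)=\varphi(M)$ together with the strict monotonicity of $\varphi$ (so that $\varphi(t)\le\varphi(M)$ forces $t\le M$), while the strict positivity on the left of \eqref{T12-3} is again \eqref{P10-1}. Then \eqref{T12-4} is the immediate weakening of the supremum in \eqref{T12-2} to the $\limsup$ over $u_i\to\omega_i$, since restricting the set over which a supremum is taken can only decrease it, and the $\limsup$ over a neighbourhood filter is dominated by the supremum over the punctured set $\Omega_1\times\cdots\times\Omega_n\setminus\{(\omega_1,\ldots,\omega_n)\}$.

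Finally, for \eqref{T12-5} I would recognize the left-hand side of \eqref{T12-4} as $|\nabla(\varphi\circ\psi)|(\omega_1,\ldots,\omega_n)$, where $\psi(u_1,\ldots,u_n):=\max_{1\le i\le n-1}\|u_n+a_i-u_i\|$ is the restriction of the function \eqref{f0} to $\Omega_1\times\cdots\times\Omega_n$ (equivalently, add the indicator of the product set), computed in the metric $\|\cdot\|_{\la,\eta}$, and the limsup multiplying $\varphi'$ in \eqref{T12-5} is $|\nabla\psi|(\omega_1,\ldots,\omega_n)$ in the same metric. Since $\psi(\omega_1,\ldots,\omega_n)>0$ by \eqref{T12-3}, the assumed differentiability of $\varphi$ at that value, together with $\varphi$ nondecreasing and either $\varphi'>0$ there or $|\nabla\psi|<+\infty$ (one may note $|\nabla\psi|\le\max\{\la,\eta\}$ as $\psi$ is Lipschitz with constant $1$ in the original norm, hence with constant $\max\{\la,\eta\}$ in $\|\cdot\|_{\la,\eta}$), lets me invoke Lemma~\ref{L2} to get $|\nabla(\varphi\circ\psi)|=\varphi'(\psi)\,|\nabla\psi|$ at $(\omega_1,\ldots,\omega_n)$; substituting into \eqref{T12-4} yields \eqref{T12-5}, with the convention $0\cdot(+\infty)=0$ covering the case $\varphi'=0$. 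The only mildly delicate point is the bookkeeping that the Ekeland principle and the slope are all taken with respect to the same parametric norm \eqref{pnorm} so that the ratio in \eqref{T12-2} really matches the normalization appearing in \eqref{T12-4}–\eqref{T12-5}; everything else is routine monotonicity and the two cited lemmas, and no genuine obstacle arises.
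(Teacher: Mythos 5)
Your proposal follows exactly the paper's route: restrict the function $g=\varphi\circ\psi$ with $\psi(u_1,\ldots,u_n)=\max_{1\le i\le n-1}\|u_n+a_i-u_i\|$ to the complete metric space $\Omega_1\times\cdots\times\Omega_n$ equipped with the parametric norm \eqref{pnorm}, apply the Ekeland variational principle, read off \eqref{T12-2}--\eqref{T12-4} from the Ekeland inequalities and the monotonicity of $\varphi$, and obtain \eqref{T12-5} from the slope chain rule (Lemma~\ref{L2}), using the finiteness of $|\nabla\psi|$ to cover the case $\varphi'=0$. All of that is sound, and your observation that $\psi$ is Lipschitz in the metric $\|\cdot\|_{\la,\eta}$ (so Lemma~\ref{L2} applies even when $\varphi'$ vanishes) is a correct and welcome piece of bookkeeping that the paper leaves implicit.

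There is, however, one genuine slip: you apply Ekeland with the full deviation $\eps$ and then claim that ``rearranging this inequality gives exactly \eqref{T12-2}.'' Ekeland's conclusion gives, for each individual $(u_1,\ldots,u_n)\ne(\omega_1,\ldots,\omega_n)$, the strict bound
\begin{gather*}
\frac{g(\omega_1,\ldots,\omega_n)-g(u_1,\ldots,u_n)}{\|(u_1-\omega_1,\ldots,u_n-\omega_n)\|_{\la,\eta}}<\eps,
\end{gather*}
but the supremum of a family of quantities each strictly below $\eps$ need only satisfy $\sup\le\eps$; it can equal $\eps$. So your argument proves \eqref{T12-2} with $\le\eps$ rather than $<\eps$. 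The fix is the one the paper uses: the strict inequality \eqref{T12-1} leaves room to choose $\eps'$ with $\varphi\big(\max_{1\le i\le n-1}\|a_i\|\big)-\varphi\big(d(\Omega_1-a_1,\ldots,\Omega_{n-1}-a_{n-1},\Omega_n)\big)<\eps'<\eps$, apply Ekeland with $\eps'$, and then bound the supremum by $\eps'<\eps$. This is not cosmetic: the strict inequality in \eqref{T12-2}--\eqref{T12-5} (with slack below $\eps$) is exactly what the proof of Theorem~\ref{T17} exploits to absorb the error terms coming from the fuzzy sum rule. With that one adjustment your proof coincides with the paper's.
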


The short proof below encapsulates the traditional arguments used in the primal space part of the proof of all extremality/stationarity statements for collections of sets.

\begin{proof}
Let $\la>0$, $\eta>0$, and a number $\eps'$ satisfy
\begin{gather}\label{T12P1}
\varphi\Big(\max_{1\le i\le n-1}\norm{a_i}\Big) -\varphi(d(\Omega_1-a_1,\ldots,\Omega_{n-1}-a_{n-1},\Omega_n))<\eps'<\eps.
\end{gather}
Consider a continuous function $f:X^n\to\R_+$:
\begin{gather}\label{f}
f(u_1,\ldots,u_{n}):=\varphi\Big(\max\limits_{1\le{i}\le{n}-1} \norm{u_n+a_i-u_i}\Big),
\quad
u_1,\ldots,u_n\in X.
\end{gather}
It follows from \eqref{P10-1} and \eqref{T12P1} that
\begin{gather*}
\label{C4.P3P4}
f(\bx,\ldots,\bx)=\varphi\Big(\max_{1\le i\le n-1}\norm{a_i}\Big) <\varphi(d(\Omega_1-a_1,\ldots,\Omega_{n-1}-a_{n-1},\Omega_n))+\eps',
\end{gather*}
and condition \eqref{12} is satisfied.
Applying
%the Ekeland Variational Principle (
Lemma~\ref{evp}
to the restriction of the function $f$ to the complete metric space $\Omega_1\times\ldots\times \Omega_n$ with the metric induced by the norm \eqref{pnorm},
we find points $\omega_i\in \Omega_i\cap \B_\la(\bx)$ ($i=1,\ldots,n-1$) and $\omega_n\in\Omega_n\cap \B_\eta(\bx)$ such that
\begin{gather*}
%\label{P4.1P5'}
0<f(\omega_{1},\ldots,\omega_n)\le f(\bx,\ldots,\bx),\\
%\label{P4.1P5}
f(u_1,\ldots,u_n)+\eps' \norm{(u_1-\omega_1,\ldots,u_n-\omega_n)}_{\la,\eta}\ge f(\omega_1,\ldots,\omega_n)
\end{gather*}
for all $u_i\in \Omega_i$ ($i=1,\ldots,n$).
In view of the monotonicity of $\varphi$, the last two inequalities imply conditions \eqref{T12-2} and \eqref{T12-3}.
Condition \eqref{T12-2} obviously yields \eqref{T12-4}.
If $\varphi$ is differentiable at $\max_{1\le{i}\le{n}-1} \norm{\omega_n+a_i-\omega_i}$, then condition \eqref{T12-4} implies \eqref{T12-5} thanks to Lemma~\ref{L2}.
\end{proof}

\begin{remark}\label{R13}
\begin{enumerate}
\item
The expressions in the \LHS s of the inequalities \eqref{T12-2} and \eqref{T12-4} are the main ingredients of the, respectively, global and local slopes of the restriction of the function $f$ given by \eqref{f} to the complete metric space ${\Omega_1\times\ldots\times \Omega_n}$ with the metric induced by the norm \eqref{pnorm} (cf. \cite{Kru15}). %and
%in view of the signs of the inequalities,
%can be equivalently replaced in these inequalities by the respective slopes.
This observation justifies the name `slope conditions' adopted in the current paper for this type of estimates as well as the reference to Lemma~\ref{L2} in the proof of Theorem~\ref{T12}.
\item
The functions in the \LHS s of the inequalities \eqref{T12-2}, \eqref{T12-4} and \eqref{T12-5} are computed at some points
$\omega_i\in\Omega_i$ $(i=1,\ldots,n)$ in a \nbh\ of the reference point $\bx$.
The size of the \nbh s is controlled by the parameters $\la$ and $\eta$, which can be chosen arbitrarily small.
%Note that both the left and the \RHS s of \eqref{T12-2}, \eqref{T12-4} and \eqref{T12-5} also depend on $\la$ and $\eta$, and decreasing their values weakens these conditions.
However, decreasing these parameters weakens  conditions \eqref{T12-2}, \eqref{T12-4} and \eqref{T12-5}.
Note also that the \nbh s for $\omega_i$ $(i=1,\ldots,n-1)$ on one hand, and $\omega_n$ on the other hand are controlled by different parameters.
This reflects the fact that condition \eqref{P10-1} is not symmetric.
\item
It is easy to see that the conditions under $\sup$ in \eqref{T12-2} and under $\limsup$ in \eqref{T12-4} and \eqref{T12-5} can be complemented by the inequality
$\max_{1\le{i}\le{n}-1} \norm{u_n+a_i-u_i}<\max_{1\le{i}\le{n}-1} \norm{\omega_n+a_i-\omega_i}$
with the convention that the supremum over the empty set equals $0$.
\item
Condition \eqref{T12-3} relates points $\omega_{1},\ldots,\omega_{n}$ to the given vectors $a_1,\ldots,a_n$ and complements the other conditions in Theorem~\ref{T12} on the choice of these points.
The smaller
%the norms of
these vectors are, the more binding condition \eqref{T12-3} is; for instance, it follows from conditions $\omega_i\in{B}_\la(\bx)$ $(i=1,\ldots,n-1)$ and $\omega_n\in \B_\eta(\bx)$ that, for each $i=1,\ldots,n-1$, $\|\omega_n-\omega_i\|<\la+\eta$, while condition \eqref{T12-3} gives an alternative estimate: $\|\omega_n-\omega_i\|<2\max_{1\le i \le n-1}\norm{a_i}$, which obviously `outperforms' the first one when $\max_{1\le i \le n-1}\norm{a_i}<(\la+\eta)/2$.
\sloppy
\item
Condition \eqref{T12-1} in Theorem~\ref{T12} can obviously be replaced by a simpler (though stronger!) condition $\varphi\left(\max_{1\le i\le n-1}\norm{a_i}\right)<\eps$.
This weakened version of Theorem~\ref{T12} is sufficient for characterizing the conventional extremality/stationarity properties in Definition~\ref{D8}.
One can go even further and require simply that $\max_{1\le i\le n-1}\norm{a_i}<\eps$.
Of course, in this case $\eps$ in the inequalities \eqref{T12-2}, \eqref{T12-4} and \eqref{T12-5} must be replaced by $\varphi(\eps)$.
This creates an interesting phenomenon: $\varphi$ disappears completely from the assumptions of Theorem~\ref{T12} and remains only in its conclusions (which must hold true for any $\varphi\in\mathcal{C}$!)
The importance of the full version of a condition of the type \eqref{T12-1} for some applications was demonstrated in \cite{ZheNg06,ZheNg11}.
\end{enumerate}
\end{remark}

Theorem~\ref{T12} gives
%nonlinear
primal (slope) necessary conditions for the asymmetric non-intersection property \eqref{P10-1}, which is a special case of the
key
%non-intersection
property \eqref{D1-1} in the definition of extremality.
Now observe that necessary conditions for even more general than \eqref{D1-1} symmetric `local' non-intersection property \eqref{D1-3} can be straightforwardly deduced from Theorem~\ref{T12}.
It is sufficient to add to the given collection of $n$ sets a closed ball $\overline{\B}_\eta(\bx)\subset \B_\rho(\bx)$ and apply Theorem~\ref{T12}.

\begin{theorem}\label{T14}
Let $\Omega_1,\ldots,\Omega_n$ be closed subsets of a Banach space $X$,
$\bx\in\cap_{i=1}^n\Omega_i$, $a_i\in{X}$ $(i=1,\ldots,n)$, $\eps>0$, $\rho\in]0,+\infty]$ and $\varphi\in\mathcal{C}$.
Suppose that condition \eqref{D1-3} is satisfied and
\begin{gather}\label{T14-1}
\varphi\Big(\max_{1\le i\le n}\norm{a_i}\Big) <\varphi(d(\Omega_1-a_1,\ldots,\Omega_{n}-a_{n}, {B}_\eta(\bx)))+\eps,
\end{gather}
where $d(\cdot)$ is given by \eqref{d}.
%(or simply $\varphi\left(\max_{1\le i\le n} \norm{a_i}\right)<\eps$).
Then, for any $\la>0$ and $\eta\in]0,\rho[$,
there exist points
$\omega_i\in\Omega_i\cap{B}_\la(\bx)$ $(i=1,\ldots,n)$ and $x\in \B_\eta(\bx)$ such that
\begin{gather}\label{T14-2}
\sup_{\substack{u_i\in\Omega_i\;(i=1,\ldots,n),\,u\in \B_\eta(\bx)\\ (u_1,\ldots,u_n,u)\ne(\omega_1,\ldots,\omega_n,x)}} \frac{\varphi\Big(\max\limits_{1\le{i}\le{n}} \norm{x+a_i-\omega_i}\Big) -\varphi\Big(\max\limits_{1\le{i}\le{n}} \norm{u+a_i-u_i}\Big)} {\norm{(u_1-\omega_1,\ldots,u_n-\omega_n,u-x)}_{\la,\eta}} <\eps,
\\\label{T14-3}
0<\max_{1\le{i}\le{n}}\|x+a_i-\omega_i\|\le \max_{1\le i\le n}\norm{a_i}.
\end{gather}
As a consequence,
\begin{gather}\label{T14-4}
\limsup_{\substack{\Omega_i\ni u_i\to\omega_i\; (i=1,\ldots,n),\,u\to x\\ (u_1,\ldots,u_n,u)\ne(\omega_1,\ldots,\omega_n,x)}} \frac{\varphi\Big(\max\limits_{1\le{i}\le{n}} \norm{x+a_i-\omega_i}\Big) -\varphi\Big(\max\limits_{1\le{i}\le{n}} \norm{u+a_i-u_i}\Big)} {\norm{(u_1-\omega_1,\ldots,u_n-\omega_n,u-x)}_{\la,\eta}} <\eps.
\end{gather}
Moreover, if $\varphi$ is differentiable at $\max_{1\le{i}\le{n}} \norm{x+a_i-\omega_i}$, then
\begin{multline}\label{T14-5}
\varphi'\Big(\max\limits_{1\le{i}\le{n}} \norm{x+a_i-\omega_i}\Big)
\\
\times\limsup_{\substack{\Omega_i\ni u_i\to\omega_i\; (i=1,\ldots,n),\,u\to x\\ (u_1,\ldots,u_n,u)\ne(\omega_1,\ldots,\omega_n,x)}} \frac{\max\limits_{1\le{i}\le{n}} \norm{x+a_i-\omega_i} -\max\limits_{1\le{i}\le{n}} \norm{u+a_i-u_i}} {\norm{(u_1-\omega_1,\ldots,u_n-\omega_n,u-x)}_{\la,\eta}} <\eps,
\end{multline}
with the convention $0\cdot(+\infty)=0$.
\end{theorem}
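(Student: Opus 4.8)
The plan is to obtain Theorem~\ref{T14} as a direct corollary of Theorem~\ref{T12} by enlarging the collection, exactly as hinted in the text preceding the statement. Given $\Omega_1,\ldots,\Omega_n$, I would set $\Omega_{n+1}:=\overline{\B}_\eta(\bx)$ and apply Theorem~\ref{T12} to the collection $\{\Omega_1,\ldots,\Omega_n,\Omega_{n+1}\}$ of $n+1$ \emph{closed} sets, with the vectors $a_1,\ldots,a_n$ serving as the $n$ translations, so that $\Omega_{n+1}$ is the untranslated ``last'' set. The reference point $\bx$ belongs to $\cap_{i=1}^{n+1}\Omega_i$, and since $\eta<\rho$ we have $\Omega_{n+1}\subset\B_\rho(\bx)$; hence condition \eqref{D1-3} gives $\bigcap_{i=1}^{n}(\Omega_i-a_i)\cap\Omega_{n+1}=\es$, which is precisely condition \eqref{P10-1} for the enlarged collection.

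Next I would match the quantitative hypothesis. The claim is that \eqref{T14-1} is exactly hypothesis \eqref{T12-1} written for the enlarged collection. The only delicate point is that \eqref{T14-1} is stated with the \emph{open} ball $\B_\eta(\bx)$, whereas the nonintersect index \eqref{d} of the enlarged collection involves the \emph{closed} ball $\overline{\B}_\eta(\bx)=\Omega_{n+1}$. But $\overline{\B}_\eta(\bx)$ is the closure of $\B_\eta(\bx)$, the sets $\Omega_i-a_i$ are already closed, and the function $(u_1,\ldots,u_n,v)\mapsto\max_{1\le i\le n}\norm{v-u_i}$ that defines \eqref{d} is continuous; therefore its infimum over $(\Omega_1-a_1)\times\cdots\times(\Omega_n-a_n)\times\B_\eta(\bx)$ coincides with its infimum over the closure of that set, i.e.\ $d(\Omega_1-a_1,\ldots,\Omega_n-a_n,\B_\eta(\bx))=d(\Omega_1-a_1,\ldots,\Omega_n-a_n,\overline{\B}_\eta(\bx))$. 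So \eqref{T14-1} and \eqref{T12-1} for the enlarged collection are literally the same inequality. (This also explains why one must take the \emph{closed} ball in the construction: Theorem~\ref{T12} needs the sets to be closed so that the Ekeland principle can be applied on the complete metric space $\Omega_1\times\cdots\times\Omega_{n+1}$.)

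Finally I would fix $\la>0$ and $\eta\in]0,\rho[$, invoke Theorem~\ref{T12} with parameters $\la$ and $\eta$, obtaining points $\omega_i\in\Omega_i\cap\B_\la(\bx)$ $(i=1,\ldots,n)$ and $\omega_{n+1}\in\Omega_{n+1}\cap\B_\eta(\bx)=\B_\eta(\bx)$, and set $x:=\omega_{n+1}$. Relabelling the last coordinate $u_{n+1}$ as $u$ and $\omega_{n+1}$ as $x$, conditions \eqref{T12-2}, \eqref{T12-3}, \eqref{T12-4} and \eqref{T12-5} for the enlarged collection turn into \eqref{T14-2}, \eqref{T14-3}, \eqref{T14-4} and \eqref{T14-5}, up to two harmless remarks: first, in \eqref{T14-2} the supremum ranges over $u\in\B_\eta(\bx)\subset\overline{\B}_\eta(\bx)=\Omega_{n+1}$, hence it is bounded above by the supremum in \eqref{T12-2} and so remains $<\eps$; second, since $x\in\B_\eta(\bx)$ and $\B_\eta(\bx)$ is open, a whole \nbh\ of $x$ lies in $\Omega_{n+1}$, so the constraint ``$\Omega_{n+1}\ni u_{n+1}\to x$'' under the limsup in \eqref{T12-4} and \eqref{T12-5} is vacuous near $x$ and may be dropped, yielding the ``$u\to x$'' of \eqref{T14-4} and \eqref{T14-5}. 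I do not expect any genuine obstacle here: all the work is already done inside Theorem~\ref{T12}, and the only thing requiring vigilance is the open-versus-closed-ball bookkeeping described above.
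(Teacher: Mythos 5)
Your proposal is correct and is precisely the paper's own argument: the authors prove Theorem~\ref{T14} by adjoining the closed ball $\overline{\B}_\eta(\bx)\subset\B_\rho(\bx)$ as an $(n{+}1)$-st set and applying Theorem~\ref{T12} to the enlarged collection. Your extra care with the open-versus-closed-ball bookkeeping (equality of the two nonintersect indices by continuity, and the local vacuity of the constraint $\Omega_{n+1}\ni u\to x$ since $x$ lies in the open ball) fills in details the paper leaves implicit but introduces nothing beyond its stated route.
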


\begin{remark}
The comments concerning Theorem~\ref{T12} made in Remark~\ref{R13} are, with obvious modifications, applicable to Theorem~\ref{T14} and the subsequent statements in this paper.
\end{remark}

All the properties in Definition~\ref{D8} as well as the
%nonlinear
primal necessary conditions for the non-intersection properties in Theorems~\ref{T12} and \ref{T14} presume that the sets have a common point.
Fortunately Theorem~\ref{T12} is rich enough to characterize a non-intersection property without this assumption.
Indeed, if
$\cap_{i=1}^n\Omega_i=\emptyset$,
then, for any points
$\omega_i\in\Omega_i$ ${(i=1,\ldots,n)}$, one can consider the sets $\Omega_i':=\Omega_i-\omega_i$ ${(i=1,\ldots,n)}$, which obviously satisfy $0\in\cap_{i=1}^n \Omega_i'$.
Moreover, after setting $a_i:=\omega_{n}-\omega_{i}$ (${i=1,\ldots,n-1}$), one has
\sloppy
\begin{gather}\label{23}
\bigcap_{i=1}^{n-1}(\Omega_i'-a_i)\cap\Omega_{n}'= \bigcap_{i=1}^{n-1}(\Omega_i-\omega_{n})\cap (\Omega_n-\omega_n)=\bigcap_{i=1}^{n}\Omega_i-\omega_{n}
=\emptyset.
\end{gather}
Thus, Theorem~\ref{T12} is applicable and we immediately arrive at the next statement.
Note that
$d(\Omega'_1-a_1,\ldots,\Omega'_{n-1}-a_{n-1},\Omega'_{n}) =d(\Omega_1,\ldots,\Omega_{n})$.

\begin{proposition}%\label{C4.0}
Let $\Omega_1,\ldots,\Omega_n$ be closed subsets of a Banach space $X$, $\omega_i\in\Omega_i$ ${(i=1,\ldots,n)}$, $\eps>0$ and $\varphi\in\mathcal{C}$.
Suppose that $\cap_{i=1}^n\Omega_i=\emptyset$ and
\begin{gather}\notag%\label{T12-1}
\varphi\Big(\max_{1\le i\le n-1} \norm{\omega_n-\omega_i}\Big) <\varphi(d(\Omega_1,\ldots,\Omega_n))+\eps
\end{gather}
where $d(\cdot)$ is given by \eqref{d}.
%(or simply $\varphi\left(\max_{1\le i\le n-1}\norm{\omega_n-\omega_i}\right)<\eps$).
Then, for any $\la>0$ and $\eta>0$,
there exist points
$\omega_i'\in\Omega_i\cap{B}_\la(\omega_i)$ ${(i=1,\ldots,n-1)}$ and $\omega_n'\in\Omega_n\cap \B_\rho(\omega_n)$ such that
\begin{gather}\notag
\sup_{\substack{u_i\in\Omega_i\;(i=1,\ldots,n)\\ (u_1,\ldots,u_n)\ne(\omega'_1,\ldots,\omega'_n)}} \frac{\varphi\Big(\max\limits_{1\le{i}\le{n}-1} \norm{\omega'_n-\omega'_i}\Big) -\varphi\Big(\max\limits_{1\le{i}\le{n}-1} \norm{u_n-u_i}\Big)} {\norm{(u_1-\omega'_1,\ldots,u_n-\omega'_n)}_{\la,\eta}} <\eps,
\\\label{P16-3}
0<\max_{1\le{i}\le{n}-1}\|\omega'_n-\omega'_i\|\le \max_{1\le i \le n-1}\norm{\omega_n-\omega_i}.
\end{gather}
As a consequence,
\begin{gather*}%\label{T12-4}
\limsup_{\substack{\Omega_i\ni u_i\to\omega'_i\; (i=1,\ldots,n)\\ (u_1,\ldots,u_n)\ne(\omega'_1,\ldots,\omega'_n)}} \frac{\varphi\Big(\max\limits_{1\le{i}\le{n}-1} \norm{\omega'_n-\omega'_i}\Big) -\varphi\Big(\max\limits_{1\le{i}\le{n}-1} \norm{u_n-u_i}\Big)} {\norm{(u_1-\omega'_1,\ldots,u_n-\omega'_n)}_{\la,\eta}} <\eps.
\end{gather*}
Moreover, if $\varphi$ is differentiable at $\max_{1\le{i}\le{n}-1} \norm{\omega'_n-\omega'_i}$, then
\begin{gather*}%\label{C4.0-5}
\varphi'\Big(\max\limits_{1\le{i}\le{n}-1} \norm{\omega'_n-\omega'_i}\Big) \limsup_{\substack{\Omega_i\ni u_i\to\omega'_i\; (i=1,\ldots,n)\\ (u_1,\ldots,u_n)\ne(\omega'_1,\ldots,\omega'_n)}} \frac{\max\limits_{1\le{i}\le{n}-1} \norm{\omega'_n-\omega'_i} -\max\limits_{1\le{i}\le{n}-1} \norm{u_n-u_i}} {\norm{(u_1-\omega'_1,\ldots,u_n-\omega'_n)}_{\la,\eta}} <\eps,
\end{gather*}
with the convention $0\cdot(+\infty)=0$.
\end{proposition}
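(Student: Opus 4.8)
The plan is to obtain this proposition as an immediate corollary of Theorem~\ref{T12}, by translating the sets so that they acquire a common point at the origin. Concretely, I would set $\bx:=0$, introduce the shifted sets $\Omega_i':=\Omega_i-\omega_i$ $(i=1,\ldots,n)$ and the vectors $a_i:=\omega_n-\omega_i$ $(i=1,\ldots,n-1)$, and apply Theorem~\ref{T12} to the collection $\{\Omega_1',\ldots,\Omega_n'\}$ with these $a_i$'s, the same $\eps$ and $\varphi$, and reference point $0$.

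First I would verify the hypotheses of Theorem~\ref{T12} in the new setting. The sets $\Omega_i'$ are closed and, since $\omega_i\in\Omega_i$, contain $0$, so $0\in\cap_{i=1}^n\Omega_i'$. The computation already recorded in \eqref{23} gives $\Omega_i'-a_i=\Omega_i-\omega_n$ for $i\le n-1$ and $\Omega_n'=\Omega_n-\omega_n$, whence $\bigcap_{i=1}^{n-1}(\Omega_i'-a_i)\cap\Omega_n'=\big(\cap_{i=1}^n\Omega_i\big)-\omega_n=\emptyset$; thus $\{\Omega_1',\ldots,\Omega_n'\}$ and the $a_i$'s satisfy condition \eqref{P10-1}. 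For the quantitative hypothesis \eqref{T12-1}, I would observe that $\max_{1\le i\le n-1}\norm{a_i}=\max_{1\le i\le n-1}\norm{\omega_n-\omega_i}$ and that the nonintersect index \eqref{d} is invariant under a common translation of all the sets, so that $d(\Omega_1'-a_1,\ldots,\Omega_{n-1}'-a_{n-1},\Omega_n')=d(\Omega_1-\omega_n,\ldots,\Omega_n-\omega_n)=d(\Omega_1,\ldots,\Omega_n)$. Hence the inequality assumed in the proposition is precisely \eqref{T12-1} for the shifted data.

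Then I would invoke Theorem~\ref{T12}: for the given $\la>0$ and $\eta>0$ it produces points $\widehat\omega_i\in\Omega_i'\cap\B_\la(0)$ $(i=1,\ldots,n-1)$ and $\widehat\omega_n\in\Omega_n'\cap\B_\eta(0)$ satisfying \eqref{T12-2}--\eqref{T12-5}. Setting $\omega_i':=\widehat\omega_i+\omega_i$ yields $\omega_i'\in\Omega_i\cap\B_\la(\omega_i)$ for $i\le n-1$ and $\omega_n'\in\Omega_n\cap\B_\eta(\omega_n)$. To transfer the conclusions, for $v_i\in\Omega_i$ I would write $u_i:=v_i-\omega_i\in\Omega_i'$ and use the identities $u_n+a_i-u_i=v_n-v_i$, $\widehat\omega_n+a_i-\widehat\omega_i=\omega_n'-\omega_i'$ and $u_i-\widehat\omega_i=v_i-\omega_i'$; these turn the quotients and the parametric norm \eqref{pnorm} in \eqref{T12-2}, \eqref{T12-4}, \eqref{T12-5} into those of the proposition (after renaming $v_i$ back to $u_i$), turn the exclusion $(u_1,\ldots,u_n)\ne(\widehat\omega_1,\ldots,\widehat\omega_n)$ into $(v_1,\ldots,v_n)\ne(\omega_1',\ldots,\omega_n')$, and turn \eqref{T12-3} into \eqref{P16-3}; the differentiability assumption for the last display is simply differentiability of $\varphi$ at $\max_{1\le i\le n-1}\norm{\omega_n'-\omega_i'}$.

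I do not anticipate any real obstacle: all the analytic content is already absorbed into the Ekeland variational principle argument inside Theorem~\ref{T12}. The one step that needs care is the bookkeeping of the translation --- checking that the nonintersect index is genuinely unchanged under the shift by $-\omega_n$, and that the three algebraic identities correctly match the running variables, the base points and the weighted norms \eqref{pnorm} between the two statements.
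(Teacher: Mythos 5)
Your proposal is correct and is exactly the argument the paper uses: the paper derives this proposition from Theorem~\ref{T12} by passing to the shifted sets $\Omega_i':=\Omega_i-\omega_i$ with $a_i:=\omega_n-\omega_i$, noting the identity \eqref{23} and the translation invariance $d(\Omega'_1-a_1,\ldots,\Omega'_{n-1}-a_{n-1},\Omega'_n)=d(\Omega_1,\ldots,\Omega_n)$. Your bookkeeping of the substitutions (and your reading of $\B_\eta(\omega_n)$ where the statement has a typographical $\B_\rho$) is accurate.
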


\section{Dual Characterization of Non-Intersection Properties} \label{dual}

In this section, we require the function $\varphi$ to be continuously differentiable.

The dual norm on $(X^*)^{n}$ corresponding to \eqref{pnorm} has the following form:
\begin{gather}\notag%\label{dnorm}
\|(x_1^*,\ldots,x_n^*)\|_{\la,\eta} =\la\sum_{i=1}^{n-1}\|x_i^*\|+\eta\|x_n^*\|,\quad
x^*_1,\ldots,x^*_n\in X^*.
\end{gather}

The next theorem
%is a dual counterpart of Theorem~\ref{T12}, providing
gives
dual necessary conditions for the non-intersection property \eqref{P10-1}.
It combines two statements: in general Banach spaces in terms of Clarke normals and in Asplund spaces in terms of \Fr\ normals, and
generalizes and improves \cite[Theorem~6.1]{BuiKru19}.

\begin{theorem}\label{T17}
Let $\Omega_1,\ldots,\Omega_n$ be closed subsets of a Banach space $X$,
$\bx\in\cap_{i=1}^n\Omega_i$, ${a_i\in{X}}$ $(i=1,\ldots,n-1)$, $\eps>0$ and $\varphi\in\mathcal{C}^1$.
Suppose that conditions \eqref{P10-1} and \eqref{T12-1}
%(or simply $\varphi\left(\max_{1\le i\le n-1}\norm{a_i}\right)<\eps$)
are satisfied.
Then, for any $\la>0$ and $\eta>0$,
\begin{enumerate}
\item
there exist points
$\omega_i\in\Omega_i\cap{B}_\la(\bx)$ $(i=1,\ldots,n-1)$ and $\omega_n\in\Omega_n\cap \B_\eta(\bx)$ satisfying condition \eqref{T12-3}, and vectors
$x_i^*\in X^*$ $(i=1,\ldots,n)$ such that
\begin{gather}
\label{T17-1}
\sum_{i=1}^{n}x_i^*=0,\quad \sum_{i=1}^{n-1}\norm{x_i^*}=1,
\\
\label{T17-2}
\hspace{-4mm} \varphi'\Big(\max_{1\le{i}\le{n}-1} \|\omega_n+a_i-\omega_{i}\|\Big) \Big({\la}\sum_{i=1}^{n-1} d\left(x_i^*, {N}_{\Omega_i}(\omega_i)\right) + {\eta}d\left(x_n^*,{N}_{\Omega_n}(\omega_n)\right)\Big) <\eps,
\\
\label{T17-3}
\sum_{i=1}^{n-1}\ang{x_{i}^*,\omega_n+a_i-\omega_{i}} =\max_{1\le{i}\le{n}-1} \|\omega_n+a_i-\omega_{i}\|,
\end{gather}
where $N$ stands for the Clarke normal cone ${(N=N^C)}$;

\item
if $X$ is Asplund,
then, for any $\tau\in]0,1[$,
there exist points
$\omega_i\in\Omega_i\cap{B}_\la(\bx)$ ${(i=1,\ldots,n-1)}$ and $\omega_n\in\Omega_n\cap \B_\eta(\bx)$ satisfying
\begin{gather}
\label{T17-4}
0<\max_{1\le{i}\le{n}-1} \|\omega_n+a_i-\omega_{i}\| <\varphi\iv(\varphi(d(\Omega_1-a_1,\ldots,\Omega_{n-1}-a_{n-1}, \Omega_n))+\eps),
\end{gather}
and vectors
$x_i^*\in X^*$ $(i=1,\ldots,n)$ satisfying conditions \eqref{T17-1}, \eqref{T17-2}, with $N$ standing for the \Fr\ normal cone ${(N=N^F)}$, and
\begin{gather}
\label{T17-5}
\sum_{i=1}^{n-1}\ang{x_{i}^*,\omega_n+a_i-\omega_{i}} >\tau\max_{1\le{i}\le{n}-1} \|\omega_n+a_i-\omega_{i}\|.
\end{gather}
\end{enumerate}
\end{theorem}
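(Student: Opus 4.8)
The plan is to derive both parts of Theorem~\ref{T17} from the primal (slope) estimate established in Theorem~\ref{T12}, by converting a slope inequality into a subdifferential inclusion and then applying the appropriate subdifferential sum rule together with the representation of $\sd f$ in Lemma~\ref{L6}. First I would fix $\la>0$ and $\eta>0$, apply Theorem~\ref{T12} (noting that conditions \eqref{P10-1} and \eqref{T12-1} are exactly its hypotheses) to obtain points $\omega_i\in\Omega_i\cap B_\la(\bx)$ $(i=1,\ldots,n-1)$ and $\omega_n\in\Omega_n\cap B_\eta(\bx)$ satisfying \eqref{T12-3}, and hence the local slope estimate \eqref{T12-4}; in the Asplund case I would instead invoke the consequence that $\max_{1\le i\le n-1}\|\omega_n+a_i-\omega_i\|$ can be taken strictly below $\varphi\iv(\varphi(d(\cdots))+\eps)$, which gives \eqref{T17-4}. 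The point $\bar u:=(\omega_1,\ldots,\omega_n)$ is, by \eqref{T12-4}, an approximate minimizer (in the slope sense, with constant $\eps$ in the norm $\|\cdot\|_{\la,\eta}$) of the restriction to $\Omega_1\times\cdots\times\Omega_n$ of the function $g:=\varphi\circ f$, where $f$ is given by \eqref{f}. Since $\varphi\in\mathcal C^1$ and $f(\bar u)>0$ by \eqref{T12-3}, $g$ is locally the composition of a smooth increasing function with the convex Lipschitz function $f$; writing the constrained problem as minimization of $g+i_{\Omega_1}+\cdots+i_{\Omega_n}$ on $X^n$, the slope bound \eqref{T12-4} yields (via the standard fact that a small local slope forces a correspondingly small subgradient, using the definition of $\partial^C$, resp. a fuzzy argument in the Asplund case) an element of the subdifferential of the sum of norm at most $\eps$ with respect to $\|\cdot\|_{\la,\eta}^*$.

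For part~(i) I would apply the Clarke--Rockafellar sum rule (Lemma~\ref{SR}(ii)) to $g+\sum_i i_{\Omega_i}$: the first summand $g$ is Lipschitz near $\bar u$ and each $i_{\Omega_i}$ is lsc, so there exist $y^*\in\partial^C g(\bar u)$ and $z_i^*\in N^C_{\Omega_i}(\omega_i)$ with $y^*+(z_1^*,\ldots,z_n^*)$ of dual norm $<\eps$. The Clarke chain rule for $g=\varphi\circ f$ gives $\partial^C g(\bar u)=\varphi'(f(\bar u))\,\partial^C f(\bar u)$, and since $f$ is convex, $\partial^C f(\bar u)=\partial f(\bar u)$ is described by Lemma~\ref{L6}: writing $y^*=\varphi'(\max_i\|\omega_n+a_i-\omega_i\|)(x_1^*,\ldots,x_n^*)$ with $(x_i^*)$ in the set on the right of \eqref{L6-2}, we immediately read off \eqref{T17-1} and \eqref{T17-3}. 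The remaining inequality \eqref{T17-2} is obtained by unpacking the bound $\|y^*+(z_1^*,\ldots,z_n^*)\|_{\la,\eta}^*<\eps$ using the explicit form $\|(x_1^*,\ldots,x_n^*)\|_{\la,\eta}^*=\la\sum_{i=1}^{n-1}\|x_i^*\|+\eta\|x_n^*\|$ and the estimates $d(\varphi'(\cdot)x_i^*,N^C_{\Omega_i}(\omega_i))\le\|\varphi'(\cdot)x_i^*-(-z_i^*)\|$ (using that normal cones are cones), giving $\varphi'(\cdot)(\la\sum_{i=1}^{n-1}d(x_i^*,N^C_{\Omega_i}(\omega_i))+\eta d(x_n^*,N^C_{\Omega_n}(\omega_n)))<\eps$, which is \eqref{T17-2}.

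For part~(ii), in the Asplund setting I would replace the exact Clarke sum rule by the fuzzy sum rule (Lemma~\ref{SR}(iii)) applied to a Fr\'echet subgradient of $g+\sum_i i_{\Omega_i}$ at (a point near) $\bar u$ coming from \eqref{T12-4}: this produces points $\omega_i'$ close to $\omega_i$ (which we rename $\omega_i$, keeping \eqref{T17-4} intact by shrinking $\la,\eta$ slightly at the outset), Fr\'echet subgradients $v_i^*\in\partial^F i_{\Omega_i}(\omega_i)=N^F_{\Omega_i}(\omega_i)$, and a Fr\'echet subgradient of $g$ at a nearby point. Here lies the one genuine subtlety: the fuzzy sum rule moves the base point of $\partial^F f$, so the representation of Lemma~\ref{L6} applies at a perturbed point $(\omega_1',\ldots,\omega_n')$ rather than at $(\omega_1,\ldots,\omega_n)$, and the exact equality \eqref{T17-3} degrades to the inequality \eqref{T17-5} with an arbitrary constant $\tau\in]0,1[$ — one chooses the fuzziness parameter in Lemma~\ref{SR}(iii) small enough (using continuity of the linear map $(u_1,\ldots,u_n)\mapsto(u_n+a_i-u_i)_i$ and of $\varphi'$, together with the fact that $f(\bar u)>0$ so one stays in the region where $\varphi'$ is positive and continuous) so that $\sum_{i=1}^{n-1}\ang{x_i^*,\omega_n+a_i-\omega_i}>\tau\max_{1\le i\le n-1}\|\omega_n+a_i-\omega_i\|$. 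The main obstacle, then, is precisely this bookkeeping in the Asplund case: tracking how the perturbations introduced by the fuzzy sum rule propagate through the chain rule $\partial^F(\varphi\circ f)\subset\varphi'(\cdot)\partial^F f(\cdot)+\text{(small ball)}$ and through Lemma~\ref{L6}, and choosing all the perturbation radii in the right order so that \eqref{T17-1}, \eqref{T17-2} (now with $N^F$), \eqref{T17-4} and \eqref{T17-5} hold simultaneously; everything else is a routine translation of a slope bound into a subgradient bound.
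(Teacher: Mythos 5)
Your plan is correct and follows the same architecture as the paper's proof: apply Theorem~\ref{T12} with a margin $\eps'<\eps$, convert the resulting slope estimate into the inclusion $0\in\partial(\cdot)$ at $(\omega_1,\ldots,\omega_n)$, decompose via the Clarke--Rockafellar sum rule (Banach case) or the fuzzy plus convex sum rules (Asplund case), read off \eqref{T17-1} and \eqref{T17-3}/\eqref{T17-5} from Lemma~\ref{L6} and Lemma~\ref{L2.4}, and absorb the perturbations into the distance terms of \eqref{T17-2}. The one place where you genuinely deviate is that you keep the composite $g=\varphi\circ f$ as the function being subdifferentiated and then invoke a chain rule $\partial(\varphi\circ f)\subset\varphi'(f(\cdot))\,\partial f(\cdot)$ (for both Clarke and \Fr\ subdifferentials), which is standard for a $C^1$ increasing outer function composed with a Lipschitz inner function but is not among the tools the paper provides, and in the Asplund case forces you to track extra fuzziness through that chain rule. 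The paper sidesteps this entirely: since $f(\omega_1,\ldots,\omega_n)>0$ by \eqref{T12-3}, one has $\varphi'(f(\omega_1,\ldots,\omega_n))>0$, so \eqref{T12-5} can be divided by this derivative, placing the factor $\eps'/\varphi'(f(\omega_1,\ldots,\omega_n))$ into the coefficient of the norm-penalty term $f_1$; the inclusion then reads $0\in\partial(f+f_1+f_2)$ with $f$ the \emph{convex} function \eqref{f0}, so only the three stated sum rules and the exact representation \eqref{L6-2} are needed, and $\varphi'$ reappears only when \eqref{T17-2} is assembled at the end. Both routes succeed; the paper's normalization trick buys a cleaner Asplund-case bookkeeping (the convex sum rule applies exactly to $f+f_1$ at the perturbed point $x$, rather than an approximate chain rule), while your version is closer in spirit to a direct ``Fermat rule for the composite''. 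One small correction: preserving \eqref{T17-4} under the fuzzy perturbation is achieved not by shrinking $\la,\eta$ but by the strict margin $\eps-\eps'$ together with continuity of $(u_1,\ldots,u_n)\mapsto\max_i\|u_n+a_i-u_i\|$, which is how the paper chooses $\xi$.
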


The proof below
employs the primal necessary conditions in Theorem~\ref{T12}, and
encapsulates the traditional arguments involving appropriate subdifferential sum rules, used in the dual space part of the proof of all extremality/stationarity statements for collections of sets.

\begin{proof}
Choose a number $\eps'$ satisfying condition \eqref{T12P1}.
By Theorem~\ref{T12},
there exist points
$\omega_i\in\Omega_i\cap{B}_\la(\bx)$ $(i=1,\ldots,n-1)$ and $\omega_n\in\Omega_n\cap \B_\eta(\bx)$ satisfying condition \eqref{T12-3} such that condition \eqref{T12-5} holds with $\eps'$ in place of $\eps$.
The last condition yields
\begin{equation}
\label{T17P1}
0\in\partial(f+f_1+f_2)(\omega_1,\ldots,\omega_n),
\end{equation}
where ${\sd:=\sd^F}$, $f$ is given by \eqref{f0} and, for all $u_1,\ldots,u_n\in X$,
\begin{align*}
f_1(u_1,\ldots,u_{n}):=
&\frac{\eps'} {\varphi'(f(\omega_1,\ldots,\omega_{n}))} \norm{(u_1- \omega_1,\ldots,u_n- \omega_n)}_{\la,\eta},
\\\notag
f_2(u_1,\ldots,u_{n}):=&
\begin{cases}
0&\mbox{if } u_i\in\Omega_i\; (i=1,\ldots,n),
\\
+\infty&\mbox{otherwise}.
\end{cases}
\end{align*}
Functions $f$ and $f_1$ are convex and Lipschitz continuous, and $f_2$ is lower semicontinuous.
From this point we split the proof into two cases.

(i)
$X$ is a general Banach space.
Condition \eqref{T17P1} obviously holds with ${\sd:=\sd^C}$.
By
%the Clarke--Rocka\-fellar subdifferential sum rule (
Lemma~\ref{SR}(ii), there exist three subgradients:  $(v_{1}^*,\ldots,v_{n}^*)\in \sd{f}(\omega_{1},\ldots,\omega_{n})$, $(v_{11}^*,\ldots,v_{1n}^*)\in \sd{f}_1(\omega_{1},\ldots,\omega_{n})$ and ${(v_{21}^*,\ldots,v_{2n}^*)\in \sd^C{f}_2(\omega_{1},\ldots,\omega_{n})}$ such that
$v_{i}^*+v_{1i}^*+v_{2i}^*=0$ ${(i=1,\ldots,n)}$.
Observe that $f(\omega_{1},\ldots,\omega_{n})>0$
(thanks to \eqref{T12-3}), and
$f_2$ is the indicator function of the set $\Omega_1\times\ldots\times\Omega_n$. Hence, by Lemmas~\ref{L2.4} and \ref{L6},
\begin{gather}
\label{T17P3}
\sum_{i=1}^{n}v_{i}^*=0,
\quad
\sum_{i=1}^{n-1}\|v_{i}^*\|=1,
\\\label{T17P4}
\sum_{i=1}^{n-1}\ang{v_{i}^*,\omega_{i}-a_i-\omega_n} =\max_{1\le{i}\le{n-1}} \norm{\omega_{i}-a_i-\omega_n},
\\\label{T17P5}
\varphi'(f(\omega_1,\ldots,\omega_{n})) \Big(\la\sum_{i=1}^{n-1}\norm{v_{1i}^*} +{\eta}\norm{v_{1n}^*}\Big) \le\eps',
\end{gather}
and
$v_{2i}^*\in{N}^C_{\Omega_i}(\omega_i)$ $(i=1,\ldots,n)$.
Setting
$x_i^*:=-v_{i}^*$ $(i=1,\ldots,{n})$, we immediately obtain conditions \eqref{T17-1} and \eqref{T17-3}.
Moreover,
\begin{align*}
{\la}\sum_{i=1}^{n-1} d\left(x_i^*, {N}^C_{\Omega_i}(\omega_i)\right)+ {\eta}d\left(x_n^*,{N}^C_{\Omega_n}(\omega_n)\right)
&\le{\la}\sum_{i=1}^{n-1}\|v_{i}^*+v^*_{2i}\|+ {\eta}\|v_{n}^*+v^*_{2n}\|
\\
&={\la}\sum_{i=1}^{n-1}\|v_{1i}^*\|+ {\eta}\norm{v^*_{1n}},
\end{align*}
and condition \eqref{T17-2} is a consequence of \eqref{T17P5}.

(ii)
Let $X$ be Asplund, and
$\tau\in]0,1[$.
In view of \eqref{T17P1}, we can apply
%the fuzzy sum rule (
Lemma~\ref{SR}(iii) to the sum of $f+f_1$ and $f_2$ followed by
%the convex sum rule (
Lemma~\ref{SR}(i) applied to the sum of $f$ and $f_1$:
for any ${\xi>0}$, there are points $x_{i}\in X$ and $\omega_i'\in\Omega_i$ $(i=1,\ldots,n)$
and subgradients $(v_{1}^*,\ldots,v_{n}^*)\in \sd{f}(x_{1},\ldots,x_{n})$, $(v_{11}^*,\ldots,v_{1n}^*)\in \sd{f}_1(x_{1},\ldots,x_{n})$ and $(v_{21}^*,\ldots,v_{2n}^*)\in \sd^F{f}_2(\omega_{1}',\ldots,\omega_{n}')$ such that
\begin{gather*}%\label{C4.T7P6}
\max_{1\le i\le n}\|x_i-\omega_i\|<\xi,\quad \max_{1\le i\le n}\|\omega_i'-\omega_i\|<\xi,
\quad
\sum_{i=1}^{n} \norm{v_{i}^*+v_{1i}^*+v_{2i}^*}<\xi.
\end{gather*}
The number $\xi$ can be chosen small enough so that $\omega_i'\in \B_{\la}(\bar x)$ $(i=1,\ldots,n-1)$, ${\omega_n'\in \B_{\eta}(\bar x)}$, ${f}(x_{1},\ldots,x_{n})>0$, ${f}(\omega'_{1},\ldots,\omega'_{n})>0$, conditions \eqref{T17-4} and \eqref{T17-5} are satisfied with $\omega_i'$ in place of $\omega_i$ $(i=1,\ldots,n)$, and, taking into account the continuity of $f$ and~$\varphi'$,
\begin{equation*}
%\label{C4.T7P7}
\frac{\eps'}{\varphi'\left(f(x_1,\ldots,x_{n})\right)} +\max\{\la,\eta\}\xi <\frac{\eps} {\varphi'\left(f(\omega'_1,\ldots,\omega'_{n})\right)}.
\end{equation*}
By Lemmas~\ref{L2.4} and \ref{L6}, vectors $(v_{1}^*,\ldots,v_{n}^*)$ and $(v_{11}^*,\ldots,v_{1n}^*)$ satisfy conditions \eqref{T17P3}, \eqref{T17P4} and \eqref{T17P5} with $x_i$ in place of $\omega_i$ $(i=1,\ldots,n)$, and
$v_{2i}^*\in{N}^F_{\Omega_i}(\omega_i')$ $(i=1,\ldots,n)$.
Set $x_i^*:=-v_{i}^*$ $(i=1,\ldots,{n})$.
Conditions \eqref{T17-1} follow immediately.
Moreover,
\begin{align*}
{\la}\sum_{i=1}^{n-1} d\left(x_i^*, {N}^F_{\Omega_i}(\omega_i')\right) &+ {\eta}d\left(x_n^*,{N}^F_{\Omega_n}(\omega_n')\right)
\le{\la}\sum_{i=1}^{n-1}\|v_{i}^*+v^*_{2i}\|+ {\eta}\|v_{n}^*+v^*_{2n}\|
\\
&\le{\la}\sum_{i=1}^{n-1}\|v_{1i}^*\|+ {\eta}\norm{v^*_{1n}}+\max\{\la,\eta\}\sum_{i=1}^{n} \norm{v_{i}^*+v_{1i}^*+v_{2i}^*}
\\
&\le\frac{\eps'} {\varphi'\left(f(x_1,\ldots,x_{n})\right)}+\max\{\la,\eta\}\xi <\frac{\eps} {\varphi'\left(f(\omega'_1,\ldots,\omega'_{n})\right)},
\end{align*}
i.e. condition \eqref{T17-2} is satisfied with $\omega_i'$ in place of $\omega_i$ $(i=1,\ldots,n)$.
\end{proof}

\begin{remark}\label{R18}
\begin{enumerate}
\item
Inequality \eqref{T17-2} together with the first equality in \eqref{T17-1} play the key role in asymmetric dual necessary conditions for extremality/stationarity properties.
The inequality ensures that the dual vectors $x_1^*,\ldots,x_n^*$, whose sum is zero, are close to the corresponding normal cones.
The second equality in \eqref{T17-1} is the normalization condition for the collection of dual vectors; it ensures that the conditions are nontrivial.

\item
%Note that,
When $\varphi$ is linear, the \LHS\ of \eqref{T17-2} is independent of the vectors $a_1,\ldots,a_{n-1}$.

\item
Conditions \eqref{T17-3} and \eqref{T17-5} first appeared explicitly in \cite{ZheNg06} and were explored further in \cite{ZheNg11,BuiKru19}.
Conditions of this type relate dual vectors $x_i^*$ and primal space vectors $\omega_{n}+a_i-\omega_{i}$ ($i=1,\ldots,n-1$), and allow one to reduce the number of dual vectors involved in checking dual necessary conditions for extremality/stationarity properties.
Such conditions also play an important role in characterizations of intrinsic transversality \cite{ThaBuiCuoVer20}.

\item
Primal
%space
conditions \eqref{T12-3} and \eqref{T17-4} provide additional necessary conditions for the non-intersection properties (cf. Remark~\ref{R13}(iv)).
They have not been used in this context before.

\item
Condition \eqref{T17-2} with \Fr\ normal cones is obviously stronger than its version with Clarke normal cones.
On the other hand, conditions \eqref{T17-4} and \eqref{T17-5} in the second part of Theorem~\ref{T17} are weaker than the corresponding conditions \eqref{T12-3} and \eqref{T17-3} in its first part.
This is because of the fuzzy sum rule used in its proof.

\item
Clarke normal cones in part (i) of Theorem~\ref{T17} and the other dual space characterizations in this paper can be replaced by Ioffe's \emph{$G$-normal cones} \cite{Iof17}. %corresponding to the \emph{approximate $G$-sub\-differentials}, which, similar to the Clarke ones, possess an exact sum rule in general Banach spaces; cf. \cite[Theorem~4.69]{Iof17}.
\end{enumerate}
\end{remark}

As in Section~\ref{Slope}, necessary conditions for the more general than \eqref{P10-1} symmetric local non-inter\-section property \eqref{D1-3} can be straightforwardly deduced from Theorem~\ref{T17} by using the same simple trick:
adding to the given collection of $n$ sets a closed ball $\overline{\B}_\eta(\bx)\subset \B_\rho(\bx)$.
The next statement generalizes and improves \cite[Theorem~6.3]{BuiKru19}.

\begin{theorem}\label{T19}
Let $\Omega_1,\ldots,\Omega_n$ be closed subsets of a Banach space $X$,
$\bx\in\cap_{i=1}^n\Omega_i$, ${a_i\in{X}}$ ${(i=1,\ldots,n)}$, $\rho\in]0,+\infty]$, $\eps>0$ and $\varphi\in\mathcal{C}^1$.
Suppose that conditions \eqref{D1-3} and \eqref{T14-1}
%(or simply $\varphi\left(\max_{1\le i\le n}\norm{a_i}\right)<\eps$)
are satisfied.
Then, for any $\la>0$ and $\eta\in]0,\rho[$,
\begin{enumerate}
\item
there exist points
$\omega_i\in\Omega_i\cap{B}_\la(\bx)$ $(i=1,\ldots,n)$ and $x\in \B_\eta(\bx)$ satisfying condition \eqref{T14-3}, and vectors
$x_i^*\in X^*$ $(i=1,\ldots,n)$ satisfying $\sum_{i=1}^{n}\norm{x_i^*}=1$ and
\begin{gather}
\label{T19-2}
\varphi'\left(\max_{1\le{i}\le{n}} \|x+a_i-\omega_{i}\|\right) \left({\la}\sum_{i=1}^{n} d\left(x_i^*, {N}_{\Omega_i}(\omega_i)\right) + {\eta}\norm{\sum_{i=1}^n x_i^*}\right)<\eps,
\\
\label{T19-3}
\sum_{i=1}^{n}\ang{x_{i}^*,x+a_i-\omega_{i}} =\max_{1\le{i}\le{n}} \|x+a_i-\omega_{i}\|,
\end{gather}
where $N$ stands for the Clarke normal cone ${(N=N^C)}$;
\item
if $X$ is Asplund,
then, for any $\tau\in]0,1[$,
there exist points
$\omega_i\in\Omega_i\cap{B}_\la(\bx)$ $(i=1,\ldots,n)$ and $x\in \B_\eta(\bx)$ satisfying
\begin{gather}
\label{T19-4}
0<\max_{1\le{i}\le{n}} \|x+a_i-\omega_{i}\| <\varphi\iv(\varphi(d(\Omega_1-a_1,\ldots,\Omega_{n}-a_{n}, {B}_\eta(\bx)))+\eps),
\end{gather}
and vectors
$x_i^*\in X^*$ $(i=1,\ldots,n)$ satisfying $\sum_{i=1}^{n}\norm{x_i^*}=1$, condition \eqref{T19-2} with $N$ standing for the \Fr\ normal cone ${(N=N^F)}$, and
\begin{gather}
\label{T19-5}
\sum_{i=1}^{n}\ang{x_{i}^*,x+a_i-\omega_{i}} >\tau\max_{1\le{i}\le{n}}
\|x+a_i-\omega_{i}\|.
\end{gather}
\end{enumerate}
\end{theorem}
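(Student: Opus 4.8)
\textbf{Proof strategy for Theorem~\ref{T19}.}
The plan is to deduce Theorem~\ref{T19} from Theorem~\ref{T17} by the same enlargement device that turns Theorem~\ref{T12} into Theorem~\ref{T14}: append to the collection the closed ball $\Omega_{n+1}:=\overline{\B}_\eta(\bx)$ and treat it as the distinguished ``untranslated'' set, keeping the translation vectors $a_1,\ldots,a_n$. Fix $\la>0$ and $\eta\in]0,\rho[$. Two preliminary observations make the hypotheses of Theorem~\ref{T17} available for the collection of $n+1$ closed sets $\Omega_1,\ldots,\Omega_n,\Omega_{n+1}$. First, since $\eta<\rho$ we have $\Omega_{n+1}\subset\B_\rho(\bx)$, so \eqref{D1-3} gives $\bigcap_{i=1}^{n}(\Omega_i-a_i)\cap\Omega_{n+1}=\es$, which is precisely condition \eqref{P10-1} for the enlarged collection. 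Second, for fixed $v_i\in\Omega_i$ $(i=1,\ldots,n)$ the map $u\mapsto\max_{1\le i\le n}\|u-(v_i-a_i)\|$ is continuous and $\overline{\B}_\eta(\bx)=\cl\B_\eta(\bx)$, so the infimum defining the nonintersect index is unchanged when the open ball is replaced by the closed one; hence $d(\Omega_1-a_1,\ldots,\Omega_n-a_n,\overline{\B}_\eta(\bx))=d(\Omega_1-a_1,\ldots,\Omega_n-a_n,\B_\eta(\bx))$, and assumption \eqref{T14-1} is exactly assumption \eqref{T12-1} for the enlarged collection.

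Now apply Theorem~\ref{T17} to the $n+1$ closed sets $\Omega_1,\ldots,\Omega_n,\overline{\B}_\eta(\bx)$, the vectors $a_1,\ldots,a_n$, the same $\eps$ and $\varphi\in\mathcal{C}^1$, and parameters $\la$ and $\eta$. Part~(i) returns points $\omega_i\in\Omega_i\cap\B_\la(\bx)$ $(i=1,\ldots,n)$, a point $\omega_{n+1}\in\overline{\B}_\eta(\bx)\cap\B_\eta(\bx)=\B_\eta(\bx)$ satisfying the enlarged version of \eqref{T12-3}, and vectors $x_1^*,\ldots,x_{n+1}^*\in X^*$ with $\sum_{i=1}^{n+1}x_i^*=0$, $\sum_{i=1}^{n}\|x_i^*\|=1$, obeying \eqref{T17-2} and \eqref{T17-3}. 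Set $x:=\omega_{n+1}$; since $x$ lies in $\B_\eta(\bx)$, the interior of $\overline{\B}_\eta(\bx)$, we have $N_{\overline{\B}_\eta(\bx)}(x)=\{0\}$, whence $d\big(x_{n+1}^*,N_{\overline{\B}_\eta(\bx)}(x)\big)=\|x_{n+1}^*\|=\big\|\sum_{i=1}^{n}x_i^*\big\|$. Substituting this into \eqref{T17-2} turns it into \eqref{T19-2} with $N=N^C$, while \eqref{T17-3} is \eqref{T19-3} and the enlarged \eqref{T12-3} is \eqref{T14-3}; retaining only $x_1^*,\ldots,x_n^*$ (which still satisfy $\sum_{i=1}^n\|x_i^*\|=1$) gives part~(i) of Theorem~\ref{T19}. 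For part~(ii), take $X$ Asplund and $\tau\in]0,1[$ and run part~(ii) of Theorem~\ref{T17} on the same enlarged collection: it returns $\omega_i\in\Omega_i\cap\B_\la(\bx)$, $x:=\omega_{n+1}\in\B_\eta(\bx)$ satisfying \eqref{T17-4}---which, by the index identification above, is \eqref{T19-4}---together with \Fr\ subgradients with the stated normalization such that \eqref{T17-2} (now with $N=N^F$ and again $N_{\overline{\B}_\eta(\bx)}(x)=\{0\}$) becomes \eqref{T19-2}, and \eqref{T17-5} becomes \eqref{T19-5}. Since $\varphi\in\mathcal{C}^1$ and the relevant maximum is strictly positive in both cases, the differentiability of $\varphi$ required by Theorem~\ref{T17} holds automatically.

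I do not expect a genuine obstacle: the statement is a reduction and the work is bookkeeping. The three points that need care are (a) the inclusion $\overline{\B}_\eta(\bx)\subset\B_\rho(\bx)$, which is exactly where the hypothesis $\eta<\rho$ is used; (b) the identification of the nonintersect index over the open ball with the one over the closed ball, so that \eqref{T14-1} coincides with the hypothesis \eqref{T12-1} of Theorem~\ref{T17} with no loss; and (c) the fact that the point $x$ returned by Theorem~\ref{T17} lies in the \emph{interior} of the appended ball---guaranteed by taking the same $\eta$ for the ball's radius and for the $\eta$-parameter of Theorem~\ref{T17}---so that $N_{\overline{\B}_\eta(\bx)}(x)=\{0\}$ and the distance term $d(x_{n+1}^*,N_{\overline{\B}_\eta(\bx)}(x))$ collapses to $\big\|\sum_{i=1}^n x_i^*\big\|$, which is what produces the term $\eta\,\|\sum_{i=1}^n x_i^*\|$ in \eqref{T19-2}.
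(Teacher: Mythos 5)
Your proposal is correct and follows exactly the paper's own route: apply Theorem~\ref{T17} to the enlarged collection $\Omega_1,\ldots,\Omega_n,\overline{\B}_\eta(\bx)$ and use that the returned point $x$ lies in the open ball, so $N_{\overline{\B}_\eta(\bx)}(x)=\{0\}$ and the last distance term collapses to $\|\sum_{i=1}^n x_i^*\|$. The only difference is that you spell out the bookkeeping (the inclusion $\overline{\B}_\eta(\bx)\subset\B_\rho(\bx)$ and the equality of the nonintersect indices over the open and closed balls) that the paper leaves implicit.
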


\begin{proof}
The statement is a direct consequence of Theorem~\ref{T17} applied to the collection of $n+1$ closed sets $\Omega_1,\ldots,\Omega_n,\overline{\B}_\eta(\bx)$.
It is sufficient to notice that, once $x\in \B_\eta(\bx)$, we have $N_{\B_\eta(\bx)}(x)=\{0\}$, and consequently, $d\big(-\sum_{i=1}^n x_i^*,N_{\B_\eta(\bx)}(x)\big)=\norm{\sum_{i=1}^n x_i^*}$.
\end{proof}

\begin{remark}
The comments concerning Theorem~\ref{T17} made in Remark~\ref{R18} are with obvious modifications applicable to Theorem~\ref{T19} and the subsequent statements in this paper.
\end{remark}

The single common point $\bx\in\cap_{i=1}^n\Omega_i$ in Theorems~\ref{T17} and \ref{T19} can be replaced by a collection of individual points $\omega_i\in\Omega_i$ $(i=1,\ldots,n)$ (which always exist as long as the sets are nonempty).
The next statement is a consequence of Theorem~\ref{T17} applied to the collection of sets $\Omega_i':=\Omega_i-\omega_i$ ${(i=1,\ldots,n)}$, which obviously have a common point $0\in\cap_{i=1}^n \Omega_i'$.
It generalizes and improves \cite[Corollary 6.1]{BuiKru19}.

\begin{proposition}\label{P21}
Let $\Omega_1,\ldots,\Omega_n$ be closed subsets of a Banach space $X$,
$\omega_i\in\Omega_i$ ${(i=1,\ldots,n)}$, $a_i\in{X}$ ${(i=1,\ldots,n-1)}$, $\eps>0$ and $\varphi\in\mathcal{C}^1$.
Suppose that
\begin{gather}\notag%\label{C4.C9-1}
\bigcap\limits_{i=1}^{n-1}(\Omega_i-\omega_i-a_i) \cap(\Omega_{n}-\omega_n)
=\emptyset,
\\\notag%\label{T12-1}
\varphi\Big(\max_{1\le i\le n-1}\norm{a_i}\Big) <\varphi(d(\Omega_1-\omega_i-a_1,\ldots, \Omega_{n-1}-\omega_{n-1}-a_{n-1},\Omega_n-\omega_n))+\eps.
\end{gather}
%(or simply $\varphi\left(\max_{1\le i\le n-1} \norm{a_i}\right)<\eps$).
Set $a'_i:=a_i+\omega_i-\omega_{n}$ $(i=1,\ldots,n-1)$.
Then, for any $\la>0$, $\eta>0$,
\begin{enumerate}
\item
there exist points
$\omega'_i\in\Omega_i\cap{B}_\la(\omega_i)$ $(i=1,\ldots,n-1)$ and $\omega'_n\in\Omega_n\cap \B_\eta(\omega_n)$ satisfying
\begin{gather}
\label{P21-3}
0<\max_{1\le i \le n-1} \norm{\omega_{n}'+a'_i-\omega_i'}\le \max_{1\le i \le n-1}\norm{a_i},
\end{gather}
and vectors
$x_i^*\in X^*$ $(i=1,\ldots,n)$ satisfying conditions \eqref{T17-1} and
\begin{multline}\label{P21-4}
\varphi'\left(\max_{1\le i \le n-1} \norm{\omega_{n}'+a'_i-\omega_i'}\right)
\\
\times\left({\la}\sum_{i=1}^{n-1} d\left(x_i^*, {N}_{\Omega_i}(\omega'_i)\right) + {\eta}d\left(x_n^*,{N}_{\Omega_n}(\omega'_n)\right)\right) <\eps,
\end{multline}
\vspace{-6mm}
\begin{gather*}
\sum_{i=1}^{n-1} \ang{x_i^*,\omega_{n}'+a'_i-\omega_i'} =\max_{1\le i \le n-1} \norm{\omega_{n}'+a'_i-\omega_i'},
\end{gather*}
where $N$ stands for the Clarke normal cone ${(N=N^C)}$;

\item
if $X$ is Asplund,
then, for any $\tau\in]0,1[$,
there exist points
$\omega'_i\in\Omega_i\cap{B}_\la(\omega_i)$ ${(i=1,\ldots,n-1)}$ and $\omega'_n\in\Omega_n\cap \B_\eta(\omega_n)$ satisfying
\begin{multline*}
0<\max_{1\le i \le n-1} \norm{\omega_{n}'+a'_i-\omega_i'} \\ <\varphi\iv(\varphi(d(\Omega_1-\omega_i-a_1,\ldots, \Omega_{n-1}-\omega_{n-1}-a_{n-1},\Omega_n-\omega_n))+\eps),
\end{multline*}
and vectors
$x_i^*\in X^*$ $(i=1,\ldots,n)$ satisfying conditions \eqref{T17-1}, \eqref{P21-4}, with $N$ standing for the \Fr\ normal cone ${(N=N^F)}$, and
\begin{align*}
\sum_{i=1}^{n-1}\ang{x_i^*,\omega_{n}'+a'_i-\omega_i'}>\tau \max_{1\le i \le n-1}\norm{\omega_{n}'+a'_i-\omega_i'}.
\end{align*}
\end{enumerate}
\end{proposition}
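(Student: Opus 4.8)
The plan is to obtain the statement as a direct corollary of Theorem~\ref{T17} by translating each set so that the $n$ individual points $\omega_i$ collapse to a single common point at the origin. First I would put $\Omega_i':=\Omega_i-\omega_i$ for $i=1,\ldots,n$; since $\omega_i\in\Omega_i$, we have $0\in\cap_{i=1}^n\Omega_i'$, so the hypothesis $\bx\in\cap_{i=1}^n\Omega_i$ of Theorem~\ref{T17} holds for the collection $\{\Omega_1',\ldots,\Omega_n'\}$ with $\bx:=0$. The first assumption of the proposition, $\bigcap_{i=1}^{n-1}(\Omega_i-\omega_i-a_i)\cap(\Omega_n-\omega_n)=\emptyset$, is exactly condition~\eqref{P10-1} for $\{\Omega_1',\ldots,\Omega_n'\}$ with the \emph{same} vectors $a_1,\ldots,a_{n-1}$, because $\Omega_i'-a_i=\Omega_i-\omega_i-a_i$ and $\Omega_n'=\Omega_n-\omega_n$; and the second assumption is precisely condition~\eqref{T12-1} for $\{\Omega_1',\ldots,\Omega_n'\}$, since $d(\Omega_1'-a_1,\ldots,\Omega_{n-1}'-a_{n-1},\Omega_n')=d(\Omega_1-\omega_1-a_1,\ldots,\Omega_{n-1}-\omega_{n-1}-a_{n-1},\Omega_n-\omega_n)$. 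Hence Theorem~\ref{T17} applies to $\{\Omega_1',\ldots,\Omega_n'\}$ at $0$ with the vectors $a_i$, yielding, for any $\la>0$ and $\eta>0$, points $\omega_i''\in\Omega_i'\cap\B_\la(0)$ $(i=1,\ldots,n-1)$, $\omega_n''\in\Omega_n'\cap\B_\eta(0)$, and dual vectors $x_i^*\in X^*$ with the properties listed in parts (i) and (ii) of that theorem.

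Next I would translate the conclusion back by setting $\omega_i':=\omega_i''+\omega_i\in\Omega_i$. Then $\|\omega_i'-\omega_i\|=\|\omega_i''\|$, so $\omega_i'\in\B_\la(\omega_i)$ for $i=1,\ldots,n-1$ and $\omega_n'\in\B_\eta(\omega_n)$, matching the localization claimed in the proposition. The bookkeeping identity that makes everything fit is $\omega_n''+a_i-\omega_i''=(\omega_n'-\omega_n)+a_i-(\omega_i'-\omega_i)=\omega_n'+a_i'-\omega_i'$ with $a_i'=a_i+\omega_i-\omega_n$, so every occurrence of $\omega_n''+a_i-\omega_i''$ in conditions \eqref{T12-3}, \eqref{T17-2} and \eqref{T17-3} becomes $\omega_n'+a_i'-\omega_i'$. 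Together with the translation invariance of the Clarke and \Fr\ normal cones, $N_{\Omega_i'}(\omega_i'')=N_{\Omega_i}(\omega_i')$, this turns \eqref{T12-3} into \eqref{P21-3}, \eqref{T17-2} into \eqref{P21-4}, and \eqref{T17-3} into the last displayed equality of part (i); the Asplund half of Theorem~\ref{T17} (conditions \eqref{T17-4} and \eqref{T17-5}, for any $\tau\in]0,1[$) likewise produces the two displays of part (ii), using once more the identification of the two nonintersect indices above.

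There is essentially no hard step: the proposition is a relabelling of Theorem~\ref{T17}. The only point deserving attention is the change of variables — identifying which objects are translation invariant (the normal cones) and how the \emph{gap} vectors $\omega_n+a_i-\omega_i$ transform into $\omega_n'+a_i'-\omega_i'$ under the shift, and checking that the relocated points land in the prescribed balls $\B_\la(\omega_i)$ and $\B_\eta(\omega_n)$. Once the identity $\omega_n''+a_i-\omega_i''=\omega_n'+a_i'-\omega_i'$ is recorded, the transfer of each of the finitely many conclusions of Theorem~\ref{T17} is mechanical.
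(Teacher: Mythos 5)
Your proposal is correct and is exactly the paper's route: the paper proves Proposition~\ref{P21} by applying Theorem~\ref{T17} to the translated sets $\Omega_i':=\Omega_i-\omega_i$ with common point $0$, and your change of variables $\omega_i'=\omega_i''+\omega_i$, the identity $\omega_n''+a_i-\omega_i''=\omega_n'+a_i'-\omega_i'$, and the translation invariance of the normal cones supply precisely the bookkeeping the paper leaves implicit.
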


\begin{remark}
\begin{enumerate}
\item
In the particular case when all the points $\omega_i\in\Omega_i$ ${(i=1,\ldots,n)}$ coincide, i.e. $\omega_1=\ldots=\omega_n=:\bx\in\cap_{i=1}^n\Omega_i$, Proposition~\ref{P21} reduces to Theorem~\ref{T17}.
\item
Proposition~\ref{P21} does not assume the sets $\Omega_1,\ldots,\Omega_n$ to have a common point and, given some individual points
$\omega_i\in\Omega_i$ ${(i=1,\ldots,n)}$, establishes the existence of another collection of points
$\omega'_i\in\Omega_i$ ${(i=1,\ldots,n)}$ with certain properties, each point in a \nbh\ of the corresponding given one.
If the sets do have a common point $\bx\in\cap_{i=1}^n\Omega_i$, then, with $\xi:=\max_{1\le i \le n} \norm{\omega_i-\bx}$, the estimates in Proposition~\ref{P21} yield $\omega'_i\in \B_{\la+\xi}(\bx)$ ${(i=1,\ldots,n-1)}$ and $\omega'_n\in \B_{\eta+\xi}(\bx)$.
In this form, Proposition~\ref{P21} can be considered as an %nonlinear
extension of \cite[Theorem~3.1]{KruLop12.1}, which served as the main tool when extending the extremal principle to infinite collections of sets.
Note that in the linear case, the conclusions of Proposition~\ref{P21} admit significant simplifications
%(see Section~\ref{S5.5}, especially item 3)
which make the reduction of Proposition~\ref{P21} to (the improved version of) \cite[Theorem~3.1]{KruLop12.1} straightforward; cf. \cite[Proposition~6.1]{BuiKru19}.
\end{enumerate}
\end{remark}

Another consequence of Theorem~\ref{T17}
provides dual necessary conditions for a collection of sets with empty intersection.
It generalizes and improves \cite[Theorem 6.2]{BuiKru19}.

\begin{proposition}\label{ZhNg}
Let $\Omega_1,\ldots,\Omega_n$ be closed subsets of a Banach space $X$, $\omega_i\in\Omega_i$ ${(i=1,\ldots,n)}$, $\eps>0$ and $\varphi\in\mathcal{C}^1$.
Suppose that $\cap_{i=1}^n\Omega_i=\emptyset$ and
\begin{gather}\label{ZN-1}
\varphi\Big(\max_{1\le i\le n-1} \norm{\omega_n-\omega_i}\Big) <\varphi(d(\Omega_1,\ldots,\Omega_n))+\eps.
\end{gather}
Then, for any $\la>0$ and $\eta>0$,
\begin{enumerate}
\item
there exist points
$\omega'_i\in\Omega_i\cap{B}_\la(\omega_i)$ $(i=1,\ldots,n-1)$ and $\omega'_n\in\Omega_n\cap \B_\eta(\omega_n)$ satisfying
condition \eqref{P16-3}, and vectors
$x_i^*\in X^*$ $(i=1,\ldots,n)$ satisfying conditions \eqref{T17-1} and
\begin{gather}
\label{ZN-2}
\varphi'\Big(\max_{1\le i \le n-1} \norm{\omega'_n-\omega'_i}\Big) \Big({\la}\sum_{i=1}^{n-1} d\left(x_i^*, {N}_{\Omega_i}(\omega_i)\right) + {\eta}d\left(x_n^*,{N}_{\Omega_n}(\omega_n)\right)\Big) <\eps,
\\
\notag%\label{C4.T8-2'}
\sum_{i=1}^{n-1}\ang{x_i^*,\omega'_n-\omega'_i}=\max_{1\le i \le n-1}\norm{\omega'_n-\omega'_i},
\end{gather}
where $N$ stands for the Clarke normal cone ${(N=N^C)}$;
\item
if $X$ is Asplund,
then, for any $\tau\in]0,1[$,
there exist points
$\omega'_i\in\Omega_i\cap{B}_\la(\omega_i)$ ${(i=1,\ldots,n-1)}$ and $\omega'_n\in\Omega_n\cap \B_\eta(\omega_n)$ satisfying
\begin{gather}\label{ZN-4}
0<\max_{1\le{i}\le{n}-1} \|\omega'_n-\omega'_{i}\| <\varphi\iv(\varphi(d(\Omega_1,\ldots,\Omega_n))+\eps),
\end{gather}
and vectors
$x_i^*\in X^*$ $(i=1,\ldots,n)$ satisfying conditions \eqref{T17-1}, \eqref{ZN-2}, with $N$ standing for the \Fr\ normal cone ${(N=N^F)}$, and
\begin{gather}
\notag%\label{C4.T8-3'}
\sum_{i=1}^{n-1}\langle x^*_i, \omega'_n-\omega'_i\rangle >\tau\max_{1\le i \le n-1}\norm{\omega'_n-\omega'_i}.
\end{gather}
\end{enumerate}
\end{proposition}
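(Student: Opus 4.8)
The plan is to obtain Proposition~\ref{ZhNg} from Theorem~\ref{T17} by the translation device described around~\eqref{23}. Introduce the shifted sets $\Omega_i':=\Omega_i-\omega_i$ $(i=1,\ldots,n)$; these are closed and $0\in\bigcap_{i=1}^n\Omega_i'$, so $0$ may play the role of the common point $\bx$ in Theorem~\ref{T17}. Take $a_i:=\omega_n-\omega_i$ $(i=1,\ldots,n-1)$. By~\eqref{23}, $\bigcap_{i=1}^{n-1}(\Omega_i'-a_i)\cap\Omega_n'=\emptyset$, i.e. condition~\eqref{P10-1} holds for the collection $\Omega_1',\ldots,\Omega_n'$ and these $a_i$. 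Since $\max_{1\le i\le n-1}\norm{a_i}=\max_{1\le i\le n-1}\norm{\omega_n-\omega_i}$ and, by the identity noted right after~\eqref{23} (translation invariance of the nonintersect index~\eqref{d}), $d(\Omega_1'-a_1,\ldots,\Omega_{n-1}'-a_{n-1},\Omega_n')=d(\Omega_1,\ldots,\Omega_n)$, the hypothesis~\eqref{ZN-1} is exactly condition~\eqref{T12-1} for the primed data. Thus Theorem~\ref{T17} applies.

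Fix $\la>0$ and $\eta>0$. For part~(i), Theorem~\ref{T17}(i) supplies points $\tilde\omega_i\in\Omega_i'\cap B_\la(0)$ $(i=1,\ldots,n-1)$, $\tilde\omega_n\in\Omega_n'\cap\B_\eta(0)$ satisfying~\eqref{T12-3}, and vectors $x_i^*\in X^*$ satisfying~\eqref{T17-1},~\eqref{T17-2} (with $N=N^C$) and~\eqref{T17-3}. Put $\omega_i':=\tilde\omega_i+\omega_i$, so that $\omega_i'\in\Omega_i\cap B_\la(\omega_i)$ for $i\le n-1$ and $\omega_n'\in\Omega_n\cap\B_\eta(\omega_n)$. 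The elementary identity $\tilde\omega_n+a_i-\tilde\omega_i=(\omega_n'-\omega_n)+(\omega_n-\omega_i)-(\omega_i'-\omega_i)=\omega_n'-\omega_i'$ rewrites~\eqref{T12-3} as~\eqref{P16-3}, rewrites~\eqref{T17-3} as the displayed equality of part~(i), and rewrites the argument of $\varphi'$ in~\eqref{T17-2} as $\max_{1\le i\le n-1}\norm{\omega_n'-\omega_i'}$. Finally, translation invariance of the Clarke normal cone gives $N^C_{\Omega_i'}(\tilde\omega_i)=N^C_{\Omega_i}(\omega_i')$, so~\eqref{T17-2} becomes~\eqref{ZN-2} (with the normal cones taken at the produced points $\omega_i'$). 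This proves part~(i).

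For part~(ii) one argues identically, applying instead Theorem~\ref{T17}(ii): given $\tau\in]0,1[$, it produces $\tilde\omega_i$ and $x_i^*$ (now with $N=N^F$) satisfying~\eqref{T17-1},~\eqref{T17-2},~\eqref{T17-4} and~\eqref{T17-5}; the substitution $\omega_i':=\tilde\omega_i+\omega_i$, the identity above, and once more $d(\Omega_1'-a_1,\ldots,\Omega_{n-1}'-a_{n-1},\Omega_n')=d(\Omega_1,\ldots,\Omega_n)$ convert~\eqref{T17-4} into~\eqref{ZN-4},~\eqref{T17-5} into the final displayed inequality, and~\eqref{T17-2} into~\eqref{ZN-2} with \Fr\ normal cones. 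I do not expect a genuine obstacle here: the whole argument is bookkeeping built on Theorem~\ref{T17}. The only points that need care are to keep the balls supplied by Theorem~\ref{T17} centred at $0$ and shift back by $\omega_i$ only at the end, and to choose $a_i=\omega_n-\omega_i$ (not $\omega_i-\omega_n$) so that~\eqref{23} and the identity $\tilde\omega_n+a_i-\tilde\omega_i=\omega_n'-\omega_i'$ come out with the correct signs.
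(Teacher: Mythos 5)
Your proposal is correct and takes exactly the paper's route: the paper's own proof is the one-line observation that the sets $\Omega_i':=\Omega_i-\omega_i$ and vectors $a_i:=\omega_n-\omega_i$ satisfy $0\in\cap_{i=1}^n\Omega_i'$ and \eqref{23}, followed by an application of Theorem~\ref{T17}. You merely spell out the bookkeeping (translation invariance of the quantity \eqref{d} and of the normal cones, and the identity $\tilde\omega_n+a_i-\tilde\omega_i=\omega_n'-\omega_i'$) that the paper leaves implicit.
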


\begin{proof}
It is sufficient to notice that the sets $\Omega_i':=\Omega_i-\omega_i$ $(i=1,\ldots,n)$ and vectors $a_i:=\omega_n-\omega_i$ $(i=1,\ldots,n-1)$ satisfy $0\in\cap_{i=1}^n\Omega_i'$ and \eqref{23}, and apply Theorem~\ref{T17}.
\end{proof}

\begin{remark}
Proposition~\ref{ZhNg} can be considered as an
%nonlinear
extension and improvement of the two \emph{unified separation theorems} due to Zheng and Ng \cite[Theorems~3.1 and 3.4]{ZheNg11}, which correspond to $\varphi$ being the identity function, and $\la=\eta$; cf. \cite[Theorem~6.2]{BuiKru19}.
Note that conditions \eqref{P16-3} and \eqref{ZN-4} in Proposition~\ref{ZhNg}, having no analogues in \cite{ZheNg11}, provide additional restrictions on the choice of the points $\omega'_i\in\Omega_i$ ${(i=1,\ldots,n)}$.
For instance, if $\max_{1\le i\le n-1} \norm{\omega_n-\omega_i}=d(\Omega_1,\ldots,\Omega_n)$, i.e. the distance-like quantity \eqref{d} is attained at the points $\omega_i\in\Omega_i$ ${(i=1,\ldots,n)}$, then, thanks to \eqref{P16-3}, the points $\omega'_i\in\Omega_i$ ${(i=1,\ldots,n)}$ in part (i) of Proposition~\ref{ZhNg} also possess this property.

In \cite{ZheNg11}, instead of the distance-like quantity \eqref{d} in condition \eqref{ZN-1}, a slightly more general $p$-weighted nonintersect index was used with the corresponding $q$-weighted sums replacing the usual ones in \eqref{T17-1} and \eqref{ZN-2}.
This corresponds to considering $l_p$ norms on product spaces and the corresponding $l_q$ dual norms; cf. Remark~\ref{R5}(ii).
%In the current paper, for simplicity only the maximum norm on product spaces is considered together with the corresponding sum norm in the dual space.
%cf. Remark~\ref{R3.10}(ii).
\end{remark}

To complete the section, we formulate two corollaries for the H\"older case, which correspond to setting $\varphi(t):=\al\iv t^q$ in Theorems~\ref{T17} and \ref{T19}, respectively.

\begin{corollary}\label{C4.5}
Let $\Omega_1,\ldots,\Omega_n$ be closed subsets of a Banach space $X$,
$\bx\in\cap_{i=1}^n\Omega_i$, ${a_i\in{X}}$ $(i=1,\ldots,n-1)$, $\eps>0$ and $q>0$.
Suppose that condition \eqref{P10-1} is satisfied, and
\begin{gather}\notag%\label{T12-1}
\max_{1\le i\le n-1}\norm{a_i}^q< d^q(\Omega_1-a_1,\ldots,\Omega_{n-1}-a_{n-1}, \Omega_n)+\eps.
\end{gather}
%(or simply $\max_{1\le i\le n-1}\norm{a_i}^q<\eps$).
Then, for any $\la>0$ and $\eta>0$,
\begin{enumerate}
\item
there exist points
$\omega_i\in\Omega_i\cap{B}_\la(\bx)$ $(i=1,\ldots,n-1)$ and $\omega_n\in\Omega_n\cap \B_\eta(\bx)$ satisfying condition \eqref{T12-3}, and vectors
$x_i^*\in X^*$ $(i=1,\ldots,n)$ satisfying conditions \eqref{T17-1}, \eqref{T17-3} and
\begin{gather}
\label{C4.5-2}
\hspace{-3mm} q\max_{1\le{i}\le{n}-1} \|\omega_n+a_i-\omega_{i}\|^{q-1} \Big({\la}\sum_{i=1}^{n-1} d\left(x_i^*, {N}_{\Omega_i}(\omega_i)\right) + {\eta}d\left(x_n^*,{N}_{\Omega_n}(\omega_n)\right)\Big) <\eps,
\end{gather}
where $N$ stands for the Clarke normal cone ${(N=N^C)}$;

\item
if $X$ is Asplund,
then, for any $\tau\in]0,1[$,
there exist points
$\omega_i\in\Omega_i\cap{B}_\la(\bx)$ ${(i=1,\ldots,n-1)}$ and $\omega_n\in\Omega_n\cap \B_\eta(\bx)$ satisfying
\begin{gather}
\notag%\label{T17-4}
0<\max_{1\le{i}\le{n}-1} \|\omega_n+a_i-\omega_{i}\|^q <d^q(\Omega_1-a_1,\ldots,\Omega_{n-1}-a_{n-1}, \Omega_n)+\eps,
\end{gather}
and vectors
$x_i^*\in X^*$ $(i=1,\ldots,n)$ satisfying conditions \eqref{T17-1}, \eqref{T17-5} and \eqref{C4.5-2}, with $N$ standing for the \Fr\ normal cone ${(N=N^F)}$.
\end{enumerate}
\end{corollary}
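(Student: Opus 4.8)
The plan is to derive Corollary~\ref{C4.5} directly from Theorem~\ref{T17} by choosing the power function as the nonlinearity. First I would set $\varphi(t):=t^q$ for $t\ge0$ and verify that $\varphi\in\mathcal{C}^1$: it is continuous and strictly increasing on $\R_+$, satisfies $\varphi(0)=0$ and $\varphi(t)\to+\infty$ as $t\to+\infty$, and is continuously differentiable on $]0,+\infty[$ with $\varphi'(t)=qt^{q-1}>0$ for all $t>0$; moreover $\varphi\iv(s)=s^{1/q}$ is of the same kind. (Carrying instead $\varphi(t):=\al\iv t^q$ for a fixed $\al>0$ changes nothing beyond an obvious rescaling of $\eps$.)

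Next I would match the hypotheses. Since $t\mapsto t^q$ is increasing, $\max_{1\le i\le n-1}\norm{a_i}^q=\varphi(\max_{1\le i\le n-1}\norm{a_i})$ and $d^q(\Omega_1-a_1,\ldots,\Omega_{n-1}-a_{n-1},\Omega_n)=\varphi(d(\Omega_1-a_1,\ldots,\Omega_{n-1}-a_{n-1},\Omega_n))$, so the assumed inequality is precisely condition \eqref{T12-1}; together with \eqref{P10-1} this is exactly what Theorem~\ref{T17} requires. Fixing $\la>0$ and $\eta>0$, part~(i) of Theorem~\ref{T17} produces points $\omega_i\in\Omega_i\cap{B}_\la(\bx)$ $(i=1,\ldots,n-1)$, $\omega_n\in\Omega_n\cap\B_\eta(\bx)$ satisfying \eqref{T12-3}, and vectors $x_i^*\in X^*$ $(i=1,\ldots,n)$ satisfying \eqref{T17-1}, \eqref{T17-3} and \eqref{T17-2} for the Clarke normal cone. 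Substituting $\varphi'(t)=qt^{q-1}$ into \eqref{T17-2} turns it verbatim into \eqref{C4.5-2}, which gives part~(i). For part~(ii), with $X$ Asplund I would invoke part~(ii) of Theorem~\ref{T17}: for every $\tau\in]0,1[$ it yields points satisfying \eqref{T17-4} and vectors satisfying \eqref{T17-1}, \eqref{T17-5} and \eqref{T17-2} for the \Fr\ normal cone; here \eqref{T17-4} reads $0<\max_{1\le i\le n-1}\|\omega_n+a_i-\omega_i\|<(d^q(\Omega_1-a_1,\ldots,\Omega_{n-1}-a_{n-1},\Omega_n)+\eps)^{1/q}$, which upon raising to the power $q$ is the displayed inequality of part~(ii), and \eqref{T17-2} again becomes \eqref{C4.5-2}.

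There is essentially no substantive obstacle here; the only point worth a moment's attention is that membership in $\mathcal{C}^1$ requires $\varphi'$ only on $]0,+\infty[$ (indeed, for $q<1$ one has $\varphi'(t)\to+\infty$ as $t\downarrow0$), but this is harmless because conditions \eqref{T12-3} and \eqref{T17-4} guarantee $\max_{1\le i\le n-1}\|\omega_n+a_i-\omega_i\|>0$, so $\varphi'$ is evaluated only at strictly positive arguments. Hence Corollary~\ref{C4.5} follows at once from Theorem~\ref{T17}.
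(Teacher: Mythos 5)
Your proposal is correct and is exactly the paper's route: the paper obtains Corollary~\ref{C4.5} by setting $\varphi(t):=\al\iv t^q$ (here $\al=1$) in Theorem~\ref{T17}, which is precisely your specialization, including the observation that \eqref{T12-3} and \eqref{T17-4} keep the argument of $\varphi'$ strictly positive.
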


\begin{corollary}%\label{T19}
Let $\Omega_1,\ldots,\Omega_n$ be closed subsets of a Banach space $X$,
$\bx\in\cap_{i=1}^n\Omega_i$, ${a_i\in{X}}$ $(i=1,\ldots,n)$, $\rho\in]0,+\infty]$, $\eps>0$ and $q>0$.
Suppose that condition \eqref{D1-3} is satisfied and \begin{gather}\notag%\label{T14-1}
\max_{1\le i\le n}\norm{a_i}^q <d^q(\Omega_1-a_1,\ldots,\Omega_{n}-a_{n}, {B}_\eta(\bx))+\eps.
\end{gather}
%(or simply $\max_{1\le i\le n}\norm{a_i}^q<\eps$).
Then, for any $\la>0$ and $\eta\in]0,\rho[$,
\begin{enumerate}
\item
there exist points
$\omega_i\in\Omega_i\cap{B}_\la(\bx)$ $(i=1,\ldots,n)$ and $x\in \B_\eta(\bx)$ satisfying condition \eqref{T14-3}, and vectors
$x_i^*\in X^*$ $(i=1,\ldots,n)$ satisfying $\sum_{i=1}^{n}\norm{x_i^*}=1$, condition \eqref{T19-3} and
\begin{gather}
\label{C4.6-2}
q\max_{1\le{i}\le{n}} \|x+a_i-\omega_{i}\|^{q-1} \left({\la}\sum_{i=1}^{n} d\left(x_i^*, {N}_{\Omega_i}(\omega_i)\right) + {\eta}\norm{\sum_{i=1}^n x_i^*}\right)<\eps,
\end{gather}
where $N$ stands for the Clarke normal cone ${(N=N^C)}$;
\item
if $X$ is Asplund,
then, for any $\tau\in]0,1[$,
there exist points
$\omega_i\in\Omega_i\cap{B}_\la(\bx)$ $(i=1,\ldots,n)$ and $x\in \B_\eta(\bx)$ satisfying
\begin{gather}
\notag%\label{T19-4}
0<\max_{1\le{i}\le{n}} \|x+a_i-\omega_{i}\|^q <d^q(\Omega_1-a_1,\ldots,\Omega_{n}-a_{n}, {B}_\eta(\bx))+\eps,
\end{gather}
and vectors
$x_i^*\in X^*$ $(i=1,\ldots,n)$ satisfying $\sum_{i=1}^{n}\norm{x_i^*}=1$, and conditions \eqref{T19-5} and \eqref{C4.6-2} with $N$ standing for the \Fr\ normal cone ${(N=N^F)}$.
\end{enumerate}
\end{corollary}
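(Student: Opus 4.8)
The plan is to read off this corollary as the special case of Theorem~\ref{T19} corresponding to the power function $\varphi(t):=t^q$, $t\in\R_+$. The first step is to confirm that this $\varphi$ lies in the class $\mathcal{C}^1$: since $q>0$, $\varphi$ is continuous and strictly increasing on $\R_+$, $\varphi(0)=0$, $\varphi(t)\to+\infty$ as $t\to+\infty$, and $\varphi$ is continuously differentiable on $]0,+\infty[$ with $\varphi'(t)=qt^{q-1}>0$ for every $t>0$. I would also record the elementary identities $\varphi(t)=t^q$ and $\varphi^{-1}(s)=s^{1/q}$, together with the monotonicity of $t\mapsto t^q$ on $\R_+$, which lets one freely pass an inequality under the $q$-th power. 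With this $\varphi$, hypothesis \eqref{T14-1} of Theorem~\ref{T19} reads $\max_{1\le i\le n}\|a_i\|^q<d^q(\Omega_1-a_1,\ldots,\Omega_n-a_n,B_\eta(\bx))+\eps$, which is exactly the assumption of the corollary, so Theorem~\ref{T19} applies with the same $\la>0$ and $\eta\in]0,\rho[$.

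It then remains to rewrite the two conclusions of Theorem~\ref{T19} in power form. In part (i), substituting $\varphi'\bigl(\max_{1\le i\le n}\|x+a_i-\omega_i\|\bigr)=q\max_{1\le i\le n}\|x+a_i-\omega_i\|^{q-1}$ turns \eqref{T19-2} into \eqref{C4.6-2}, while \eqref{T14-3}, \eqref{T19-3} and the normalization $\sum_{i=1}^n\|x_i^*\|=1$ are unchanged. In part (ii), raising the strict two-sided estimate \eqref{T19-4} to the power $q$ and using $\varphi^{-1}(\varphi(d(\cdot))+\eps)=(d^q(\cdot)+\eps)^{1/q}$ gives $0<\max_{1\le i\le n}\|x+a_i-\omega_i\|^q<d^q(\cdot)+\eps$, and \eqref{T19-5} and the normalization carry over verbatim; the points $\omega_i$ and $x$ produced by Theorem~\ref{T19} are the ones claimed.

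I do not expect a genuine obstacle here — the corollary is a routine reformulation. The only point meriting a moment's attention is the sub-linear range $0<q<1$, where $\varphi'(t)=qt^{q-1}\to+\infty$ as $t\downarrow0$; but membership $\varphi\in\mathcal{C}^1$ only requires differentiability on $]0,+\infty[$, and since $\max_{1\le i\le n}\|x+a_i-\omega_i\|>0$ by \eqref{T14-3} (respectively by the strict lower bound in \eqref{T19-4}), $\varphi'$ is always evaluated at a strictly positive argument, so no difficulty arises. For completeness I would also note, as the paper's preceding discussion indicates, that allowing the scaled normalization $\varphi(t)=\al\iv t^q$ adds nothing new: the factor $\al\iv$ can be absorbed into $\eps$, so it is enough to treat $\al=1$.
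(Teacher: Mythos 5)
Your proposal is correct and matches the paper's intent exactly: the paper states these two corollaries as the specializations of Theorems~\ref{T17} and \ref{T19} to $\varphi(t):=\al\iv t^q$ and gives no further proof, and your verification (membership of $t\mapsto t^q$ in $\mathcal{C}^1$, translation of \eqref{T14-1} and \eqref{T19-2}--\eqref{T19-5} into power form, absorption of $\al\iv$ into $\eps$, and the remark that $\varphi'$ is only evaluated at strictly positive arguments) is precisely the routine check required.
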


\section{Applications to Convergence Analysis of Alternating Projections}
\label{AP}

In this section, $X$ is a real Hilbert (not necessary finite dimensional) space with inner product $\ang{\cdot,\cdot}$.
For simplicity, we consider here the H\"older case.
In the setting of two sets,
%in Hilbert space,
Corollary~\ref{C4.5} yields the following statement.

\begin{proposition}\label{P5.1}
Let $A$ and $B$ be closed subsets of $X$, $\bx \in A\cap B$, $u\in X$, $\eps >0$ and $q>0$.
Suppose that $(A-u)\cap B=\emptyset$ and
$\norm{u}^q<d^q(A-u,B)+\eps$.
Then, for any $\la>0$ and $\eta>0$, there exist points $a\in A\cap \B_\la(\bx)$ and $b\in B\cap\B_\eta(\bx)$ such that
\begin{gather}
\label{P5.1.2}
\norm{b-a+u}>0,\quad \norm{b-a+u}^q<d^q(A-u,B)+\eps,\\
\label{P5.1.3}
q\norm{b-a+u}^{q-2}\Big(\la d(b-a+u,N_A^F(a)) + \eta d(a-b-u,N_B^F(b))\Big)<{\eps}.
\end{gather}
\end{proposition}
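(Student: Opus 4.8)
The plan is to specialize Corollary~\ref{C4.5} to the case $n=2$, $\varphi(t):=q\iv t^q$, and to translate the abstract normal-cone distances into the concrete Hilbert-space form. First I would set $\Omega_1:=A$, $\Omega_2:=B$, $a_1:=u$, and note that the hypotheses $(A-u)\cap B=\emptyset$ and $\norm{u}^q<d^q(A-u,B)+\eps$ are precisely conditions \eqref{P10-1} and the displayed assumption of Corollary~\ref{C4.5} for this choice (recall $\varphi(t)=q\iv t^q$ gives $\varphi(\max_i\norm{a_i})=q\iv\norm{u}^q$, and the factor $q\iv$ cancels on both sides of the inequality, so it is equivalent to $\norm{u}^q<d^q(A-u,B)+\eps$; one should be slightly careful that Corollary~\ref{C4.5} is stated with $\varphi(t)=\al\iv t^q$, so taking $\al=q$ makes $\varphi\in\mathcal C^1$ with $\varphi'(t)=t^{q-1}$). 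Then part~(ii) of Corollary~\ref{C4.5}, applied in the Asplund (indeed Hilbert) space $X$, produces points $\omega_1=:a\in A\cap\B_\la(\bx)$, $\omega_2=:b\in B\cap\B_\eta(\bx)$ and dual vectors $x_1^*,x_2^*\in X^*$ with $x_1^*+x_2^*=0$, $\norm{x_1^*}=1$, satisfying \eqref{T17-3} and \eqref{C4.5-2} with $N=N^F$.

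Next I would unwind the conclusions. Condition \eqref{T12-3} for $n=2$ reads $0<\norm{\omega_2+a_1-\omega_1}\le\norm{a_1}$, i.e. $0<\norm{b-a+u}\le\norm{u}$; combined with the displayed intermediate inequality $0<\norm{b-a+u}^q<d^q(A-u,B)+\eps$ from part~(ii), this gives the first line \eqref{P5.1.2}. For \eqref{P5.1.3} I would look at \eqref{C4.5-2}, which for $n=2$, $q$ as chosen, and after noting the outer constant: since I took $\al=q$, we have $\varphi'(t)=t^{q-1}$ and \eqref{C4.5-2} reads
\begin{gather*}
q\,\norm{b-a+u}^{q-1}\Big(\la\, d(x_1^*,N_A^F(a))+\eta\, d(x_2^*,N_B^F(b))\Big)<\eps
\end{gather*}
(the explicit $q$ in \eqref{C4.5-2} is the derivative of $t^q$, i.e. it already assumes $\al=1$; I would instead simply apply Theorem~\ref{T17}(ii) directly with $\varphi(t)=q\iv t^q$, $\varphi'(t)=t^{q-1}$, giving $\norm{b-a+u}^{q-1}(\cdots)<\eps$, then multiply through — or, to match the stated Proposition exactly with its $q$ and exponent $q-2$, use the last step below). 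To reach the exponent $q-2$ and the particular arguments in \eqref{P5.1.3}, I would use the collinearity forced by \eqref{T17-3}: with $x_2^*=-x_1^*$ and $\norm{x_1^*}=1$, the equality $\ang{x_1^*,\omega_2+a_1-\omega_1}=\norm{\omega_2+a_1-\omega_1}$ means (in a Hilbert space, by the equality case of Cauchy–Schwarz) that $x_1^*=(b-a+u)/\norm{b-a+u}$ and $x_2^*=(a-b-u)/\norm{b-a+u}$. Substituting these into $d(x_1^*,N_A^F(a))$ and $d(x_2^*,N_B^F(b))$ and pulling the scalar $\norm{b-a+u}\iv$ out of each distance (the normal cones are cones, so $d(\la\iv v,N)=\la\iv d(v,N)$ for $\la>0$) converts $\norm{b-a+u}^{q-1}\cdot\norm{b-a+u}\iv=\norm{b-a+u}^{q-2}$ and replaces the arguments $x_i^*$ by $b-a+u$ and $a-b-u$ respectively, yielding exactly \eqref{P5.1.3}.

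The main obstacle, and the only step requiring genuine care, is the bookkeeping of the constant in front: reconciling the ``$q$'' and the exponent ``$q-1$'' in Corollary~\ref{C4.5} (which is written for $\varphi(t)=\al\iv t^q$ with an unspecified $\al$, effectively $\al=1$ in \eqref{C4.5-2}) with the ``$q$'' and exponent ``$q-2$'' appearing in \eqref{P5.1.3}. The resolution is the rescaling-of-the-dual-vectors argument via the collinearity \eqref{T17-3}, which is special to the two-set Hilbert case and is what turns one power of $\norm{b-a+u}$ into a denominator while simultaneously replacing $x_i^*$ by the concrete primal differences; everything else (membership $a\in A\cap\B_\la(\bx)$, $b\in B\cap\B_\eta(\bx)$, positivity, the bound by $d^q(A-u,B)+\eps$) is a direct transcription of Corollary~\ref{C4.5}(ii) or Theorem~\ref{T17}(ii). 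I would also remark that the parameter $\tau\in]0,1[$ and the auxiliary inequality \eqref{T17-5} from Theorem~\ref{T17}(ii) are not needed here because the collinearity is already exact once we use \eqref{T17-3}; in fact in this Hilbert two-set setting one may invoke part~(i)/(ii) of Corollary~\ref{C4.5} and simply discard \eqref{T17-5}.
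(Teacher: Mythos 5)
Your overall skeleton --- specialize Corollary~\ref{C4.5} to $n=2$ with $\Omega_1:=A$, $\Omega_2:=B$, $a_1:=u$, identify the dual vector with the normalized primal difference $(b-a+u)/\norm{b-a+u}$, and use positive homogeneity of $d(\cdot,N)$ on a cone to trade one power of $\norm{b-a+u}$ for the shift from exponent $q-1$ to $q-2$ --- is exactly the paper's route. The genuine gap is your claim that the exact collinearity \eqref{T17-3} is available here and that $\tau$ and \eqref{T17-5} can be discarded. Proposition~\ref{P5.1} asserts the estimate for \emph{\Fr} normal cones, so you must use part~(ii) of Corollary~\ref{C4.5}, and part~(ii) delivers only the approximate inequality \eqref{T17-5} with $\tau<1$, not the equality \eqref{T17-3}; the equality comes only with part~(i), i.e.\ with \emph{Clarke} cones, and since $N^F_\Omega(\omega)\subset N^C_\Omega(\omega)$ the Clarke distance estimate is weaker and does not imply \eqref{P5.1.3}. (This is not an artifact of how the statement is written: in the Asplund branch of the proof of Theorem~\ref{T17} the fuzzy sum rule evaluates the subdifferential of $f$ at auxiliary points distinct from the reported $\omega_i$'s, so the exact equality is genuinely perturbed into \eqref{T17-5}.) Consequently the equality case of Cauchy--Schwarz, which is the only thing justifying your substitution $x_1^*=(b-a+u)/\norm{b-a+u}$, does not apply.

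The repair is precisely the quantitative argument the paper runs, and it is the step you proposed to skip. First shrink $\eps$ to some $\eps'<\eps$ still satisfying $\norm{u}^q<d^q(A-u,B)+\eps'$; apply Corollary~\ref{C4.5}(ii) with $\eps'$ and with $\tau$ chosen so close to $1$ that $1-\tau<\frac12\bigl(\frac{\eps-\eps'}{q(\la+\eta)(d^q(A-u,B)+\eps)^{1-1/q}}\bigr)^2$; then observe that in a Hilbert space, with $\norm{x^*}=1$, one has $\bigl\|\frac{b-a+u}{\norm{b-a+u}}-x^*\bigr\|=\sqrt{2-2\bigl\langle x^*,\frac{b-a+u}{\norm{b-a+u}}\bigr\rangle}<\sqrt{2(1-\tau)}$, so replacing $x^*$ and $-x^*$ by the normalized primal differences costs at most $(\la+\eta)\sqrt{2(1-\tau)}<\frac{\eps-\eps'}{q\norm{b-a+u}^{q-1}}$ in the two distance terms, an error absorbed by the slack $\eps-\eps'$; this yields \eqref{P5.1.3}. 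Your bookkeeping worries about $\al$ in $\varphi(t)=\al\iv t^q$ are a non-issue ($\al=1$ already gives $\varphi'(t)=qt^{q-1}$, matching the explicit $q$ in \eqref{C4.5-2}); the bookkeeping that actually requires care is the $\tau$--$\eps'$ interplay.
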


\begin{proof}
Let numbers $\la>0$ and $\eta>0$ be given.
Choose numbers $\eps'\in]0,\eps[$ and $\tau \in ]0,1[$ such that $\norm{u}^q<d^q(A-u,B)+\eps'$ and
\begin{equation*}%\label{tau}
1-\tau <\frac{1}{2}\left(\frac{\eps - \eps'}{q(\la+\eta)(d^q(A-u,B)+\eps)^{1-1/q}}\right)^2.
\end{equation*}
By Corollary~\ref{C4.5}, there exist points $a\in A\cap \B_\la(\bx)$ and $b\in B\cap \B_\eta(\bx)$ such that $\norm{b-a+u}>0$ and $\norm{b-a+u}^q<d^q(A-u,B)+\eps'$, and a vector $x^*\in X$ such that $\norm{x^*}=1$ and
\begin{gather*}
q\norm{b-a+u}^{q-1}\big(\la d\left(x^*,N_A(a)\right) + \eta d\left(-x^*, N_B(b)\right)\big)<\eps',
\\
\ang{x^*,b-a+u}>\tau\norm{b-a+u}.
\end{gather*}
Thus, conditions \eqref{P5.1.2} are satisfied, and
\begin{align*}
\norm{\frac{b-a+u}{\norm{b-a+u}}-x^*} &=\sqrt{2-2\ang{x^*,\frac{b-a+u}{\norm{b-a+u}}}} <\sqrt{2(1-\tau)}
\\
&<\frac{\eps - \eps'} {q(\la+\eta)(d^q(A-u,B)+\eps)^{1-1/q}}
<\frac{\eps - \eps'} {q(\la+\eta)\norm{b-a+u}^{q-1}}.
\end{align*}
Hence,
\begin{align*}
&\la d\left(\frac{b-a+u}{\norm{b-a+u}},N_A(a)\right) +\eta d\left(\frac{a-b-u}{\norm{b-a+u}} , N_B(b)\right)
\\
&\hspace{2cm}\le \la d\left(x^*,N_A(a)\right) + \eta d\left(-x^*, N_B(b)\right)+ (\la+\eta)\norm{\frac{b-a+u}{\norm{b-a+u}}-x^*} \\
&\hspace{2cm}< \frac{\eps'}{q\norm{b-a+u}^{q-1}}+ {\frac{\eps - \eps'}{q\norm{b-a+u}^{q-1}}}= \frac{\eps}{q\norm{b-a+u}^{q-1}}.
\end{align*}
This proves \eqref{P5.1.3}.
\end{proof}

The next corollary presents a particular case of Proposition~\ref{P5.1} with $B=\{\bx\}$ and $u=b-\bx$ for some $b\notin A$.

\begin{corollary}\label{C5.2}
Let $A$ be a closed subset of $X$, $\bx \in A$, $\eps >0$ and $q>0$.
Suppose that $b\notin A$ and
$\norm{b-\bx}^q<d^q(b,A)+\eps$.
Then, for any $\la>0$, there exists a point $a\in A\cap \B_\la(\bx)$ such that $\norm{b-a}^q<d^q(b,A)+\eps$ and
$q\la\norm{b-a}^{q-2} d(b-a,N_A^F(a))<{\eps}$.
\end{corollary}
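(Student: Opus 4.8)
The plan is to obtain Corollary~\ref{C5.2} directly from Proposition~\ref{P5.1} via the substitution announced just before the statement, namely $B:=\{\bx\}$ and $u:=b-\bx$. First I would verify the hypotheses of Proposition~\ref{P5.1} for this choice. The singleton $B=\{\bx\}$ is closed and $\bx\in A\cap B$. Since $A-u=A-b+\bx$, the set $(A-u)\cap B$ is empty exactly because $b\notin A$. Using the two-set instance of the nonintersect index \eqref{d}, I would then compute
\[
d(A-u,B)=\inf_{a\in A}\norm{\bx-(a-u)}=\inf_{a\in A}\norm{b-a}=d(b,A),
\]
so the standing assumption $\norm{b-\bx}^q<d^q(b,A)+\eps$ is precisely the inequality $\norm{u}^q<d^q(A-u,B)+\eps$ needed in Proposition~\ref{P5.1}.

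Next, given $\la>0$, I would invoke Proposition~\ref{P5.1} with this $\la$ and an arbitrary $\eta>0$. It yields points $a\in A\cap\B_\la(\bx)$ and a point of $B\cap\B_\eta(\bx)$; since $B=\{\bx\}$, this point is necessarily $\bx$ itself, and the ball condition on it is automatic (which is why the value of $\eta$ is irrelevant). Substituting this point together with $u=b-\bx$ gives $\bx-a+u=b-a$, so the two parts of \eqref{P5.1.2} turn into $\norm{b-a}>0$ and $\norm{b-a}^q<d^q(b,A)+\eps$, the first assertion of the corollary.

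For the slope-type estimate I would use that the \Fr\ normal cone to a singleton is the whole dual space, $N^F_{\{\bx\}}(\bx)=X^*$, which in the Hilbert setting is identified with $X$. Hence $d\big(a-\bx-u,N^F_{\{\bx\}}(\bx)\big)=d(a-b,X)=0$, so the term carrying the factor $\eta$ in \eqref{P5.1.3} vanishes and \eqref{P5.1.3} reduces to
\[
q\la\norm{b-a}^{q-2}\,d\big(b-a,N^F_A(a)\big)<\eps,
\]
which is the remaining claim. I do not expect a real obstacle: the proof is a routine specialization, and the only points needing a short justification are the evaluation of the two-set nonintersect index above and the identity $N^F_{\{\bx\}}(\bx)=X^*$, both immediate from the definitions recalled in Section~\ref{Pre}.
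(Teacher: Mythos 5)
Your proposal is correct and is exactly the specialization the paper intends: the corollary is stated as the particular case of Proposition~\ref{P5.1} with $B=\{\bx\}$ and $u=b-\bx$, and your verification of the hypotheses, the identification $d(A-u,\{\bx\})=d(b,A)$, and the observation that $N^F_{\{\bx\}}(\bx)=X^*$ kills the $\eta$-term are precisely the routine checks required. No gaps.
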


Corollary~\ref{C5.2} can be easily `reversed' to produce a sufficient condition for the `distance decrease' (cf. \cite[Theorem~5.2]{DruIofLew15}).

\begin{corollary}\label{C5.3}
Let $A$ be a closed subset of $X$, $\bx \in A$, $\de>0$, $\la>0$ and $q>0$.
Suppose that $b\notin A$ and
$\norm{b-a}^{q-2} d(b-a,N_A^F(a))\ge\de$
for all $a\in A\cap \B_\la(\bx)$ with $\norm{b-a}^q<d^q(b,A)+q\la\de$.
Then
$d^q(b,A)\le\norm{b-\bx}^q-q\la\de$.
%As a consequence, for any $\eps\in]0,q\la\de[$, there exists a point $a\in A$ such that $\norm{b-a}^q<\norm{b-\bx}^q-\eps$.
\end{corollary}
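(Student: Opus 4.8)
The plan is to obtain Corollary~\ref{C5.3} from Corollary~\ref{C5.2} by contraposition. Suppose the asserted inequality fails, i.e.
\begin{gather*}
d^q(b,A)>\norm{b-\bx}^q-q\la\de,
\end{gather*}
equivalently $\norm{b-\bx}^q<d^q(b,A)+q\la\de$. I then set $\eps:=q\la\de$, which is strictly positive because $q,\la,\de>0$, so that the last inequality reads $\norm{b-\bx}^q<d^q(b,A)+\eps$. Together with the standing assumptions that $A$ is closed, $\bx\in A$ and $b\notin A$, this is precisely the hypothesis of Corollary~\ref{C5.2}.

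Applying Corollary~\ref{C5.2} with this $\eps$ and the given $\la>0$ produces a point $a\in A\cap\B_\la(\bx)$ with
\begin{gather*}
\norm{b-a}^q<d^q(b,A)+\eps=d^q(b,A)+q\la\de
\AND
q\la\norm{b-a}^{q-2}d(b-a,N_A^F(a))<\eps=q\la\de.
\end{gather*}
Dividing the second inequality by $q\la>0$ gives $\norm{b-a}^{q-2}d(b-a,N_A^F(a))<\de$. However, the point $a$ just found belongs to $A\cap\B_\la(\bx)$ and satisfies $\norm{b-a}^q<d^q(b,A)+q\la\de$, so it lies in the set over which the hypothesis of Corollary~\ref{C5.3} is imposed; hence $\norm{b-a}^{q-2}d(b-a,N_A^F(a))\ge\de$, a contradiction. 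Therefore $d^q(b,A)\le\norm{b-\bx}^q-q\la\de$.

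I do not expect any genuine obstacle in this argument: its entire analytic content is already packaged in Corollary~\ref{C5.2}. The only two points deserving a line of care are verifying that the choice $\eps=q\la\de$ is admissible (positivity, which uses $q,\la,\de>0$) and observing that the point delivered by Corollary~\ref{C5.2} automatically satisfies both conditions ($a\in A\cap\B_\la(\bx)$ and $\norm{b-a}^q<d^q(b,A)+q\la\de$) defining the index set quantified in the hypothesis of Corollary~\ref{C5.3}, so that this hypothesis genuinely applies to $a$ and the contradiction is reached.
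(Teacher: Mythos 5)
Your contrapositive argument is correct and is exactly the ``reversal'' of Corollary~\ref{C5.2} that the paper intends (the paper gives no explicit proof, merely noting that Corollary~\ref{C5.2} can be easily reversed). The choice $\eps:=q\la\de$ and the observation that the point produced by Corollary~\ref{C5.2} lies in the set quantified by the hypothesis of Corollary~\ref{C5.3} are precisely the two steps needed.
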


Now we apply Corollary~\ref{C5.3} to {\it alternating projections}, i.e. a sequence $(x_n)$ satisfying with some initial point $x_0\in X$ the following conditions:
\begin{equation}\label{ap}
x_{2n-1}\in P_B(x_{2n-2})\AND
x_{2n}\in P_A(x_{2n-1}),\quad n=1,2,\ldots,
\end{equation}
where $P_A$ and $P_B$ stand for projection operators on the respective sets, e.g., $P_A(x):=\{a\in A:\|x-a\|=d(x,A)\}$.
Note that, if $a\in P_A(x)$, then $x-a\in N_A^F(a)$.
Note also that, for a sequence $(x_n)$ satisfying \eqref{ap}, it holds $\norm{x_{n+1}-x_{n}}\le\norm{x_{n}-x_{n-1}}$, $n=1,2,\ldots$, although the last estimate does not entail in general that $\norm{x_{n+1}-x_{n}}\downarrow d(A,B)$.
%$\norm{x_{n+1}-x_{n}}\to0$ even when $A\cap B\ne\es$.
%If $A\cap B\ne\es$,
%under appropriate assumptions,
%alternating projections converge to a point in $A\cap B$.
%the intersection of $A$ and $B$ or,
%if the intersection is empty, to a pair of nearest points of the two sets.

Alternating projections (\emph{von Neumann algorithm}) are traditionally used for solving \emph{feasibility problems}, i.e. finding a point in the intersection $A\cap B$ (cf. \cite{LewLukMal09,DruIofLew15,KruLukTha18}).
If $A\cap B=\es$, the sequence obviously cannot converge.
However, alternating projections can still be applied for locating a pair of points $a\in A$ and $b\in B$ such that $\norm{a-b}=d(A,B)$.
If $x_{2n}\to a\in A$ and $x_{2n-1}\to b\in B$ as $n\to+\infty$, with some abuse of terminology, we will say that alternating projections converge to $(a,b)$.

\begin{proposition}
\label{C5.4}
Let $A$ and $B$ be closed subsets of $X$, $\de>0$ and $q>0$.
Suppose that
\begin{equation}
\label{C5.4-1}
\max\left\{\norm{b-a}^{q-2}d\left(b-a,N_A^F(a) \right),\norm{b-a}^{-1}d\left(a-b,N_B^F(b)\right) \right\}\ge\de
\end{equation}
for all $a\in A$ and $b\in B$ such that %$\min\{d(a,B),d(b,A)\}>d(A,B)$.
$d(b,A)>d(A,B)$.
Then a sequence $(x_n)$ of alternating projections satisfies
\begin{equation}
\label{C5.4-2}
\norm{x_{2n}-x_{2n-1}}^q\le \norm{x_{2n-1}-x_{2n-2}}^q-q\de^2 \norm{x_{2n-1}-x_{2n-2}},
%\quad n=1,2,\ldots
\end{equation}
as long as $d(x_{2n-1},A)>d(A,B)$.
Otherwise, $\norm{x_{2n}-x_{2n-1}}=d(A,B)$, and the distance $d(A,B)$ is attained at $x_{2n-1}\in B$ and $x_{2n}\in A$.

If  $\de<1$, then condition $a\in A$ in the assumption part can be replaced with $a\in A\setminus B$.
\end{proposition}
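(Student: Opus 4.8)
The plan is to obtain \eqref{C5.4-2} by applying the reversed distance-decrease estimate of Corollary~\ref{C5.3} to the projection step $x_{2n-1}\mapsto x_{2n}$, with a carefully chosen radius. First I would dispose of the ``Otherwise'' clause and of the case $\de>1$. If $d(x_{2n-1},A)\le d(A,B)$, then $\norm{x_{2n}-x_{2n-1}}=d(x_{2n-1},A)\le d(A,B)\le\norm{x_{2n}-x_{2n-1}}$ (the last inequality because $x_{2n}\in A$, $x_{2n-1}\in B$), so equality holds and $d(A,B)$ is attained at $(x_{2n-1},x_{2n})$. And if $\de>1$, applying \eqref{C5.4-1} with $b:=x_{2n-1}$ and $a:=x_{2n}\in P_A(x_{2n-1})$ gives $d(b-a,N_A^F(a))=0$ (since $b-a\in N_A^F(a)$) while $\norm{b-a}^{-1}d(a-b,N_B^F(b))\le1<\de$, so both entries of the maximum in \eqref{C5.4-1} are $<\de$ whenever $d(x_{2n-1},A)>d(A,B)$; hence for $\de>1$ that situation never arises and only the ``Otherwise'' conclusion is relevant. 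From now on $0<\de\le1$ and $d(x_{2n-1},A)>d(A,B)$.

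Set $b:=x_{2n-1}\in B$, $\bx:=x_{2n-2}\in A$, $R:=\norm{b-\bx}=\norm{x_{2n-1}-x_{2n-2}}$ and $\la:=\de R$. Then $R>0$ (otherwise $b=\bx\in A$, forcing $d(b,A)=0=d(A,B)$) and $b\notin A$ (since $d(b,A)>d(A,B)\ge0$). I would apply Corollary~\ref{C5.3} to the set $A$, the points $\bx,b$, the radius $\la$, the exponent $q$ and the constant $\de$; its conclusion reads $d^q(b,A)\le\norm{b-\bx}^q-q\la\de=R^q-q\de^2R$, and since $d(b,A)=d(x_{2n-1},A)=\norm{x_{2n}-x_{2n-1}}$ this is exactly \eqref{C5.4-2}.

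It remains to verify the hypothesis of Corollary~\ref{C5.3}: $\norm{b-a}^{q-2}d(b-a,N_A^F(a))\ge\de$ for every $a\in A\cap\B_\la(\bx)$ with $\norm{b-a}^q<d^q(b,A)+q\la\de$. Fix such an $a$; note $\norm{b-a}>0$ since $a\in A$ and $b\notin A$. Apply \eqref{C5.4-1} to $(a,b)$ — legitimate because $d(b,A)>d(A,B)$ — and suppose, toward a contradiction, that the first alternative fails, so that $d(a-b,N_B^F(b))\ge\de\norm{a-b}$. Here is the crux. Since $x_{2n-1}\in P_B(x_{2n-2})$, the vector $w:=\bx-b=x_{2n-2}-x_{2n-1}$ lies in the closed convex cone $N_B^F(b)$ and $\norm w=R$; hence, projecting $a-b$ onto $\{tw:t\ge0\}\subseteq N_B^F(b)$, once $\ang{a-b,w}>0$ we get $d(a-b,N_B^F(b))^2\le\norm{a-b}^2-\ang{a-b,w}^2/R^2$. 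On the other hand $a\in\B_\la(\bx)$ gives $\norm{a-\bx}^2=\norm{(a-b)-w}^2<\la^2=\de^2R^2$, i.e. $2\ang{a-b,w}>\norm{a-b}^2+(1-\de^2)R^2$, which in particular forces $\ang{a-b,w}>0$. Combining $d(a-b,N_B^F(b))\ge\de\norm{a-b}$ with the first estimate yields $\ang{a-b,w}\le\sqrt{1-\de^2}\,\norm{a-b}\,R$, and substituting this into the second yields $(\norm{a-b}-\sqrt{1-\de^2}\,R)^2<0$, which is impossible. Therefore $\norm{b-a}^{q-2}d(b-a,N_A^F(a))\ge\de$, the hypothesis of Corollary~\ref{C5.3} holds, and \eqref{C5.4-2} follows.

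For the last assertion, when $\de<1$ the open ball $\B_\la(\bx)$ is disjoint from $B$, because $d(\bx,B)=\norm{x_{2n-1}-x_{2n-2}}=R>\de R=\la$; hence every point $a$ to which \eqref{C5.4-1} was applied above lies in $A\setminus B$, so the weaker hypothesis already suffices. I expect the only genuinely delicate point to be this geometric step: the choice $\la=\de R$ is precisely what makes the cone-and-ball computation collapse into a perfect square, and with a larger radius (or with the constant $\de$ replaced by $\de^2$ together with radius $R$) the argument does not close. Everything else is routine bookkeeping around Corollary~\ref{C5.3}.
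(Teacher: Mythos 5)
Your proposal is correct and follows essentially the same route as the paper: both reduce \eqref{C5.4-2} to Corollary~\ref{C5.3} with $\bx=x_{2n-2}$, $b=x_{2n-1}$ and the same radius $\la=\de\norm{x_{2n-1}-x_{2n-2}}$, and both verify its hypothesis by showing that the $B$-term in the maximum \eqref{C5.4-1} is strictly below $\de$ for every $a\in A\cap\B_\la(x_{2n-2})$, using $x_{2n-2}-x_{2n-1}\in N_B^F(x_{2n-1})$. The only difference is in that one geometric estimate, which you establish by contradiction via the Pythagorean projection formula and a perfect-square argument (after separately disposing of $\de>1$), whereas the paper obtains $d(a-b,N_B^F(b))<\de\norm{a-b}$ directly from the symmetry identity $d\bigl(u/\norm{u},\R_+v\bigr)=d\bigl(v/\norm{v},\R_+u\bigr)$ for any $\de>0$; both computations are valid.
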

\if{
\AK{18/04/20.
Can conditions \eqref{C5.4-1} and/or \eqref{C5.4-2} hold with $q>1$?
How large can $\de$ be?

Can the convergence of (the norms in) \eqref{C5.4-2} be shown when $q\ne1$?
Is it obvious that a sequence satisfying \eqref{C5.4-2} is convergent?
If $q=1$, does the convergence rate coincide with the known one?}

\AK{20/04/20.
I am unable to prove Proposition~\ref{C5.4} if $d(b,A)>d(A,B)$ is replaced with
$\min\{d(a,B),d(b,A)\}>d(A,B)$.
It would be good to improve condition \eqref{C5.4-2} by removing the difference between odd and even iterations.
}

\AK{29/04/20.
Now condition \eqref{C5.4-1} is not symmetric, with the nonlinearity only present in the first term.
Unfortunately I have not been able to reproduce your estimates.
}
}\fi

\begin{proof}
Let $x_{2n-2}\in A$ and $x_{2n-1}\in P_B(x_{2n-2})$ for some $n\in\N$.
Suppose $d(x_{2n-1},A)>d(A,B)$.
Set $\la:=\de \norm{x_{2n-2}-x_{2n-1}}$.
Take any point $a \in A\cap \B_{\la}(x_{2n-2})$.
Since $x_{2n-1}\notin A$, we have $a\ne x_{2n-1}$ and $x_{2n-2}\ne x_{2n-1}$.
Moreover, if $\de<1$, then,
$\la< \norm{x_{2n-2}-x_{2n-1}}= d(x_{2n-2},B)$, and consequently, $a\notin B$.
A simple geometric argument involving the corresponding unit vectors shows that
\begin{align*}
d\left(\frac{a-x_{2n-1}}{\norm{a- x_{2n-1}}},\R_+(x_{2n-2} -  x_{2n-1})\right)=d\left(\frac{x_{2n-2} -  x_{2n-1}}{\norm{x_{2n-2} -  x_{2n-1}}},\R_+(a- x_{2n-1})\right).
\end{align*}
As a consequence, we have
\begin{align*}
d\left(a-x_{2n-1},N_B^F(x_{2n-1})\right)
&\le d\left(a-x_{2n-1},\R_+(x_{2n-2} -  x_{2n-1})\right) \\
&= \frac{\norm{a- x_{2n-1}}}{\norm{x_{2n-2} -  x_{2n-1}}}
d\left({x_{2n-2} -  x_{2n-1}},\R_+(a- x_{2n-1})\right) \\
&\le\frac{\norm{a- x_{2n-1}}\cdot\norm{a- x_{2n-2}}}{\norm{x_{2n-2} -  x_{2n-1}}}\\
&<\frac{\la\norm{a- x_{2n-1}}}{\norm{x_{2n-2} -  x_{2n-1}}}=\de\norm{a- x_{2n-1}}.
\end{align*}
In view of \eqref{C5.4-1}, we have
$
{\norm{x_{2n-1}-a}^{q-2}}d\left({x_{2n-1}-a}, N_A^F(a)\right) \ge\de.
$
By
%Theorem~\ref{H-T2},
{Corollary~\ref{C5.3}},
inequality \eqref{C5.4-2} holds true.
If $d(x_{2n-1},A)=d(A,B)$, then, since $x_{2n}\in P_A(x_{2n-1})$, we have $\norm{x_{2n}-x_{2n-1}}=d(A,B)$, i.e. the distance $d(A,B)$ is attained at $x_{2n-1}\in B$ and $x_{2n}\in A$.
\end{proof}

\begin{remark}
%When $A\cap B\ne\es$, we have $d(A,B)=0$.
\begin{enumerate}
\item
Under the assumptions in Proposition~\ref{C5.4}, if $d(x_{2n-1},A)>d(A,B)$ for all $n\in\N$, then alternating projections make an infinite sequence satisfying \eqref{C5.4-2}.
As a consequence, the sequence $(\norm{x_{2n}-x_{2n-1}})$ is strictly decreasing while $\norm{x_{2n}-x_{2n-1}}>d(A,B)$ for all ${n\in\N}$.
If $d(x_{2n-1},A)=d(A,B)$ for some $n\in\N$, then after $2n$ projections we arrive at a pair of points $x_{2n-1}\in B$ and $x_{2n}\in A$ such that $\norm{x_{2n}-x_{2n-1}}=d(A,B)$.
In particular, if $A\cap B\ne\es$, then $x_{2n}=x_{2n-1}\in A\cap B$.

\item
Condition \eqref{C5.4-1} implies $\de\le\max\{\norm{b-a}^{q-1},1\}$, and, in particular, $\de\le1$ when either $q=1$, or $q>1$ and $\norm{b-a}\le1$.
%If $\de>0$ and $q>1$, the above condition fails when $\norm{b-a}$ is sufficiently small.
%Hence, if $d(A,B)=0$ and there exists a sequence
%$(b_n)\subset B\setminus A$ such that $d(b_n,A)\to0$, then Proposition~\ref{C5.4} is only applicable when $q\le1$.
\end{enumerate}
\end{remark}

\begin{corollary}
\label{C5.5}
Under the assumptions of Proposition~\ref{C5.4}, either $\norm{x_{n}-x_{n-1}}\to0$ as $n\to\infty$ or there is an $n\in\N$ such that $\norm{x_{n}-x_{n-1}}=d(A,B)$.
\end{corollary}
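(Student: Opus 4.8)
The plan is to argue by a simple dichotomy. Write $t_n:=\norm{x_n-x_{n-1}}$. If $t_n=d(A,B)$ for some $n\in\N$, the second alternative in the statement holds and there is nothing to prove, so I will assume from now on that $t_n\ne d(A,B)$ for \emph{every} $n\in\N$ and show that then $t_n\to0$ (which in passing forces $d(A,B)=0$, since $x_{2n-1}\in B$ and $x_{2n},x_{2n-2}\in A$ give $t_n\ge d(A,B)$ for all $n\ge2$).

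First I would check that under this assumption the ``good'' branch of Proposition~\ref{C5.4} applies at every even step, i.e. $d(x_{2n-1},A)>d(A,B)$ for all $n\in\N$. Since $x_{2n-1}\in B$, one always has $d(x_{2n-1},A)\ge d(A,B)$; and if equality held for some $n$, then, because $x_{2n}\in P_A(x_{2n-1})$, we would get $t_{2n}=\norm{x_{2n}-x_{2n-1}}=d(x_{2n-1},A)=d(A,B)$, contradicting the standing assumption. Hence Proposition~\ref{C5.4} gives the decrease inequality \eqref{C5.4-2}, namely $t_{2n}^q\le t_{2n-1}^q-q\de^2 t_{2n-1}$, for every $n\in\N$.

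The next step is a telescoping argument. For $n\ge1$ we have $x_{2n+1}\in P_B(x_{2n})$ and $x_{2n-1}\in B$, so $t_{2n+1}=d(x_{2n},B)\le\norm{x_{2n}-x_{2n-1}}=t_{2n}$; combining this with \eqref{C5.4-2} yields $t_{2n+1}^q\le t_{2n-1}^q-q\de^2 t_{2n-1}$ for all $n\in\N$. Summing from $n=1$ to $N$ and discarding the nonnegative term $t_{2N+1}^q$ gives $q\de^2\sum_{n=1}^{N}t_{2n-1}\le t_1^q$, and $t_1<+\infty$; letting $N\to\infty$ we obtain $\sum_{n=1}^{\infty}t_{2n-1}<+\infty$, hence $t_{2n-1}\to0$. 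Finally, \eqref{C5.4-2} also gives $t_{2n}^q\le t_{2n-1}^q$, so $t_{2n}\le t_{2n-1}\to0$, and therefore $t_n\to0$, which is the first alternative.

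I do not expect a genuine obstacle here; the only point that needs attention is the asymmetry of Proposition~\ref{C5.4} (the $q$-nonlinearity sits on the projection step onto $A$), which is why the decrease estimate is available only for even indices and why the elementary one-step monotonicity $t_{2n+1}\le t_{2n}$ has to be invoked to bridge the odd-to-even gap before the series argument can be run.
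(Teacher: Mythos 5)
Your proof is correct and follows essentially the same route as the paper's: both hinge on telescoping the decrease inequality \eqref{C5.4-2} across the even steps, bridged by the elementary monotonicity $\norm{x_{2n+1}-x_{2n}}\le\norm{x_{2n}-x_{2n-1}}$. The only cosmetic difference is that you sum the series $\sum t_{2n-1}$ and conclude summability, whereas the paper assumes a uniform lower bound $\al>0$ and derives a contradiction from $t_{2n+1}^q<t_1^q-nq\de^2\al$; your explicit check that $d(x_{2n-1},A)>d(A,B)$ holds under the standing assumption is a welcome (if minor) tightening of a step the paper leaves implicit.
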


\begin{proof}
Suppose that $\norm{x_{n}-x_{n-1}}>d(A,B)$ for all $n\in\N$.
By Proposition~\ref{C5.4}, condition \eqref{C5.4-2} holds for all $n\in\N$.
If $\norm{x_{n}-x_{n-1}}>\al>0$ for all $n\in\N$, then, in view of \eqref{C5.4-2}, for all $n\in\N$, we have
$\norm{x_{2n+1}-x_{2n}}^q\le \norm{x_{2n}-x_{2n-1}}^q< \norm{x_{2n-1}-x_{2n-2}}^q-q\de^2\al$,
and consequently,
$\norm{x_{2n+1}-x_{2n}}^q< \norm{x_{1}-x_{0}}^q-nq\de^2\al$, which is not possible.
\end{proof}

The next statement provides the complete list of convergence estimates for alternating projections in the H\"older setting.

\begin{corollary}
\label{C5.6}
Suppose the assumptions of Proposition~\ref{C5.4} are satisfied.
\begin{enumerate}
\item
If either $d(A,B)>0$ or $q>1$, then there is an $n\in\N$ such that $\norm{x_{n}-x_{n-1}}=d(A,B)$.
\item
If $d(A,B)=0$, then $\norm{x_{n}-x_{n-1}}\to0$ as $n\to\infty$.
Moreover,
\begin{enumerate}
\item if
$q>1$, then there is an $n\in\N$ such that $x_{n}\in A\cap B$;
\item if
$q=1$, then for all $n\ge 1$, we have
\begin{gather}\label{C5.6-1}
	\norm{x_{2n}-x_{2n-1}}\le (1-\de^2)\norm{x_{2n-1}-x_{2n-2}};
\end{gather}
if $\dim X<\infty$, then $A\cap B\ne\es$, and the sequence converges to a point in $A\cap B$.
\end{enumerate}
%\begin{equation*}
%\lim_{n\to+\infty}\frac{\norm{x_{2n}-x_{2n-1}}} %{\norm{x_{2n-1}-x_{2n-2}}^p}=0.
%\end{equation*}
\end{enumerate}
\end{corollary}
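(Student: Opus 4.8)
The plan is to reduce everything to the scalar sequence $t_n:=\norm{x_n-x_{n-1}}$ and to feed it into Proposition~\ref{C5.4} and Corollary~\ref{C5.5}. The starting observations are that $(t_n)$ is non-increasing with $t_n\ge d(A,B)$ for all $n$, and that $x_{2n}\in P_A(x_{2n-1})$ gives $d(x_{2n-1},A)=t_{2n}$; hence Proposition~\ref{C5.4} says exactly: for each $n$, either $t_{2n}>d(A,B)$ and then $t_{2n}^q\le t_{2n-1}^q-q\de^2 t_{2n-1}$, or $t_{2n}=d(A,B)$.

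For part~(i), if $d(A,B)>0$ then $t_n\ge d(A,B)>0$ for every $n$, so the alternative $t_n\to0$ in Corollary~\ref{C5.5} cannot occur and the other one gives an $n$ with $t_n=d(A,B)$. It then remains to treat $q>1$ with $d(A,B)=0$, i.e. to exclude $t_n>0$ for all $n$. Assuming $t_n>0$ for all $n$, Corollary~\ref{C5.5} forces $t_n\to0$, so I can choose $N$ with $t_{2N-1}^{q-1}<q\de^2$; since $t_{2N}>0=d(A,B)$, Proposition~\ref{C5.4} yields $0<t_{2N}^q\le t_{2N-1}^q-q\de^2 t_{2N-1}=t_{2N-1}\bigl(t_{2N-1}^{q-1}-q\de^2\bigr)<0$, a contradiction; hence $t_n=d(A,B)$ for some $n$.

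For part~(ii), let $d(A,B)=0$. Corollary~\ref{C5.5} gives $t_n\to0$ or $t_n=0$ for some $n$; in the latter case monotonicity makes $t_m=0$ for all $m\ge n$, so in both cases $\norm{x_n-x_{n-1}}\to0$. If $q>1$, part~(i) produces $n$ with $t_n=0$, i.e. $x_n=x_{n-1}$, a point lying in both $A$ and $B$ since consecutive iterates alternate between the two sets, so $x_n\in A\cap B$. If $q=1$, I first check that \eqref{C5.6-1} holds for every $n\ge1$: when $t_{2n}>0$ it is the Proposition~\ref{C5.4} recursion specialized to $q=1$, and when $t_{2n}=0$ it is trivial because $1-\de^2\ge0$ (here $\de\le1$ since $q=1$, by the remark following Proposition~\ref{C5.4}). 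Combining \eqref{C5.6-1} with $t_{2n-1}\le t_{2n-2}$ gives $t_{2n}\le(1-\de^2)\,t_{2n-2}$, whence $t_{2n}\le(1-\de^2)^{n-1}t_2$ and $t_{2n+1}\le(1-\de^2)^{n-1}t_2$; since $\de>0$ the increments $t_n$ are summable, so $(x_n)$ is Cauchy in the complete space $X$ and converges to some $\bx$, and closedness of $A$ and $B$ together with $x_{2n}\in A$, $x_{2n-1}\in B$ forces $\bx\in A\cap B\ne\es$.

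The argument is mostly bookkeeping; the one place that genuinely needs attention is the split between $q>1$ and $q=1$. For $q>1$ the point is that the right-hand side $t_{2N-1}\bigl(t_{2N-1}^{q-1}-q\de^2\bigr)$ of the Proposition~\ref{C5.4} recursion turns negative as soon as $t_{2N-1}<(q\de^2)^{1/(q-1)}$, which is incompatible with $t_{2N}^q\ge0$ and therefore forces the alternating projections to hit the intersection in finitely many steps; for $q=1$ the same recursion only delivers the linear rate \eqref{C5.6-1}, and the work lies in converting it into summability of the increments and then invoking completeness and closedness. A minor caveat is the first index: the initial point $x_0$ need not lie in $A$, but this is harmless --- if $t_1=0$ then $x_0=x_1\in B$ and the argument is simply run from $x_1$ on.
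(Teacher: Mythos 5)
Your proof is correct. Parts (i), the preamble of (ii), and (ii)(a) follow essentially the same route as the paper: Corollary~\ref{C5.5} handles the dichotomy, and for $q>1$ the recursion \eqref{C5.4-2} is rewritten as $\norm{x_{2n}-x_{2n-1}}^q\le \norm{x_{2n-1}-x_{2n-2}}^q\bigl(1-q\de^2\norm{x_{2n-1}-x_{2n-2}}^{1-q}\bigr)$, whose right-hand side goes negative once the increments are small, forcing finite termination --- your choice of $N$ with $t_{2N-1}^{q-1}<q\de^2$ is exactly this. Where you genuinely diverge is the final claim of (ii)(b): the paper only observes that \eqref{C5.6-1} makes the sequence bounded and then appeals to $\dim X<\infty$ to extract convergence, whereas you iterate \eqref{C5.6-1} together with the monotonicity $\norm{x_{2n-1}-x_{2n-2}}\le\norm{x_{2n-2}-x_{2n-3}}$ to obtain geometric decay of the increments, hence summability, hence that $(x_n)$ is Cauchy in the complete Hilbert space $X$ and converges to a point lying in $A\cap B$ by closedness. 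This is more explicit than the paper's one-line justification (which, as stated, would only yield a convergent subsequence) and shows in passing that the hypothesis $\dim X<\infty$ is not actually needed for that conclusion. Your handling of the degenerate branches --- the case $t_{2n}=d(A,B)$ where \eqref{C5.4-2} is replaced by the ``otherwise'' alternative of Proposition~\ref{C5.4}, the bound $\de\le1$ for $q=1$, and the first-index caveat about $x_0$ --- is careful and consistent with the paper's conventions.
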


\begin{proof}
\begin{enumerate}
\item
If $d(A,B)>0$, then $\norm{x_{n}-x_{n-1}}\ge d(A,B)>0$, and the conclusion follows immediately from Corollary~\ref{C5.5}.
Let $d(A,B)=0$ and $q>1$.
Suppose that ${\norm{x_{n}-x_{n-1}}>0}$ for all $n\in\N$.
By Proposition~\ref{C5.4} and Corollary~\ref{C5.5}, condition \eqref{C5.4-2} holds for all $n\in\N$, and $\norm{x_{n}-x_{n-1}}\to0$ as $n\to\infty$.
In view of \eqref{C5.4-2}, for all $n\in\N$, we have
\begin{equation*}
\norm{x_{2n}-x_{2n-1}}^q\le \norm{x_{2n-1}-x_{2n-2}}^q\left(1-q\de^2 \norm{x_{2n-1}-x_{2n-2}}^{1-q}\right),
\end{equation*}
which is not possible as the \RHS\ of the last inequality becomes negative when $n$ is large enough.
\item
Let $d(A,B)=0$.
By Corollary~\ref{C5.5}, $\norm{x_{n}-x_{n-1}}\to0$ as $n\to\infty$.
Assertion (a) is a consequence of (i).
When $q=1$, condition \eqref{C5.4-2} reduces to \eqref{C5.6-1}.
The latter condition implies that the sequence $(x_n)$ is bounded.
If $\dim X<\infty$, it converges to a point in $A\cap B$.
\end{enumerate}
\end{proof}

\begin{remark}
Condition \eqref{C5.6-1} recaptures the $R-$linear convergence estimate in \cite[Theorem~6.1]{DruIofLew15} and \cite[Theorem~9.28]{Iof17}.
Note that in \cite{DruIofLew15,Iof17} a local setting is considered and the $R-$linear convergence (when $q=1$) of alternating projections is established near a given point $\bx\in A\cap B$.
This setting is covered by Corollary~\ref{C5.6}(iii) if the sets $A$ and $B$ are replaced by their intersections with some fixed \nbh\ of $\bx$, and the initial point $x_0$ is chosen sufficiently close to $\bx$.
\if{\AK{29/04/20.
\cite[Theorem~6.1]{DruIofLew15} claims a slightly weaker estimate.
To be checked.}}\fi
\end{remark}

\begin{example}\label{E5.5}
Consider a sequence of alternating projections for the pair of closed sets
$A:=\{(u,v): v\le 0\}$ and $B:=\{(u,v): v\ge |u|+1\}$ in the Euclidean space $\R^2$; see Fig.~\ref{Fig1}.
\begin{figure}[!ht]
			\centering
			\includegraphics[height=5cm,width=5.2cm]{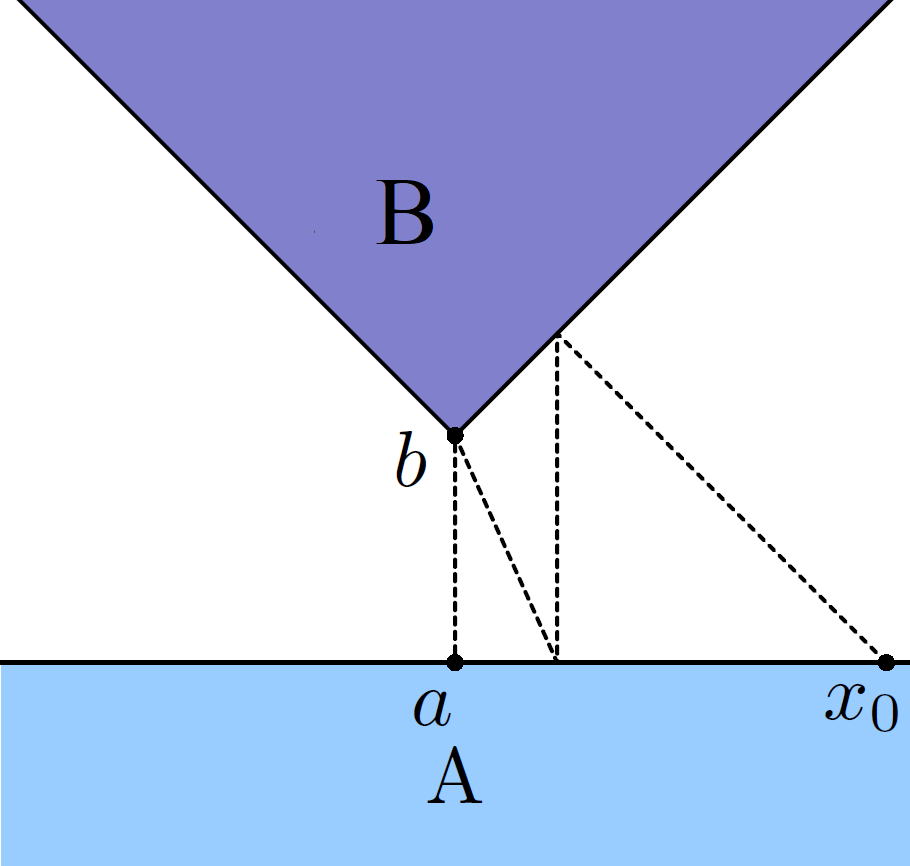}
			\caption{Example~\ref{E5.5}}
\label{Fig1}
	\end{figure}
Obviously, $d(A,B)=1$.
The setting verifies condition \eqref{C5.4-1} with $q=1$ except for the pair $(a,b)$ on which the distance $d(A,B)$ is attained.
In accordance with Corollary~\ref{C5.6}(i), the alternating projections arrive to this pair after a finite number of steps.
\end{example}

\section*{Acknowledgement(s)}

We would like to thank
PhD student Nguyen Duy Cuong from Federation University Australia for his careful reading of the manuscript.
We are also grateful to the referees for their constructive comments and criticism.

\section*{Disclosure statement}

No potential conflict of interest was reported by the authors.

\section*{Funding}

The research was supported by the Australian Research Council, project DP160100854.
The first author is supported by the Australian Research Council through the Centre for Transforming Maintenance through Data Science, project IC180100030.
The second author
benefited from the support of the European Union's Horizon 2020
research and innovation programme under the Marie Sk{\l}odowska--Curie
Grant Agreement No. 823731 CONMECH,
 and Conicyt REDES program 180032.

\addcontentsline{toc}{section}{References}
\def\cprime{$'$} \def\cftil#1{\ifmmode\setbox7\hbox{$\accent"5E#1$}\else
  \setbox7\hbox{\accent"5E#1}\penalty 10000\relax\fi\raise 1\ht7
  \hbox{\lower1.15ex\hbox to 1\wd7{\hss\accent"7E\hss}}\penalty 10000
  \hskip-1\wd7\penalty 10000\box7} \def\cprime{$'$} \def\cprime{$'$}
  \def\cprime{$'$} \def\cprime{$'$} \def\cprime{$'$}
  \def\Dbar{\leavevmode\lower.6ex\hbox to 0pt{\hskip-.23ex \accent"16\hss}D}
  \def\cfac#1{\ifmmode\setbox7\hbox{$\accent"5E#1$}\else
  \setbox7\hbox{\accent"5E#1}\penalty 10000\relax\fi\raise 1\ht7
  \hbox{\lower1.15ex\hbox to 1\wd7{\hss\accent"13\hss}}\penalty 10000
  \hskip-1\wd7\penalty 10000\box7} \def\cprime{$'$}

%\bibliographystyle{plain}
%\bibliographystyle{spmpsci_unsrt}
%\bibliographystyle{spmpsci}
%\bibliographystyle{tfnlm}
%\bibliography{BUCH-kr,Kruger,KR-tmp}
\end{document}